\numberwithin{equation}{section}
\def\proof{\smallskip\noindent {\it Proof: \ }}
\def\endproof{\hfill$\square$\medskip}
\newtheorem{theorem}{Theorem}[section]
\newtheorem{corollary}[theorem]{Corollary}
\newtheorem{conjecture}[theorem]{Conjecture}
\newtheorem{lemma}[theorem]{Lemma}
\theoremstyle{definition}
\newtheorem{definition}[theorem]{Definition}
\newtheorem{remark}[theorem]{Remark}
\DeclareMathOperator{\Tot}{\mathrm{Tot}}
\DeclareMathOperator{\skel}{Skel}
\DeclareMathOperator\lk{\mathrm{lk}}
\DeclareMathOperator\st{\mathrm{st}}
\DeclareMathOperator{\Span}{\mathrm{span}}
\DeclareMathOperator{\conv}{\mathrm{conv}}
\DeclareMathOperator{\supp}{\mathrm{supp}}
\DeclareMathOperator{\sign}{\mathrm{sign}}
\newcommand{\field}{{\bf k}}
\newcommand{\R}{{\mathbb R}}
\newcommand{\Q}{{\mathbb Q}}
\newcommand{\Z}{{\mathbb Z}}
\newcommand{\G}{{\mathcal G}}
\newcommand{\I}{{\mathcal I}}
\newcommand{\C}{{\mathcal C}}
\newcommand{\M}{{\mathcal M}}
\newcommand{\V}{{\mathcal V}}
\newcommand{\Stress}{\mathcal S}
\title{Affine stresses: the partition of unity and Kalai's reconstruction conjectures}
\author{
	Isabella Novik\thanks{Research of IN is partially\textsl{} supported by NSF grant DMS-1953815 and by Robert R.~\&  Elaine F.~Phelps Professorship in Mathematics. }\\
	\small Department of Mathematics\\[-0.8ex]
	\small University of Washington\\[-0.8ex]
	\small Seattle, WA 98195-4350, USA\\[-0.8ex]
	\small \texttt{novik@uw.edu}
	\and 
	Hailun Zheng\thanks{Research of HZ is partially supported by a postdoctoral fellowship from ERC grant 716424 - CASe.}\\
		\small Department of Mathematics \& Statistics\\[-0.8ex]
	\small University of Houston-Downtown\\[-0.8ex]
	\small One Main Street, Houston, TX 77002, USA \\[-0.8ex]
	\small \texttt{zhengh@uhd.edu}
}
\begin{document}
\maketitle
\begin{abstract}
Kalai conjectured that if $P$ is a simplicial $d$-polytope that has no missing faces of dimension $d-1$, then the graph of $P$ and the space of affine $2$-stresses of $P$ determine $P$ up to affine equivalence. We propose a higher-dimensional generalization of this conjecture: if $2\leq i\leq d/2$ and $P$ is a simplicial $d$-polytope that has no missing faces of dimension $\geq d-i+1$, then the space of affine $i$-stresses of $P$ determines the space of affine $1$-stresses of $P$. We prove this conjecture for (1) $k$-stacked $d$-polytopes with $2\leq i\leq k\leq d/2-1$, (2) $d$-polytopes that have no missing faces of dimension $\geq d-2i+2$, and (3) flag PL $(d-1)$-spheres with generic embeddings (for all $2\leq i\leq d/2$). We also discuss several related results and conjectures. For instance, we show that if $P$ is a simplicial $d$-polytope that has no missing faces of dimension $\geq d-2i+2$, then the $(i-1)$-skeleton of $P$ and the set of sign vectors of affine $i$-stresses of $P$ determine the combinatorial type of $P$. Along the way, we establish the partition of unity of affine stresses: for any $1\leq i\leq (d-1)/2$, the space of affine $i$-stresses of a simplicial $d$-polytope as well as the space of affine $i$-stresses of a  simplicial $(d-1)$-sphere (with a generic embedding) can be expressed as the sum of affine $i$-stress spaces of vertex stars. This is analogous to Adiprasito's partition of unity of linear stresses for Cohen--Macaulay complexes.
\end{abstract}

{\small \noindent{\bf MSC codes:} 05E45, 13F55, 52B05, 52C25\\
\smallskip\noindent{\bf Keywords:} affine stresses, rigidity theory, simplicial polytopes, flag spheres, missing faces, partition of unity}

	\section{Introduction}
	\subsection{Partition of unity}
	One of the central problems in the theory of face numbers of simplicial complexes is how the information about the local structure of a complex (i.e., properties of the links, or equivalently of the stars) can be used to provide the information about the entire complex. Results of this nature include, among others, McMullen's integral formula that expresses the
	(sums of the) $h$-numbers of a pure simplicial complex in terms of the $h$-numbers of vertex
	links \cite{McMullen70, Swartz2006}, Kalai's observation that a simplicial sphere
	of dimension at least four is stacked if and only if all of its links are stacked \cite{Kalai87}, and Bagchi and Datta's
	$\mu$- and $\sigma$-numbers and their applications \cite{BD14, Murai-15}. The most recent major development on this front is Adiprasito's {\em partition of unity}
	\cite{Adiprasito-g-conjecture, AdiprasitoYashfe} that allows us to express linear stress spaces
	of a Cohen--Macaulay complex (w.r.t.~certain embeddings) as the sums of linear
	stress spaces of vertex stars. This is a fundamental result that has already
	served as an ingredient in several exciting recent breakthroughs, see, for instance, \cite{Adiprasito-toric, Adiprasito-g-conjecture, AdiprasitoYashfe}.
	
	The first goal of this paper is to establish several partition-of-unity-type results for {\em affine} stresses. We defer all definitions to the following sections and for now merely mention that linear stress spaces of a simplicial complex $\Delta$ can be thought of as Weil dual of (the graded components of) an artinian reduction of the Stanley--Reisner ring of $\Delta$. Similarly, affine stress spaces are Weil dual of an artinian reduction modded out by one additional linear form: typically, the sum of the variables. 
	
	Assume $\Delta$ is a $(d-1)$-dimensional simplicial complex (e.g., a simplicial sphere) with vertex set $V$. Specifying an artinian reduction of the Stanley--Reisner ring of $\Delta$ is equivalent to choosing a {\em $d$-embedding} of $\Delta$ --- a map $p$ from $V$ to $\R^d$. There are two most common types of embeddings used in the literature. If $\Delta$ is the boundary complex of a convex polytope $P$, then one can take $p$ to be the natural embedding given by the position vectors of vertices of $P$. The second model is to consider embeddings satisfying certain genericity assumptions. The mildest assumption is to require that the images of vertices of every facet are linearly or affinely independent. This, however, is insufficient in many settings, especially those related to Lefschetz properties. So, instead, one considers a (very) generic embedding, namely, any map such that the multiset of coordinates of vertices is algebraically independent over $\Q$. Both models are extensively used in geometric combinatorics. For instance, both models play a prominent role in the celebrated $g$-theorem  --- the theorem that characterizes $f$-vectors of simplicial spheres. This theorem was first proved for the case of simplicial polytopes by Billera and Lee \cite{BilleraLee} (sufficiency) and Stanley \cite{Stanley80} (necessity); additional proofs of necessity were found by McMullen \cite{McMullen93, McMullen96} and then by Fleming and Karu \cite{Flemingkaru}. A recent breakthrough by Adiprasito \cite{Adiprasito-g-conjecture}, Papadakis and Petrotou \cite{PapadakisPetrotou}, Adiprasito, Papadakis, and Petrotou \cite{AdiprasitoPapadakisPetrotou}, and Karu and Xiao \cite{KaruXiao} settles the case of spheres; all available proofs of this case use generic embeddings.
	
	In this paper, we establish the partition of unity of affine stresses in the following two cases:
	\begin{itemize}
	\item The spaces of affine $1$-stresses of strongly connected simplicial complexes (of dimension $\geq 2$) w.r.t.~embeddings satisfying the property that the images of vertices of any two adjacent facets are affinely independent; see Theorem \ref{lm: decomposition of affine dependency}. This includes the class of normal pseudomanifolds with generic embeddings.
	\item 	The spaces of affine $i$-stresses of simplicial $d$-polytopes with natural embeddings and simplicial $(d-1)$-spheres with generic embeddings (for all $1\leq i\leq (d-1)/2$); see Theorem \ref{lm: general partition of unity, PL} for the case of PL spheres and Theorem \ref{thm: general partition of unity} in the Appendix for all other cases.
	\end{itemize}
	\noindent The proof of the second result is based on the $g$-theorem for polytopes and spheres. 
Specifically, we rely on the fact that certain  Artinian reductions of the Stanley--Reisner rings (over $\R$) of simplicial polytopes and spheres satisfy the hard Lefschetz property; see the end of Section 3.1 for precise statements and references.

	\subsection{Affine stresses and Kalai's conjectures}
We will now describe our main results related to affine stresses and Kalai's reconstruction conjectures. Let $\Delta$ be a simplicial complex with vertex set $V$ and consider the polynomial ring $\R[x_v: v\in V]$ whose variables correspond to vertices of $\Delta$. While we defer most of definitions to later sections, we should mention that spaces of linear and affine stresses of $\Delta$ w.r.t.~a $d$-embedding $p$ are certain homogeneous subspaces of $\R[x_v: v\in V]$.
Specifically, the space of affine $1$-stresses of $\Delta$ consists of linear forms $\sum_{v\in V} a_vx_v$ whose coefficients  $(a_v : v\in V)$ form affine dependencies of the $p$-images of vertices of $\Delta$. This means that when $\Delta$ is the boundary complex of a simplicial $d$-polytope $P$ with its natural embedding, the space of affine $1$-stresses of $\Delta$ contains the same information as the Gale diagram of $P$. Consequently, one may think of the spaces of affine $i$-stresses of polytopes as higher-dimensional analogs of Gale diagrams.

Affine $2$-stresses take their origin in the theory of framework rigidity of graphs. They were extensively used by Kalai \cite{Kalai87} to provide an alternative  proof of the celebrated Lower Bound Theorem of Barnette \cite{Barnette73,Barnette-LBT-pseudomanifolds} as well as to characterize the minimizers. The theory of higher linear and affine stresses was developed in the works of Lee \cite{Lee96} and Tay, White, and Whiteley \cite{Tay-et-al-I,Tay-et-al}. Their main motivation for initiating this theory was the hope of using higher stresses to resolve the $g$-conjecture. This dream was finally realized by Adiprasito in \cite{Adiprasito-g-conjecture}. Other recent applications of linear and affine stresses to the $f$-vector theory and especially to the lower-bound-type questions include \cite{Adiprasito-toric, ANS, KNNZ, NZ-cs-stresses}, to name just a few; see also the results on the $g$-vectors of flag PL spheres in this paper. We encourage the reader to think of further potential applications of spaces of affine stresses. Are there applications similar in spirit to those of usual Gale diagrams?

Motivated by the connection between affine $1$-stresses and Gale diagrams, Kalai proposed the following reconstruction conjectures involving higher affine stresses. Recall that  two simplicial $d$-polytopes $P$ and $Q$ have the same combinatorial type if their boundary complexes are isomorphic; furthermore, $P, Q \subset \R^d$ have the same affine type if there is an invertible affine transformation $\R^d \to \R^d$ that maps $P$ to $Q$.

	\begin{conjecture} \label{conj:comb-type}
	Let $d\geq 4$, $2\leq i\leq d/2$, and let $P$ be a simplicial $d$-polytope. Then the $(i-1)$-skeleton of $P$ (as an abstract simplicial complex) and the space of affine $i$-stresses of $P$ determine the combinatorial type of $P$.
	\end{conjecture}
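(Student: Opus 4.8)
The plan is to recast the statement as a linear-algebra recovery problem for the Artinian-reducing forms and then finish with Gale duality. Write $V$ for the vertex set, $R=\R[x_v:v\in V]$, let $p\colon V\to\R^d$ be the natural embedding of $P$, put $\theta_j=\sum_v p_j(v)x_v$ and $\omega=\sum_v x_v$, and set $W=\langle\theta_1,\dots,\theta_d,\omega\rangle\subseteq R_1$. Since an affine transformation of $\R^d$ replaces each $\theta_j$ by an element of $W$, the $(d+1)$-dimensional subspace $W$ is an affine invariant of $P$; and under the apolarity pairing on $R_1$ one has $\Stress_1(P)=W^{\perp}$, a subspace of dimension $|V|-d-1$ that is a Gale diagram of $P$ up to linear equivalence. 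The combinatorial type of $P$ is read off from the oriented matroid of this Gale diagram ($F\subsetneq V$ is a face of $P$ iff $0\in\relint\conv\{\bar v:v\in V\setminus F\}$ for a Gale transform $v\mapsto\bar v$), so it suffices to reconstruct $W$, equivalently $\Stress_1(P)$, from the pair $\bigl(\skel_{i-1}(P),\Stress_i(P)\bigr)$.

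Next I would extract from the given data the multiplication map that will do the work. By apolarity, $\Stress_i(P)^{\perp}=(I_{\partial P})_i+W\cdot R_{i-1}$, where $(I_{\partial P})_i$ is the degree-$i$ component of the Stanley--Reisner ideal; this component depends only on the non-faces of $P$ of size $\le i$, hence only on $\skel_{i-1}(P)$. For the same reason the truncated quotient $A_\bullet=R_\bullet/(I_{\partial P})_\bullet$, considered only through degree $i$, coincides with $\R[\skel_{i-1}(P)]$ and is thus known; note $A_1=R_1$ since every vertex is a face. Therefore the image of the known subspace $\Stress_i(P)^{\perp}$ in $A_i=R_i/(I_{\partial P})_i$ is exactly the image $W\cdot A_{i-1}$ of the multiplication map $W\otimes A_{i-1}\to A_i$. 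So the whole problem reduces to this: from the ring $A_\bullet$ (through degree $i$) and from the subspace $W\cdot A_{i-1}\subseteq A_i$, reconstruct the $(d+1)$-dimensional subspace $W\subseteq A_1$.

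The natural attempt is the colon construction: set $W'=\{f\in A_1: f\cdot A_{i-1}\subseteq W\cdot A_{i-1}\}$, which obviously contains $W$, and which equals $W$ provided no $(d+2)$-dimensional subspace $W''$ with $W\subseteq W''\subseteq A_1$ satisfies $W''\cdot A_{i-1}=W\cdot A_{i-1}$. This last condition holds once the multiplication map $W''\otimes A_{i-1}\to A_i$ has maximal rank $(d+2)\dim_{\R}A_{i-1}$ for every such $W''$ --- a Lefschetz-type maximal-rank property for several linear forms acting from degree $i-1$ to degree $i$ of $\R[\partial P]$ (valid only for polytopes with enough vertices; those with few vertices have so few missing faces that $\skel_{i-1}(P)$ essentially determines $P$ on its own and can be dispatched separately). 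Since $i\le d/2$, the ordinary hard Lefschetz theorem for the natural Artinian reduction of $\R[\partial P]$ (the polytope $g$-theorem, which we may invoke as recalled at the end of Section~3.1) gives injectivity of multiplication by a single sufficiently generic linear form $A_{i-1}\to A_i$; but the multi-form maximal-rank statement does not follow formally from this, and proving it for the natural embedding of an arbitrary simplicial $d$-polytope is, I expect, precisely the missing ingredient --- this is the main obstacle.

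The way to make partial progress, and the route one can push through in special cases, is to propagate maximal-rank statements by induction on $d$ using the partition of unity of affine stresses proved above (Theorems~\ref{lm: general partition of unity, PL} and~\ref{thm: general partition of unity}): the identity $\Stress_i(P)=\sum_{v\in V}\Stress_i(\st_P v)$ breaks the global stress space into vertex-star pieces, and the cone decomposition $\st_P v=v\ast\lk_P v$ together with the inductive hypothesis applied to the polytopal $(d-2)$-sphere $\lk_P v$ constrains each piece. This completes the argument in the three settings treated here --- $k$-stacked polytopes (where the stacking structure lets one read the missing faces directly off $\skel_{i-1}(P)$), polytopes with no missing faces of dimension $\ge d-2i+2$ (where one argues instead with the oriented matroid and sign vectors of $\Stress_i(P)$ and detects each large missing face locally through links), and flag PL $(d-1)$-spheres with generic embeddings (where genericity supplies the maximal-rank property and flagness controls the combinatorics) --- but for the conjecture in full one is still left needing the Lefschetz-type statement for several generic forms on the low-degree part of $\R[\partial P]$.
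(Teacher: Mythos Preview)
First, note that Conjecture~\ref{conj:comb-type} is stated in the paper as an \emph{open} conjecture; the paper does not prove it in general. The only case settled here is Theorem~\ref{thm:comb-type} (polytopes with no missing faces of dimension $\geq d-2i+2$), and its proof goes by an entirely different route from yours: it works with sign vectors (Conjecture~\ref{conj: sign vectors}) and, for each large missing face $M$ and each $(i-1)$-subset $F\subset M$, lifts an affine $1$-stress on the quotient $P/F$ to an affine $i$-stress on $P$ whose sign pattern on faces of the form $F\cup v$ separates $M\setminus F$ from the rest (Theorem~\ref{thm: sign vectors under lifting} together with Lemma~\ref{lm:signs}).

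Your setup --- recovering $(I_{\partial P})_i$ from $\skel_{i-1}(P)$ and hence isolating the image $W\cdot A_{i-1}\subseteq A_i$ --- is correct and attractive. The real gap is the reduction itself: you replace ``determine the combinatorial type'' by the strictly stronger ``reconstruct $W$'' (equivalently, recover the affine type, equivalently $\Stress^a_1(P)$), and then pursue the colon construction. But that stronger target is false without the missing-face hypotheses of Conjecture~\ref{conj: generalization of Kalai's conjecture}. If $P$ is $(i-1)$-stacked then $g_i(P)=0$, so $(\R[\partial P]/(\Theta))_i=0$, hence $W\cdot A_{i-1}=A_i$ and your colon ideal returns $W'=A_1$, not $W$; indeed one can perturb the vertices of such $P$ to change the affine type while keeping both $\skel_{i-1}(P)$ and $\Stress^a_i(P)=0$ fixed (see the paragraph after Conjecture~\ref{Conj: intro}). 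Such polytopes exist with arbitrarily many vertices and many missing $(d-1)$-faces, so this is not a ``few vertices, few missing faces'' boundary case --- it is an entire regime where your reduction collapses while Conjecture~\ref{conj:comb-type} still demands that $\skel_{i-1}(P)$ together with the trivial datum $\Stress^a_i(P)=0$ determine $\partial P$. Any approach to the full conjecture must therefore stay at the level of combinatorial type; the sign-vector/oriented-matroid mechanism you mention only in passing is precisely what the paper's partial result uses.
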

	\begin{conjecture} \label{conj: generalization of Kalai's conjecture}
	Let $d\geq 4$, let $2\leq i\leq d/2$, and let $P$ be a simplicial $d$-polytope. 
	\begin{enumerate}
	\item If $P$ has no missing $(d-1)$-faces, then the graph of $P$ and the space of affine $2$-stresses of $P$ determine $P$ up to affine equivalence. 
	\item More generally, if $P$ has no missing faces of dimension $\geq d-i+1$, then the space of affine $i$-stresses of $P$ determines $P$ up to affine equivalence.
	\end{enumerate}
	\end{conjecture}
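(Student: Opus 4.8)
The plan is to reduce Conjecture~\ref{conj: generalization of Kalai's conjecture} to the single assertion --- the one in the abstract --- that the space of affine $i$-stresses of $P$ determines its space of affine $1$-stresses, and then to attack the latter via the partition of unity. The reduction uses no hypothesis on missing faces: identifying $\R[x_v:v\in V]_1$ with $\R^V$ and letting $\phi\colon\R^V\to\R^{d+1}$ be the linear map $e_v\mapsto(p(v),1)$, one has $\ker\phi=\Stress_1(P,p)$, so the subspace $\Stress_1(P,p)\subseteq\R^V$ determines the configuration $(p(v))_{v\in V}$ up to replacing $\phi$ by $A\circ\phi$ for an invertible $A\colon\R^{d+1}\to\R^{d+1}$; since the vectors $(p(v),1)$ affinely span $\R^{d+1}$, any such $A$ carries the hyperplane $\{x_{d+1}=1\}$ to itself and hence restricts to an affine automorphism of $\R^d$. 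Thus $\Stress_1(P)$ alone determines $P$ up to affine equivalence, and part~(1) of the conjecture is essentially the case $i=2$, with the given graph becoming redundant. So it remains to recover $\Stress_1(P)$ from $\Stress_i(P)$.

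For this I would induct on the dimension $d$, using the partition of unity for affine stresses (Theorems~\ref{lm: general partition of unity, PL} and~\ref{thm: general partition of unity}), which gives $\Stress_i(P,p)=\sum_{v\in V}\Stress_i(\st_P(v),p)$ and so localizes the reconstruction to the vertex stars. Two ingredients must be supplied. The first is to extract the individual summands $\Stress_i(\st_P(v))$ from this sum: a stress polynomial is supported on monomials whose supports are faces of $P$, so the supports that occur recover the $(i-1)$-skeleton of $P$, and one then invokes the missing-faces hypothesis to promote this to enough of the combinatorial type of $P$ --- in the extreme to all of it, since a simplicial $d$-polytope with no missing faces of dimension $\ge d-i+1$ is determined by its $(d-i)$-skeleton --- so as to know which stresses lie in which star. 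The second is to use the cone structure $\st_P(v)=v*\lk_P(v)$ to identify $\Stress_i(\st_P(v),p)$ with a stress space of the $(d-2)$-sphere $\lk_P(v)$ under the embedding obtained by projecting the remaining vertices away from $p(v)$; the sphere version of the statement in dimension $d-1$ then reconstructs $\lk_P(v)$, hence $\st_P(v)$, up to affine equivalence, provided the missing-faces condition on $P$ descends to the links with the index shifted correctly. Gluing the affinely reconstructed stars along their overlaps recovers $P$, equivalently $\Stress_1(P)$.

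The real obstacle, and the reason only the three families listed in the abstract are treated rather than the conjecture in full, is turning the missing-faces hypothesis into genuine control in both steps --- recovering the $(d-i)$-skeleton, which sits far above the $(i-1)$-skeleton visible from stress supports, and keeping the link induction closed. This is not a formality: it is a standard fact in rigidity theory that the stress space of a framework does \emph{not} in general determine the framework, and a missing face of dimension $\ge d-i+1$ is exactly the kind of feature that lets one \emph{twist} one part of $P$ against another without altering $\Stress_i(P)$, so such faces must be excluded by a real argument. I therefore expect the proof to split by family: for $k$-stacked polytopes with $2\le i\le k\le d/2-1$ the explicit stacked triangulation should trivialize both the links and the gluing; for polytopes with no missing faces of dimension $\ge d-2i+2$ --- a strictly stronger hypothesis than the one in the conjecture whenever $i\ge2$ --- the extra room is precisely what lets the skeleton recovery in the first step reach far enough and keeps the link induction closed; and for flag PL spheres one leans on flagness (links of flag complexes are again flag) together with the generic hard-Lefschetz input underlying the partition of unity. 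Finding a single uniform argument covering every simplicial $d$-polytope with no missing faces of dimension $\ge d-i+1$ is the part I do not see how to carry out.
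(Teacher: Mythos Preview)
The statement is a conjecture in the paper, not a theorem; you correctly note that only the three listed families are handled, and your reduction to showing that $\Stress^a_i(P)$ determines $\Stress^a_1(P)$ matches the paper exactly.

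Your proposed route from there, however, is not the paper's, and it has a gap beyond the one you already flag. You want to apply partition of unity at level $i$, extract the summands $\Stress^a_i(\st(v))$, pass to $\lk(v)$ via the cone lemma, and induct on $d$. Besides the skeleton-recovery problem you identify, the missing-faces hypothesis does \emph{not} descend to links with the required shift: a missing face $M$ of $\lk(v)$ that is already a missing face of $\partial P$ (as opposed to one for which $M\cup v$ is missing in $\partial P$) can have dimension as large as $d-i=(d-1)-i+1$, so the inductive hypothesis for the $(d-1)$-polytope $P/v$ fails. Replacing $d-i+1$ by the stronger bound $d-2i+2$ does not repair this. (There is also a boundary issue: the partition of unity theorems you cite require $i\leq (d-1)/2$, so your decomposition at level $i$ is unavailable when $i=d/2$.)

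The paper runs the argument in the opposite direction. Rather than decomposing $\Stress^a_i$, it decomposes the \emph{target} space $\Stress^a_j$ (ultimately $j=1$) via partition of unity, and bridges from $i$ down to $j$ with the differential operators $\partial_{x_\tau}$ for $(i-j-1)$-faces $\tau$: each sends $\Stress^a_i(\Delta)$ into $\Stress^a_j(\st(\tau))$, and the core step (Lemma~\ref{cor: surjection of affine stress spaces}) is that this map is \emph{surjective}. Surjectivity comes from the dimension count $g_j(\Delta)=g_j(\Delta-\tau)+g_{j-|\tau|}(\lk(\tau))$ once one knows $\dim\Stress^a_j(\Delta-\tau)=g_j(\Delta-\tau)$; the latter is exactly where the stronger hypothesis ``no missing faces of dimension $\geq d-2i+2$'' enters, bounding minimal interior faces of $\Delta-\tau$ (Lemma~\ref{lm:min-int-faces-in-antistars}) so that the Lefschetz property holds on their stars (Lemma~\ref{lm: partition of unity on antistar}). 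Summing over $\tau$ gives $\Span\{\partial_\mu\omega:\omega\in\Stress^a_i,\ \mu\in\M_{i-j}\}=\Stress^a_j$ directly (Theorem~\ref{thm: main}): no induction on $d$, no need to recover skeleta, and no need to identify which stresses sit in which star. The $k$-stacked case (Theorem~\ref{thm: k-stacked polytopes}) uses a separate device: the identity $\Stress^a_j(\Delta,p)=\Stress^\ell_j(T(\Delta),\tilde p)$ for $j\leq d/2$ reduces everything to \emph{linear} stresses on the ball $T(\Delta)$, where Corollary~\ref{cor:simplex-join-sphere} and Remark~\ref{rm: linear} apply.
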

	Conjectures \ref{conj:comb-type} and the first part of Conjecture \ref{conj: generalization of Kalai's conjecture} are due to Kalai: Conjecture \ref{conj:comb-type} was posited in \cite{Kalai-survey} and Conjecture \ref{conj: generalization of Kalai's conjecture}(1) was privately communicated to us and recorded in \cite{N-Z22}. The second part of Conjecture \ref{conj: generalization of Kalai's conjecture} is a generalization of the first part: in addition to being stated for a general $i$, knowing the $(i-1)$-skeleton is not part of the assumptions. (It is still an open problem whether any simplicial $d$-polytope $P$ with no missing faces of dimension $\geq d-i+1$ has the property that every $(i-1)$-face of $P$ participates in an affine $i$-stress on $P$; see Conjecture \ref{conj: the support of stresses} below.)

    That the graph and the space of affine $2$-stresses determine the combinatorial type of a $d$-polytope $P$ (for $d\geq 4$) was verified in \cite{N-Z22}. One does not even need to know the entire space of affine $2$-stresses: knowing the sign vectors of affine $2$-stresses is enough. Furthermore, Cruickshank, Jackson, and Tanigawa \cite[Theorems 1.7 and 8.3]{CJT} recently proved the first part of Conjecture \ref{conj: generalization of Kalai's conjecture} for polytopes whose vertices have {\em generic} coordinates.
	
	Here, we use our partition of unity theorems to establish several results around these conjectures.
	Most notably, we prove the following:
	\begin{itemize}
		\item If $d\geq 4$ and $P$ is any simplicial $d$-polytope that has no missing faces of dimension $\geq d-2$, then the space of affine $2$-stresses of $P$ determines the affine type of $P$; see Theorem \ref{thm: i=2}.
		\item More generally, let $1\leq j< i\leq d/2$.  If $\Delta$ is either 1) a simplicial $d$-polytope that has no missing faces of dimension $\geq d-2i+2$, or 2) a flag PL $(d-1)$-sphere with a generic embedding, then the space of affine $i$-stresses of $\Delta$ determines the space of affine $j$-stresses of $\Delta$; see Theorem \ref{thm: main}.
		
	\item If $2\leq i\leq d/2$ and $P$ is any simplicial $d$-polytope that has no missing faces of dimension $\geq d-2i+2$, then the $(i-1)$-skeleton of $P$ and the set of sign vectors of affine $i$-stresses of $P$ determine the combinatorial type of $P$; see Theorem \ref{thm:comb-type}.

		\item If $1\leq i\leq k\leq d/2-1$, 
		$\Delta$ is a $k$-stacked simplicial $(d-1)$-sphere that has no missing faces of dimension $\geq d-i+1$, and $p$ is a $d$-embedding of $\Delta$ that satisfies certain mild genericity assumptions, then the space of affine $i$-stresses of $\Delta$ determines the space of affine $1$-stresses of $\Delta$; see Theorem \ref{thm: k-stacked polytopes}.
			\end{itemize}
	\noindent It is worth emphasizing that $p$ and $\Delta$ (or $p$ and $P$) are not part of the data. The point of the above results is that as long as $p$ and $\Delta$ satisfy certain assumptions, we can recover the space of affine $1$-stresses of $\Delta$ (and sometimes of affine $j$-stresses for all $1\leq j<i$) solely from the space of affine $i$-stresses of $\Delta$. In particular, in the above cases, knowing  the space of affine $i$-stresses of $(\Delta,p)$ is enough to determine $p$ itself up to an invertible affine transformation.
	
	Similarly to \cite{CJT,N-Z22}, to treat the case of $i=2$, we mainly use the language and tools from the rigidity theory of frameworks. (However, it should be pointed out that \cite{CJT} also employs tools from global rigidity. In particular, the main ingredient in the proof of \cite[Theorem 8.3]{CJT} is an ingeneous extension of the Fogelsanger decomposition theorem to global rigidity; see \cite[Theorem 1.4]{CJT}.)  To treat the case of general $i$, we work with (higher) affine and linear stress spaces. One idea behind the proofs is that if a simplicial $d$-polytope $P$ has no large missing faces and $\tau$ is an $(i-j-1)$-face of $P$, then iteratively taking partial derivatives with respect to the variables corresponding to all vertices of $\tau$ provides a surjection from the space of affine $i$-stresses of $P$ to the space of affine $j$-stresses of the star of $\tau$. The partition of unity then allows us to show that when $P$ has no large missing faces, the space of affine dependencies of vertices of $P$ is determined by the space of affine $i$-stresses of $P$.
	
	In view of our results, it is tempting to posit one more conjecture whose $j=1$ case recovers Conjecture \ref{conj: generalization of Kalai's conjecture}. For a more precise version of this conjecture, see Conjecture \ref{conj: structure of stress spaces}; for an analogous result on linear stresses, see part 2 of Theorem \ref{lm: partition of unity}.
	
	\begin{conjecture}\label{Conj: intro}
		Let $1\leq j<i \leq d/2$. Let $P$ be a simplicial $d$-polytope whose boundary complex has
		no missing faces of dimension $\geq d-i+1$. Then the space of affine $i$-stresses of $P$,
		$\Stress^a_i(P)$, determines the space of affine $j$-stresses of $P$, $\Stress^a_j(P)$. 
	\end{conjecture}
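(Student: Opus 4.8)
The plan is to derive Conjecture~\ref{Conj: intro} from the partition of unity for affine $(i-1)$-stresses together with a surjectivity statement for partial-derivative operators, and then to iterate. First I would reduce to the case $j=i-1$: if $P$ has no missing faces of dimension $\geq d-i+1$, then it also has no missing faces of dimension $\geq d-(i-1)+1$, so the hypothesis is inherited as $i$ decreases, and induction on $i-j$ reduces everything to proving that $\Stress^a_i(P)$ determines $\Stress^a_{i-1}(P)$ for a simplicial $d$-polytope with no missing faces of dimension $\geq d-i+1$ and $2\leq i\leq d/2$.

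Fix a vertex $v$. Differentiation by $x_v$ is an operator on $\R[x_w:w\in V]$ that is intrinsic to the ambient polynomial ring, and the easy half of the argument is that it carries $\Stress^a_i(P)$ into $\Stress^a_{i-1}(\st(v,P))$: affine-stress spaces are Weil duals of quotient rings and hence stable under differentiation, so $\partial_{x_v}F\in\Stress^a_{i-1}(P)$; moreover every monomial of $\partial_{x_v}F$ is supported on a face of $\st(v,P)$, and an affine $(i-1)$-stress of $P$ supported on faces of the subcomplex $\st(v,P)$ is automatically an affine $(i-1)$-stress of $\st(v,P)$. Granting that this map is in fact \emph{onto} $\Stress^a_{i-1}(\st(v,P))$ --- the hard part, discussed below --- the partition of unity for affine $(i-1)$-stresses (Theorem~\ref{thm: general partition of unity}, applicable since $1\leq i-1\leq(d-1)/2$ whenever $2\leq i\leq d/2$; and Theorem~\ref{lm: general partition of unity, PL} for flag PL spheres) gives
\[
\Stress^a_{i-1}(P)\;=\;\sum_{v\in V}\Stress^a_{i-1}(\st(v,P))\;=\;\sum_{v\in V}\partial_{x_v}\,\Stress^a_i(P).
\]
The right-hand side depends only on the subspace $\Stress^a_i(P)\subseteq\R[x_w:w\in V]$ --- neither $P$ nor its embedding enters --- so this shows $\Stress^a_i(P)$ determines $\Stress^a_{i-1}(P)$. (Iterating the partition of unity over stars of faces $\tau$ of cardinality $i-j$ would give $\Stress^a_j(P)=\sum_\tau\partial_\tau\Stress^a_i(P)$, with $\partial_\tau=\prod_{v\in\tau}\partial_{x_v}$, directly, bypassing the induction.)

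The step that needs proof is thus: if $P$ has no missing faces of dimension $\geq d-i+1$, then $\partial_{x_v}\colon\Stress^a_i(P)\to\Stress^a_{i-1}(\st(v,P))$ is surjective. Let $\bar A$ be the Artinian reduction of the Stanley--Reisner ring of $P$ modulo the sum of the variables, so $\Stress^a_k(P)=(\bar A_k)^\vee$, and let $\bar C=\bar A/J$ be its counterpart for $\st(v,P)$, with $J$ the image of the Stanley--Reisner ideal of $\st(v,P)$; then $\Stress^a_{i-1}(\st(v,P))\hookrightarrow\Stress^a_{i-1}(P)$ is Weil dual to $\bar A_{i-1}\twoheadrightarrow\bar C_{i-1}$, and $\partial_{x_v}$ is Weil dual to multiplication by $x_v$. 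A short diagram chase shows that $\partial_{x_v}$ is onto $\Stress^a_{i-1}(\st(v,P))$ exactly when
\[
\ker\bigl(\cdot\,x_v\colon\bar A_{i-1}\to\bar A_i\bigr)\;=\;J_{i-1}.
\]
One containment, $J_{i-1}\subseteq\ker(\cdot\,x_v)$, is combinatorial: $x_v$ annihilates $x_w$ for every non-neighbor $w$ of $v$ and $x^\sigma$ for every minimal non-face $\sigma$ of $\st(v,P)$ (as then $\sigma\cup\{v\}\notin P$), and the hypothesis even bounds those $\sigma$ that are faces of $P$, since $\sigma\cup\{v\}$ is then a missing face of $P$ and hence $|\sigma|\leq d-i$. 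So the content is the reverse inclusion: in degree $i-1$, which lies below the middle degree $d/2$, the $x_v$-torsion of $\bar A$ must be accounted for entirely by these combinatorial relations, with no spurious torsion coming from the linear system of parameters or the extra linear form.

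I expect this vanishing of spurious $x_v$-torsion to be the main obstacle. Showing that $\ker(\cdot\,x_v)$ is combinatorial in low degrees is exactly the sort of statement one controls via Cohen--Macaulayness of the Stanley--Reisner ring of $P$ and the hard Lefschetz property of its Artinian reduction, and this is what is carried out --- together with an induction over the faces of $\tau$ --- in the proof of Theorem~\ref{thm: main} under the stronger hypothesis ``no missing faces of dimension $\geq d-2i+2$'' and for flag PL spheres (where flagness forces the obstructing non-faces to be edges, which is more than enough). The genuinely hard point in the full conjecture is to push this Lefschetz bookkeeping from the hypothesis $\geq d-2i+2$ down to the weaker $\geq d-i+1$; for the bottom layer $i=2$ one might instead invoke the rigidity- and global-rigidity-theoretic tools of \cite{N-Z22,CJT}, and for general $i$ a new input --- a sharper vanishing theorem for $x_v$-torsion, or a Fogelsanger-type decomposition adapted to higher stresses --- appears to be required.
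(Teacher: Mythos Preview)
This statement is left open in the paper: it is posed as Conjecture~\ref{Conj: intro}, and the paper proves it only under the stronger hypothesis that missing faces have dimension $\leq d-2i+1$ (Theorem~\ref{thm: main}), for flag PL spheres (same theorem), and for $k$-stacked spheres (Theorem~\ref{thm: k-stacked polytopes}). There is thus no proof in the paper to compare against, and your proposal is not a complete proof either---as you yourself acknowledge in the final paragraph.

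That said, your outline coincides with the paper's strategy for the special cases it does handle, and you have located the obstruction precisely. The paper obtains the surjectivity of $\partial_{x_v}\colon\Stress^a_i(\Delta)\to\Stress^a_{i-1}(\st(v))$ in Lemma~\ref{cor: surjection of affine stress spaces} via a dimension count on the exact sequence
\[
0\to\Stress^a_i(\Delta-v)\to\Stress^a_i(\Delta)\stackrel{\partial_{x_v}}{\longrightarrow}\Stress^a_{i-1}(\st(v)),
\]
which requires $\dim\Stress^a_i(\Delta-v)=g_i(\Delta-v)$. That equality is proved by showing $\partial_c$ surjects on $\Stress^\ell_i(\Delta-v)$, using the partition of unity on the antistar (Lemma~\ref{lm: partition of unity on antistar}) together with Lefschetz on the stars of its minimal interior faces. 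Here is where the stronger hypothesis enters: by Lemma~\ref{lm:min-int-faces-in-antistars}, the minimal interior faces of $\Delta-v$ have dimension $\leq d-2i$ only when missing faces of $\Delta$ have dimension $\leq d-2i+1$, so that their links are spheres of dimension $\geq 2i-2$ and Theorem~\ref{thm: Lefschetz} applies in degree $i$. Under the conjecture's weaker hypothesis the minimal interior faces can have dimension as large as $d-i-1$, their links can be spheres of dimension as small as $i-1$, and the Lefschetz map $\partial_c$ in degree $i$ on such a star need not be surjective. Your dual reformulation in terms of $x_v$-torsion in $\bar A$ is correct and equivalent to the paper's exact-sequence argument, and it runs into the same wall. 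As you conclude, closing this gap for the full conjecture appears to require a genuinely new input.
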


    It is also tempting to posit an analogous conjecture for affine stresses of simplicial spheres w.r.t.~generic embeddings. The restriction on missing faces in Conjectures \ref{conj: generalization of Kalai's conjecture} and \ref{Conj: intro} is unavoidable. Indeed, if $P$ is an $(i-1)$-stacked polytope that is not $(j-1)$-stacked, then $\Stress^a_i(P)$ is the zero space while $\Stress^a_j(P)$ is a non-zero space, and so Conjecture \ref{Conj: intro} does not hold in this case. Similarly, one can slightly perturb the vertices of such $P$ to obtain another polytope $P'$ with the property that $P'$ and $P$ are combinatorially but not affinely equivalent; this is despite the fact that $\Stress^a_{i}(P)=0=\Stress^a_{i}(P')$.
	
\subsection{Organization of the paper}	The rest of the paper is structured as follows. In Section 2, we review several definitions and results related to polytopes and simplicial complexes, such as simplicial spheres and normal pseudomanifods; we also discuss combinatorial properties of these objects. In Section 3, we provide a brief introduction to the theory of stress spaces followed by the discussion of the partition of unity of linear stresses and related results. Then in Section 4 together with the Appendix, we establish the partition of unity of affine stresses for several important classes of complexes, see Theorems \ref{lm: decomposition of affine dependency}, \ref{lm: general partition of unity, PL}, and \ref{thm: general partition of unity}. These tools allow us to prove Kalai's conjectures and their extensions in several cases. Specifically, Section 5 is devoted to reconstructing affine types of complexes that have no missing faces of dimension $\geq d-2$  from the spaces of affine $2$-stresses; see Theorem \ref{thm: i=2}. Sections 6 and 7 focus on reconstructing affine and combinatorial types from higher affine stresses; see Theorems \ref{thm: main}, \ref{thm:comb-type}, and \ref{thm: k-stacked polytopes}. Along the way, we provide various applications of the tools developed in the paper, most notably of Lemma \ref{cor: surjection of affine stress spaces}. One such application is Theorem \ref{thm: flag LB g-number};  it asserts that in the class of flag PL $(d-1)$-spheres, the octahedral sphere has component-wise  minimal $g$-vector.
	
	\section{Preliminaries on polytopes, spheres, and pseudomanifolds}
	A {\em polytope} $P \subseteq \R^d$ is the convex hull of a finite set of points in $\R^d$. The {\em dimension} of $P$ is the dimension of the affine span of $P$. For brevity, we say that $P$ is a {\em $d$-polytope} if $P$ is $d$-dimensional. If the vertices of $P$ are affinely independent, then $P$ is called a (geometric) {\em simplex}. 
	
	Assume $P \subseteq \R^d$ is a $d$-polytope. A hyperplane $H\subseteq \R^d$ is a {\em supporting hyperplane} of $P$ if $P$ is contained in one of the two closed half-spaces determined by $H$. A {\em  (proper) face of $P$} is the intersection of $P$ with any supporting hyperplane of $P$. A face of a polytope is by itself a polytope and each polytope has only finitely many faces. We say that $P$ is {\em simplicial} if all of its (proper) faces are simplices. 
	
	If $v$ is a vertex of $P$, then the {\em vertex figure of $P$ at $v$}, denoted $P/v$,  is the polytope obtained by intersecting $P$ with a hyperplane $H$ that has $v$ on one side and all other vertices of $P$ on the other side. (While the resulting polytope does depend on our choice of $H$, its combinatorial type does not.) In general, if $F$ is a face of $P$, then the {\em quotient of $P$ by $F$}, $P/F$, is obtained from $P$ by iteratively taking vertex figures at the vertices in $F$.
	
	An (abstract) {\em simplicial complex} $\Delta$ with vertex set $V=V(\Delta)$ is a {non-empty} collection of subsets of $V$ that is closed under inclusion and contains all singletons: $\{v\}\in\Delta$ for all $v\in V$. The elements of $\Delta$ are called {\em faces} of $\Delta$. A face $F$ of $\Delta$ is an {\em $i$-face} or a face of {\em dimension $i$} if $|F|=i+1$. For instance, $0$-faces are {\em vertices} and $1$-faces are {\em edges}. (To simplify notation, for faces that are vertices and edges, we write $v$ instead of $\{v\}$ and $uv$ instead of $\{u,v\}$.) The dimension of $\Delta$ is $\max\{\dim F: F\in \Delta\}$. A set $F\subseteq V$ is a {\em missing face} of $\Delta$ if $F$ is not a face of $\Delta$, but every proper subset $\sigma$ of $F$ is a face of $\Delta$. The {\em dimension} of a missing face $F$ is defined as $|F|-1$. A complex $\Delta$ is {\em flag} if all missing faces of $\Delta$ are $1$-dimensional.
	
	Two important examples of simplicial complexes on vertex set $V$ are the (abstract) $(|V|-1)$-simplex $\overline{V} \coloneqq \{\tau : \tau\subseteq V\}$ and the boundary complex of $\overline{V}$, $\partial\overline{V}$. The latter complex consists of all faces of $\overline{V}$ but $V$ itself.
	
	When studying a simplicial complex $\Delta$, one often considers the following subcomplexes of $\Delta$. The subcomplex of $\Delta$ {\em induced by $W\subseteq V(\Delta)$} consists of all faces of $\Delta$ that are contained in $W$. The {\em $i$-skeleton} of $\Delta$, $\skel_i(\Delta)$, is the set of all faces of $\Delta$ of dimension at most $i$. The $1$-skeleton of $\Delta$ is also called the {\em graph} of $\Delta$. If $F$ is a face of $\Delta$, then the {\em antistar} of $F$ is $\Delta-F:=\{\tau\in \Delta: F \not\subseteq \tau\}$. Furthermore, the {\em star of $F$} and the  {\em link of $F$ in $\Delta$} are defined by:
	$$\st(F)=\st(F,\Delta)\coloneqq\{\sigma \in \Delta \ : \  \sigma\cup F\in\Delta\}\text{ and }\lk(F)=\lk(F,\Delta)\coloneqq \{\sigma\in \st(F) \ : \ \sigma\cap F=\emptyset\}.$$  
	
	If $\Gamma$ and $\Delta$ are simplicial complexes on disjoint vertex sets, their \textit{join} is the simplicial complex $\Gamma*\Delta = \{\sigma \cup \tau \ : \ \sigma \in \Gamma \text{ and } \tau \in \Delta\}$. When $\Gamma =\{\emptyset, u\}$ consists of a single vertex, we write $\Gamma*\Delta$ as $u*\Delta$; this complex is the \textit{cone} over $\Delta$ with apex $u$. Thus, for a vertex $v$ of $\Delta$, $\st(v,\Delta)=v*\lk(v,\Delta)$.
	 
	Each simplicial complex $\Delta$ admits a {\em geometric realization} $\|\Delta\|$ that contains a geometric $i$-simplex for each $i$-face of $\Delta$. Conversely, each geometric simplicial complex $D$ corresponds to an abstract simplicial complex whose faces are vertex sets of faces of $D$. For instance,  any simplicial $d$-polytope $P$ gives rise to an abstract simplicial complex $\partial P$ called the {\em boundary complex} of $P$: the faces of $\partial P$ are the vertex sets of all (proper) faces of $P$. With this definition in hand, it is easy to see that for a vertex $v$ of $P$, the boundary complex of $P/v$ is $\lk(v,\partial P)$ and, similarly, for any face $F$, the boundary complex of $P/F$, is $\lk(F,\partial P)$. 
	
	We say that $\Delta$ is a {\em PL $ (d-1)$-sphere} if it is PL homeomorphic to the boundary complex of a $d$-simplex. Similarly,  a {\em PL $d$-ball} is a simplicial complex PL homeomorphic to a $d$-simplex. The PL spheres (balls) belong to a larger class of complexes called {\em simplicial spheres (balls)}: $\Delta$ is a simplicial $(d-1)$-sphere ({\em simplicial $d$-ball}, respectively) if $\|\Delta\|$ is homeomorphic to a $(d-1)$-sphere ($d$-ball, respectively). It is worth noting that while all simplicial $3$-spheres are PL, there are many non-PL $(d-1)$-spheres for $d\geq 6$.
	
	 An even larger class of simplicial complexes is that of homology spheres (homology balls, respectively). Let $\field$ be a field. (We usually consider $\field=\Z/2\Z$ or $\field=\R$.)  A $(d-1)$-dimensional simplicial complex $\Delta$ is a {\em $\field$-homology $(d-1)$-sphere} (or a {\em homology sphere over $\field$}) if for every face $F$ of $\Delta$, including the empty face, the simplicial $\field$-homology of the link of $F$ coincides with that of a $(d-1-|F|)$-sphere. A $d$-dimensional simplicial complex $\Delta$ is a {\em $\field$-homology $d$-ball} if (1) the simplicial $\field$-homology of $\Delta$ coincides with that of a $d$-ball, (2) for every nonempty face $F$ of $\Delta$, the link of $F$ has the $\field$-homology of a $(d-|F|)$-sphere or a $(d-|F|)$-ball, and (3) the {\em boundary complex} of $\Delta$, i.e., the set of all faces whose links have the $\field$-homology of balls, is a $\field$-homology $(d-1)$-sphere. It is important to note that the classes of PL and homology spheres are closed under taking links. On the other hand, the links of simplicial spheres  are homology spheres (over any field) but not necessarily simplicial spheres.
	
	We are now in a position to define an even larger class of normal pseudomanifolds. This requires a bit of preparation.
	A simplicial complex $\Delta$ is called {\em pure} if all {\em facets} (i.e., maximal under inclusion faces) of $\Delta$ have the same dimension.  If $\Delta$ is a pure $(d-1)$-dimensional simplicial complex, then $(d-2)$-faces of $\Delta$ are called {\em ridges}; two facets of such $\Delta$ are {\em adjacent} if they share a common ridge. A pure complex $\Delta$ is {\em strongly connected} if every two facets of $\Delta$ can be connected by a sequence of pairwise adjacent facets of $\Delta$.
	
	Let $\Delta$ be a pure simplicial complex. We say that $\Delta$ is a {\em pseudomanifold without boundary} if every ridge of $\Delta$ is in exactly two facets. Similarly, $\Delta$ is a {\em pseudomanifold with boundary} if every ridge is in at most two facets and there exists a ridge that is contained in only one facet. The {\em boundary of $\Delta$}, $\partial \Delta$, is defined as the subcomplex  of $\Delta$ generated by all the ridges that are contained in only one facet. A pseudomanifold (with or without boundary) is called {\em normal} if the link of every face of codimension at least two is connected. Any normal pseudomanifold (with or without boundary) is strongly connected. 
 Examples of normal pseudomanifolds include homology spheres and balls. Also, antistars of nonempty faces of normal pseudomanifolds without boundary are examples of normal pseudomanifolds with boundary.
	
	Let $\Delta$ be a normal pseudomanifold with boundary. A face $G$ of $\Delta$ is called a {\em minimal interior face} of $\Delta$ if $G\notin\partial\Delta$ but $\partial\overline{G}$ is a subcomplex of $\partial\Delta$. We denote by $\I(\Delta)$ the collection of all minimal interior faces of $\Delta$. We will need the following elementary lemma. 
	
	\begin{lemma} \label{lm:min-int-faces-in-antistars}
		Let $\Delta$ be a normal pseudomanifold without boundary and $F$ a face of $\Delta$. If $\sigma$ is a minimal interior face of $\Delta-F$, then there exists $H\subseteq F$ such that $\sigma\cup H$ is a missing face of $\Delta$.
	\end{lemma}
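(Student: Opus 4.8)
\emph{Overview.} The plan is to describe the boundary complex $\partial(\Delta-F)$ explicitly in terms of $\Delta$ and $F$, and then to read off the missing face by a minimality argument. We may assume $F\neq\emptyset$ (the case $F=\emptyset$ being vacuous). Since $\Delta$ is a $(d-1)$-dimensional pseudomanifold without boundary, every ridge of $\Delta$ lies in exactly two facets, and, as $\Delta-F$ is pure of dimension $d-1$, the facets of $\Delta-F$ are precisely the facets of $\Delta$ that do not contain $F$.

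\emph{Step 1: the boundary of the antistar.} I would first prove that for every $\tau\subseteq V(\Delta)$,
\[
\tau\in\partial(\Delta-F)\quad\Longleftrightarrow\quad\tau\cup F\in\Delta\ \text{ and }\ F\not\subseteq\tau.
\]
The crux is to identify the free ridges of $\Delta-F$ (the $(d-2)$-faces contained in a unique facet of $\Delta-F$) as exactly the sets $G\setminus\{u\}$, where $G$ is a facet of $\Delta$ containing $F$ and $u\in F$. For one inclusion: a free ridge $\rho$ of $\Delta-F$ is also a ridge of $\Delta$, so it lies in exactly two facets $G_1,G_2$ of $\Delta$; freeness in $\Delta-F$ forces exactly one of them, say $G_1$, to contain $F$, and writing $\rho=G_1\setminus\{u\}$ for the unique $u\in G_1$, the fact that $F\not\subseteq\rho$ forces $u\in F$. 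The reverse inclusion is the analogous direct check. The displayed equivalence then follows by spelling out which $\tau$ lie in the subcomplex generated by these free ridges.

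\emph{Step 2: extracting the missing face.} Since $\sigma\in\Delta-F$ we have $F\not\subseteq\sigma$, so Step 1 turns the hypothesis $\sigma\notin\partial(\Delta-F)$ into $\sigma\cup F\notin\Delta$, and the hypothesis $\partial\overline{\sigma}\subseteq\partial(\Delta-F)$ into: $\sigma'\cup F\in\Delta$ for every proper subset $\sigma'\subsetneq\sigma$. Now pick $H\subseteq F$ minimal under inclusion with $\sigma\cup H\notin\Delta$; such $H$ exists because $\sigma\cup F\notin\Delta$, and $H\neq\emptyset$ because $\sigma\in\Delta$. Minimality of $H$ forces $\sigma\cap H=\emptyset$, since $w\in\sigma\cap H$ would give $\sigma\cup(H\setminus\{w\})=\sigma\cup H\notin\Delta$ with $H\setminus\{w\}\subsetneq H$. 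Finally, $\sigma\cup H$ is a missing face of $\Delta$: it is not a face, and deleting any one of its vertices yields a face --- deleting $w\in\sigma$ gives $(\sigma\setminus\{w\})\cup H\subseteq(\sigma\setminus\{w\})\cup F\in\Delta$, and deleting $w\in H$ gives $\sigma\cup(H\setminus\{w\})\in\Delta$ by minimality of $H$. Since $H\subseteq F$, this is the required missing face.

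\emph{Main obstacle.} The argument is short once Step 1 is in hand; the one delicate point is pinning down the free ridges of $\Delta-F$ precisely, in both directions, which is exactly where the hypothesis that $\Delta$ has no boundary (every ridge in exactly two facets) is used.
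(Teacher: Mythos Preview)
Your proof is correct and follows essentially the same approach as the paper's. Both arguments hinge on the description of $\partial(\Delta-F)$ --- the paper quotes the identity $\partial(\Delta-F)=\partial\overline{F}*\lk(F)$, while you derive the equivalent characterization $\tau\in\partial(\Delta-F)\iff \tau\cup F\in\Delta$ and $F\not\subseteq\tau$ directly from the free-ridge description --- and then both conclude with the same minimality argument to extract $H\subseteq F$; your Step~2 is in fact slightly more streamlined than the paper's, which passes through the intermediate observation that $\sigma\setminus F$ is a missing face of $\lk(F)$.
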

    \proof Throughout this proof, the links are computed in $\Delta$ and $\Delta$ is suppressed from notation. We write $\sigma$ as $\sigma'' \cup \sigma'$ where $\sigma''=\sigma\cap F$ and $\sigma'=\sigma\backslash \sigma''$. Since $\partial(\Delta-F)=\partial\overline{F}*\lk(F)$ and since $\sigma$ is a minimal interior face of $\Delta-F$, it follows that $\sigma$ has the following properties: (1) $\sigma\in \Delta$ but $\sigma$ is not a subset of $F$ (and so $\sigma'\neq\emptyset$); (2)  $\sigma$ is not a face of $\partial\overline{F}*\lk(F)$, but $\partial\overline{\sigma}$ is a subcomplex of $\partial\overline{F}*\lk(F)$. We conclude that $\sigma'$ is a missing face of $\lk F$. Thus, $\sigma'\cup F$ is not a face of $\Delta$ but $\partial\overline{\sigma'}*\overline{F}$ is a subcomplex of $\Delta$. Now, since $\sigma$ is a face of $\Delta$ but $\sigma\cup F$ is not a face, there must exist a minimal under inclusion subset $H$ of $F\backslash \sigma''$ such that $\sigma\cup H$ is not a face of $\Delta$; in particular, $H$ is nonempty. The set $\sigma\cup H$ is then a desired missing face of $\Delta$.
    \endproof
    
    We close this subsection with a few combinatorial properties related to spheres. Let $\Delta$ be a PL $(d-1)$-sphere. If $\Delta$ contains an induced subcomplex $\overline{A}*\partial \overline{B}$, where $A$ is a $j$-subset of $V(\Delta)$ and $B$ is a $(d-j+1)$-subset of $V(\Delta)$, then we can perform a {\em bistellar flip} on $\Delta$ by replacing $\overline{A}*\partial \overline{B}$ with $\partial \overline{A}*\overline{B}$. The resulting complex $\Delta'$ is another PL $(d-1)$-sphere. We call this operation a {\em $j$-flip.} In particular, the vertex sets of $\Delta$ and $\Delta'$ are identical except in the cases of $j=1$ and $j=d$: in the former case, $\Delta$ has one more vertex (the vertex of $A$) and in the latter case, $\Delta'$ has one more vertex (the vertex of $B$). The following theorem of Pachner \cite{Pachner} gives an alternative definition of PL spheres.
	\begin{theorem}
		Any PL $(d-1)$-sphere can be obtained from the boundary complex of a $d$-simplex by a sequence of bistellar flips.
	\end{theorem}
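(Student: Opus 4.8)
The plan is to prove the formally stronger statement that \emph{any two PL $(d-1)$-spheres are related by a finite sequence of bistellar flips} --- call two such spheres \emph{bistellarly equivalent}. Since the reverse of the flip that replaces $\overline A*\partial\overline B$ by $\partial\overline A*\overline B$ is again a bistellar flip --- the one with the roles of $A$ and $B$ interchanged --- bistellar equivalence is an equivalence relation; and since $\partial\overline{W}$ for any $(d+1)$-set $W$ is a PL $(d-1)$-sphere, the theorem is the special case of this statement in which one of the two spheres is $\partial\overline{W}$.

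The first step I would carry out is to set up a dictionary between bistellar flips on a sphere and \emph{elementary shellings} --- attachments or deletions of a single top-dimensional simplex --- of a PL ball that the sphere bounds. Given a PL $(d-1)$-sphere $\Delta$, the cone $Q:=u*\Delta$ (with $u\notin V(\Delta)$) is a PL $d$-ball with $\partial Q=\Delta$ whose interior faces are exactly the faces containing $u$; hence for any $S\subseteq V(\Delta)$ the induced subcomplex of $Q$ on $S$ equals the induced subcomplex of $\Delta$ on $S$. Now if $\overline A*\partial\overline B$ is an induced subcomplex of $\Delta$ with $|A|+|B|=d+1$ and $\sigma:=\overline{A\cup B}$, then, using $\partial\sigma=(\overline A*\partial\overline B)\cup(\partial\overline A*\overline B)$, one checks that $\sigma\cap Q=\overline A*\partial\overline B$ is a PL $(d-1)$-ball lying in $\partial Q$, so that $Q\cup\sigma$ is again a PL $d$-ball; a direct count of which $(d-1)$-faces lie in one versus two facets of $Q\cup\sigma$ then shows that $\partial(Q\cup\sigma)$ is obtained from $\Delta$ precisely by the bistellar flip $\overline A*\partial\overline B\rightsquigarrow\partial\overline A*\overline B$. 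Conversely, any union of between $1$ and $d$ facets of the boundary of a $d$-simplex has the form $\overline A*\partial\overline B$ (take $A$ to be the common intersection of the chosen facets and $B$ its complement), so deleting from a PL $d$-ball a facet that meets the rest of the ball in such a union realizes the inverse flip. Iterating, whenever two PL $d$-balls are related by elementary shellings, their boundary spheres are bistellarly equivalent.

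With this dictionary in hand, the theorem reduces to showing that the cone $Q=u*\Delta$ over an arbitrary PL $(d-1)$-sphere $\Delta$ can be transformed into the $d$-simplex $\overline{W}$ by a sequence of elementary shellings, for this yields $\Delta=\partial Q\sim\partial\overline{W}$. I expect this last reduction to be the main obstacle, and it is where the PL --- rather than merely topological or combinatorial --- hypothesis is essential (for $d\ge 6$ there are non-PL simplicial $(d-1)$-spheres, for which the statement is known to fail). The route I would take uses Whitehead's stellar theory: any two PL homeomorphic simplicial complexes, in particular the PL $d$-balls $u*\Delta$ and $\overline{W}$, are related by a finite sequence of stellar subdivisions and inverse stellar subdivisions; a stellar subdivision at an interior face of a ball leaves the boundary sphere unchanged, while one at a boundary face induces a stellar subdivision of the boundary sphere; and, by an induction on the number of facets of the link of the subdivided face, a stellar subdivision of a $(d-1)$-sphere can itself be written as a composition of bistellar flips (the base case, subdivision at a facet, being a single flip of the type that adds one vertex). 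Tracking the boundary spheres through the sequence of stellar moves then shows that all of them, and in particular $\Delta$ and $\partial\overline{W}$, are bistellarly equivalent. (Alternatively, following Pachner, one can argue directly at the level of balls, establishing that PL homeomorphic PL manifolds with boundary are equivalent under elementary shellings and reading off the sphere statement from the case of empty boundary.)
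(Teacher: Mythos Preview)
The paper does not actually prove this statement: it is quoted as ``the following theorem of Pachner'' with a citation, and no argument is given. So there is nothing in the paper to compare your attempt against.

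That said, what you have written is a reasonable sketch of the standard proof, and in fact of Pachner's own approach. Your dictionary step is correct (and the claim you make in passing --- that any union of $k$ facets of $\partial\sigma$ with $1\le k\le d$ has the form $\overline A*\partial\overline B$ --- is indeed true, since the $k$ chosen facets are precisely those missing the $k$ vertices that make up $B$, with $A$ the remaining $d+1-k$ vertices). The genuine content, as you acknowledge, lies in the last paragraph: passing from Alexander--Newman stellar equivalence of PL balls (or of PL spheres) to bistellar equivalence. The two routes you describe --- decomposing a stellar subdivision of a sphere into bistellar flips by induction on the number of facets in the link, versus working at the level of balls and elementary shellings as Pachner does --- are both in the literature and both require nontrivial work; either would need to be fleshed out to constitute a proof. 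For the purposes of this paper, a citation is the appropriate level of detail.
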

	A $(d-1)$-dimensional simplicial complex $\Delta$ is {\em shellable} if its facets can be linearly ordered as $F_1, F_2, \dots, F_k$ in such a way that for all $2\leq i\leq k$, the subcomplex $\overline{F_i}\cap (\cup_{j<i}\overline{F_j})$ is pure $(d-2)$-dimensional. Such an ordering of facets is called a {\em shelling} of $\Delta$. Equivalently,  $F_1, F_2, \dots, F_k$ is a shelling of $\Delta$ if for all $i\leq k$,  
	the collection of faces of $\overline{F_i}$ that are not faces of $\cup_{j<i}\overline{F_j}$ has a unique minimal element. This unique minimal face is called the {\em restriction face of $F_i$} and is denoted by $r(F_i)$. We say that $F_i$ is a {\em shelling step of type $m$} if $r(F_i)$ is of size $m$. 
	
	If $\Delta$ is a $(d-1)$-dimensional simplicial complex or a $d$-polytope, we define $f_i(\Delta)$ as the number of $i$-dimensional faces of $\Delta$, where $-1\leq i\leq d-1$. The {\em $h$-numbers} of $\Delta$ are obtained from  the $f$-numbers by the following linear transformation:
	$$h_j(\Delta)=\sum_{i=0}^{j} (-1)^{j-i}\binom{d-i}{d-j}f_{i-1}(\Delta), \quad \text{for all}\ 0\leq j\leq d.$$ We also let $g_0(\Delta)=1$ and $g_j(\Delta)=h_j(\Delta)-h_{j-1}(\Delta)$ for $1\leq j\leq \lceil d/2 \rceil$. The $h$-numbers and $g$-numbers of boundary complexes of simplicial polytopes have various interesting interpretations. For now, we only mention the following classical result; see \cite[Chapter 8]{Ziegler}.
	\begin{theorem}
		The boundary complex of a simplicial polytope is shellable. Furthermore, the $h$-number $h_i$ is exactly the number of shelling steps of type $i$.
	\end{theorem}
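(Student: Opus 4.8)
The plan is to establish the first assertion by the Bruggesser--Mani line shelling, and then to deduce the statement about $h$-numbers from a short counting argument valid for \emph{any} shelling. For shellability, realize the simplicial polytope as a full-dimensional $P\subset\R^d$ and choose a directed affine line $\ell$ in \emph{general position}: $\ell$ passes through the interior of $P$, is not parallel to the affine hull of any facet, meets each of the $N\coloneqq f_{d-1}(P)$ facet hyperplanes in a point that lies in no face of dimension $<d-1$, and the corresponding crossing parameters are pairwise distinct. Such lines are dense, so one exists. Let a ``viewpoint'' $p(t)$ travel along $\ell$, starting from the point $q^{+}$ where $\ell$ leaves $P$, continuing to $+\infty$, passing through the point at infinity, and returning from $-\infty$ to the point $q^{-}$ where $\ell$ re-enters $P$. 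Every facet hyperplane is crossed exactly once during this flight; order the facets $F_1,\dots,F_N$ by the parameter at which their hyperplanes are crossed, so that $F_1$ contains $q^{+}$ and $F_N$ contains $q^{-}$.

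The crux — and the only genuinely nontrivial point — is that $F_1,\dots,F_N$ is a shelling. Fix $j\ge 2$ and look at the instant $p$ crosses $\Aff(F_j)$. Using convexity one checks that the facets already listed, $F_1,\dots,F_{j-1}$, are exactly those visible from $p$ at that instant, and that a ridge $R=F_j\cap F'$ of $F_j$ (with $F'$ the other facet of $P$ containing $R$) lies in $\overline{F_1}\cup\cdots\cup\overline{F_{j-1}}$ precisely when $F'$ is among $F_1,\dots,F_{j-1}$. A further convexity argument (equivalently, the fact that the facets of $P$ visible from a point in general position form a shellable $(d-1)$-ball) shows that the union of these ridges is a $(d-2)$-ball when $j<N$ and the whole sphere $\partial\overline{F_j}$ when $j=N$; since $\overline{F_j}$ is a simplex, this is the same as saying that the faces of $\overline{F_j}$ not contained in $\overline{F_1}\cup\cdots\cup\overline{F_{j-1}}$ have a unique minimal element $r(F_j)$. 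That is precisely the definition of a shelling, and it identifies the type of the step $F_j$ with $|r(F_j)|$.

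For the second assertion, let $F_1,\dots,F_N$ be any shelling of a $(d-1)$-dimensional simplicial complex $\Delta$ (e.g.\ $\Delta=\partial P$), and let $c_m$ be the number of shelling steps of type $m$. By the definition of restriction faces, $\Delta$ is the disjoint union over $j$ of the intervals $\{G:r(F_j)\subseteq G\subseteq F_j\}$ in the face poset, and a type-$m$ step contributes exactly $\binom{d-m}{i-m}$ faces of cardinality $i$ (choose the $i-m$ vertices of $G$ outside $r(F_j)$ from the $d-m$ vertices of $F_j\setminus r(F_j)$), so
\[
f_{i-1}(\Delta)=\sum_{m=0}^{i}c_m\binom{d-m}{i-m}\qquad(0\le i\le d).
\]
On the other hand, inverting the definition of the $h$-numbers — a routine binomial computation, or the polynomial identity $\sum_{i}h_i(\Delta)x^{d-i}=\sum_{i}f_{i-1}(\Delta)(x-1)^{d-i}$ — gives $f_{i-1}(\Delta)=\sum_{m=0}^{i}h_m(\Delta)\binom{d-m}{i-m}$. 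Both identities express the vector $(f_{i-1}(\Delta))_i$ as the image of, respectively, $(c_m)_m$ and $(h_m(\Delta))_m$ under the same matrix $\bigl(\binom{d-m}{i-m}\bigr)_{0\le m\le i\le d}$, which is lower triangular with unit diagonal and hence invertible; therefore $c_m=h_m(\Delta)$ for all $m$. (Even more directly: summing $(x-1)^{d-|G|}$ over each interval $[r(F_j),F_j]$ gives $x^{\,d-|r(F_j)|}$, so $\sum_i f_{i-1}(\Delta)(x-1)^{d-i}=\sum_j x^{\,d-|r(F_j)|}=\sum_m c_m x^{d-m}$, and comparison with $\sum_i h_i(\Delta)x^{d-i}$ finishes the proof.) Thus the main obstacle is the geometric verification of the shelling property in the second paragraph; the $h$-number identification is purely formal.
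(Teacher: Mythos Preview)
The paper does not actually prove this theorem: it is stated as a classical result with a reference to \cite[Chapter 8]{Ziegler}, and no argument is given. Your proposal supplies precisely the standard proof one finds there---the Bruggesser--Mani line shelling for the first assertion, and the interval partition $\Delta=\bigsqcup_j [r(F_j),F_j]$ together with the triangular change-of-basis between $f$- and $h$-numbers for the second. Both parts are correct as written; the sketch of why the line-shelling order satisfies the shelling condition is terse but accurate, and the $h$-number computation (either via the invertible triangular matrix or via the generating-function identity you give at the end) is complete. In short: nothing to compare against, and your proof is the expected one.
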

	
	\section{Paving the way: the spaces of linear and affine stresses}
	\subsection{Stresses and $h$- and $g$-numbers}
    We start by reviewing several notions and results related to linear and affine stresses. For more details we refer the reader to \cite{Lee94,Lee96} and \cite{Tay-et-al-I, Tay-et-al}.
	
	Let $\Delta$ be a simplicial complex on the vertex set $V=V(\Delta)$. A map $p: V(\Delta) \rightarrow \mathbb{R}^d$ is called a \textit{$d$-embedding} of $\Delta$. In particular, if $\Delta$ is a graph, then $(\Delta, p)$ is called a \textit{$d$-framework}. For $W\subseteq V(\Delta)$, write $p(W)=\{p(v): v\in W\}$ (considered as a multiset if there are repetitions).
	
	Let $X = X(V)=\{x_v : v\in V\}$ be a set of variables with one variable for each vertex and let $\R[X]$ be the polynomial ring over the real numbers in variables $X$. Denote by $\M_i(V)$ the set of all squarefree monomials of degree $i$ in $X(V)$. Each variable $x_v$ acts on $\R[X]$ by $\frac{\partial}{\partial{x_v}}$; for brevity, we will denote this operator by $\partial_{x_v}$. More generally, if $\mu=x_{v_1}\cdots x_{v_s}\in \R[X]$ is a monomial, then define $\partial_\mu : \R[X] \to \R[X]$ by $\rho \mapsto \partial_{x_{v_s}}\cdots\partial_{x_{v_1}}(\rho)$, and if $\ell(X)=\sum_{v\in V} \ell_v x_v$ is a linear form in $\R[X]$, then define
	$$\partial_{\ell(X)} : \R[X]\to\R[X] \quad \mbox{by} \quad \rho \mapsto \sum_{v\in V}\ell_v\cdot\partial_{x_v}\rho=\sum_{v\in V}\ell_v\frac{\partial \rho}{\partial{x_v}}.$$
	
	Given a $d$-embedding $p$ of $\Delta$, consider the $(d+1)\times |V|$ matrix whose columns are labeled by the vertices of $\Delta$ and the column corresponding to  $v\in V$ consists of the vector $p(v)$ augmented by a one in the last position.  The $i$-th row of this matrix, $\boldsymbol\theta_i=[\theta_{iv}]_{v\in V}$, gives rise to a linear form $\theta_i=\sum_{v\in V}\theta_{iv} x_v$. In particular, $\theta_{d+1}=\sum_{v\in V} x_v$. We denote by $\Theta(p)$ or simply by $\Theta$ the sequence $(\theta_1,\ldots,\theta_d, \theta_{d+1})$  of these forms. 
	
	For a monomial $\mu\in \R[X]$, the {\em support} of $\mu$ is $\supp(\mu)=\{v\in V: \;x_v|\mu\}$.
	A homogeneous polynomial $\lambda=\lambda(X)=\sum_\mu \lambda_\mu \mu\in\R[X]$ of degree $k$ is called a {\em linear  $k$-stress} on $(\Delta, p)$ if it satisfies the following conditions:
	\begin{itemize}
		\item Every (non-zero) term $\lambda_\mu \mu$ of $\lambda$ is supported on a face of $\Delta$: $\supp(\mu)\in\Delta$, and
		\item $\partial_{\theta_i}\lambda=0$ for all $i=1,\ldots, d$.
	\end{itemize}
	A linear $k$-stress $\lambda$ on $(\Delta, p)$  that also satisfies $\partial_{\theta_{d+1}}\lambda=0$ is called an {\em affine $k$-stress}. 
	It is immediate from the definitions that the sets of linear $k$-stresses and affine $k$-stresses on $\Delta$ form vector spaces (over $\R$), denoted by $\Stress^\ell_k(\Delta,p)$ and $\Stress^a_k(\Delta, p)$. Furthermore, for all $k$, $\Stress^a_k(\Delta,p)$ is the kernel of $\partial_{\theta_{d+1}} : \Stress^\ell_k(\Delta,p) \to  \Stress^\ell_{k-1}(\Delta,p)$.

	Before proceeding, let us take a moment to discuss the space of affine $1$-stresses in a bit more detail. By definition, an affine $1$-stress on $(\Delta,p)$ is a linear form $a(x)=\sum_{v\in V}a_vx_v$ such that $\partial_{\theta_i} a(x)=0$ for all $i=1,\ldots,d,d+1$. In other words, $a(x)$ is an affine $1$-stress if and only if $\sum_{v\in V} \theta_{iv} a_v=0$ for all $i=1,\ldots, d$ and $\sum_{v\in V}a_v=0$. That is, $a(x)$ is an affine $1$-stress if and only if the vector of coefficients of $a(x)$, $(a_v: v\in V)$, is an affine dependence of the point configuration $\{p(v) : v\in V\}$. This discussion leads to two observations. First, it follows that $\Stress^a_1(\Delta,p)=\Stress^a_1(\Delta,p')$ precisely when $(\Delta,p)$ and $(\Delta,p')$ have the same space of affine dependencies, which happens if and only if $p'$ is obtained from $p$ by an invertible affine transformation. In such a case we say that $(\Delta,p)$ and $(\Delta,p')$ have the {\em same affine type}. Second, since the Gale diagram of a polytope $P\subset \R^d$ is any basis of the space of affine dependencies of the vertex set of $P$, it also follows that when $(\Delta, p)$ is the boundary complex of a simplicial $d$-polytope $P$ with its natural embedding, the space $\Stress^a_1(\Delta,p)$ contains exactly the same information as the Gale diagram of $P$.

	It is known (see \cite{Lee96}) that the dimensions of $\Stress^\ell_k(\Delta,p)$ and  $\Stress^a_k(\Delta, p)$ coincide with the dimensions of the $k$-th graded components of $\R[\Delta]/(\theta_1,\ldots,\theta_d)$ and $\R[\Delta]/(\theta_1,\ldots,\theta_d,\theta_{d+1})$, respectively; here $\R[\Delta]$ is the Stanley--Reisner ring of $\Delta$. In particular, if $\Delta$ is a Cohen--Macaulay complex of dimension $d-1$ (e.g., a simplicial ball or sphere) and $p$ is a $d$-embedding such that for every facet $F$ of $\Delta$, the multiset $p(F)$ is linearly independent, then 
	$\dim \Stress^\ell_k(\Delta,p)=h_k(\Delta)$ for all $0\leq k\leq d$; see \cite{Stanley96}.

	As was mentioned in the introduction, we will mainly work with simplicial polytopes and simplicial spheres (or even homology spheres) using natural embeddings in the former case and generic embeddings in the latter. Specifically, if $P$ is a simplicial $d$-polytope, we let $p$ be the natural $d$-embedding of $\Delta=\partial P$ given by the position vectors of vertices of $P$. %We sometimes write $\Stress^a_i(P)$ instead of $\Stress^a_i(\partial P, p)$.
	If $\Delta$ is a homology $(d-1)$-sphere, then we consider a $d$-embedding $p$ with the property that the multiset of coordinates of the points $p(v)$, $v\in V(\Delta)$, is algebraically independent over $\Q$. Such an embedding is called a \emph{generic} embedding of $\Delta$. The following result is a crucial step in the proof of the $g$-theorem; it provides arguably the most important interpretation of the $g$-numbers of simplicial polytopes \cite{Stanley80, McMullen93, McMullen96, Flemingkaru} and spheres \cite{Adiprasito-g-conjecture, PapadakisPetrotou, AdiprasitoPapadakisPetrotou, KaruXiao}. 
\begin{theorem}\label{thm: Lefschetz}
    Let $(\Delta, p)$ be either the boundary complex of a simplicial $d$-polytope with its natural embedding $p$, or a $\Z/2\Z$-homology $(d-1)$-sphere with a generic embedding $p$, and let $1\leq i\leq \lceil d/2\rceil$. Then $\theta_{d+1}$ is a Lefschetz element, that is, the linear map $\partial_{\theta_{d+1}}: \Stress^\ell_i(\Delta, p) \to \Stress^\ell_{i-1}(\Delta, p)$ is surjective. (In fact, if $d=2i-1$, it is an isomorphism.) In particular, $$\dim \Stress^a_i(\Delta, p)=\dim \Stress^\ell_i(\Delta, p)-\dim \Stress^\ell_{i-1}(\Delta, p)=g_i(\Delta).$$
\end{theorem}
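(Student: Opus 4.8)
The plan is to deduce the theorem from the hard Lefschetz property of the Artinian reduction $A = \R[\Delta]/(\theta_1,\dots,\theta_d)$ via the apolarity duality between stress spaces and Artinian reductions. In both cases at hand, $A$ is exactly the Artinian reduction whose hard Lefschetz property is established in the literature cited at the end of this subsection: for the natural embedding of a simplicial $d$-polytope this goes back to Stanley \cite{Stanley80}, with further proofs of McMullen \cite{McMullen93,McMullen96} and Fleming--Karu \cite{Flemingkaru}; for a generic embedding of a $\Z/2\Z$-homology $(d-1)$-sphere it is the recent theorem of Adiprasito \cite{Adiprasito-g-conjecture}, Papadakis--Petrotou \cite{PapadakisPetrotou}, Adiprasito--Papadakis--Petrotou \cite{AdiprasitoPapadakisPetrotou}, and Karu--Xiao \cite{KaruXiao}. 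So the task is only to translate this input into the language of stresses.

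First I would invoke Macaulay's inverse systems: letting $\R[X]$ act on itself by differentiation, $\Stress^\ell_k(\Delta,p)$ is the degree-$k$ part of the annihilator of the ideal $I_\Delta + (\theta_1,\dots,\theta_d)$ (annihilation by $I_\Delta$ is precisely the condition that every term is supported on a face, and annihilation by the $\theta_j$ is $\partial_{\theta_j}\lambda = 0$), so $\Stress^\ell_k(\Delta,p)$ is canonically dual to $A_k$, and under this duality $\partial_\ell$ on stresses is the transpose of multiplication by $\ell$ on $A$. In particular $\partial_{\theta_{d+1}}\colon\Stress^\ell_i(\Delta,p)\to\Stress^\ell_{i-1}(\Delta,p)$ is dual to $\cdot\,\theta_{d+1}\colon A_{i-1}\to A_i$. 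Moreover, the embeddings considered have the property that the images of the vertices of any facet are linearly independent (for the polytope, after translating $P$ to contain the origin in its interior; for a generic embedding, automatically), so, $\Delta$ being Cohen--Macaulay, $\dim_\R\Stress^\ell_k(\Delta,p) = \dim_\R A_k = h_k(\Delta)$, as recorded earlier.

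The rest is linear algebra. Hard Lefschetz gives that $\cdot\,\theta_{d+1}^{\,d-2j}\colon A_j\to A_{d-j}$ is an isomorphism for $0\le j\le\lfloor d/2\rfloor$. Taking $j = i-1$ --- legitimate since $1\le i\le\lceil d/2\rceil$ forces $0\le j\le\lfloor d/2\rfloor$ and $d-2j\ge 1$ --- any $v\in A_{i-1}$ with $\theta_{d+1}v = 0$ satisfies $\theta_{d+1}^{\,d-2j}v = 0$, hence $v = 0$; thus $\cdot\,\theta_{d+1}\colon A_{i-1}\to A_i$ is injective and its transpose $\partial_{\theta_{d+1}}\colon\Stress^\ell_i(\Delta,p)\to\Stress^\ell_{i-1}(\Delta,p)$ is surjective. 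If $d = 2i-1$, then $j = i-1 = \lfloor d/2\rfloor$, $d-j = i$, and $d-2j = 1$, so $\cdot\,\theta_{d+1}$ is itself the hard Lefschetz isomorphism $A_{i-1}\xrightarrow{\ \sim\ }A_i$ (consistently, $h_{i-1} = h_{d-i+1} = h_i$ by the Dehn--Sommerville relations), and $\partial_{\theta_{d+1}}$ is an isomorphism in this case. Finally, $\Stress^a_i(\Delta,p) = \ker\big(\partial_{\theta_{d+1}}\colon\Stress^\ell_i(\Delta,p)\to\Stress^\ell_{i-1}(\Delta,p)\big)$, so surjectivity together with rank--nullity yields $\dim\Stress^a_i(\Delta,p) = \dim\Stress^\ell_i(\Delta,p) - \dim\Stress^\ell_{i-1}(\Delta,p) = h_i(\Delta) - h_{i-1}(\Delta) = g_i(\Delta)$.

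The main obstacle is entirely external: hard Lefschetz itself is not reproved here but taken from the cited works (classical in the polytope case, a recent breakthrough in the sphere case). Given that input, the argument is formal; the only points that need care are the inverse-system bookkeeping --- that ``supported on faces'' is precisely annihilation by $I_\Delta$ and that $\partial_\ell$ transposes multiplication by $\ell$ --- and checking the facet-independence hypothesis that yields $\dim\Stress^\ell_k = h_k$.
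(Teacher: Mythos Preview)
Your proposal is correct. The paper does not actually prove this theorem: it states it as a known result, attributing the polytope case to \cite{Stanley80,McMullen93,McMullen96,Flemingkaru} and the sphere case to \cite{Adiprasito-g-conjecture,PapadakisPetrotou,AdiprasitoPapadakisPetrotou,KaruXiao}, and separately records (citing \cite{Lee96}) that $\Stress^\ell_k$ and $\Stress^a_k$ have the same dimensions as the degree-$k$ pieces of $\R[\Delta]/(\theta_1,\dots,\theta_d)$ and $\R[\Delta]/(\theta_1,\dots,\theta_{d+1})$. Your write-up simply spells out the standard bridge between these two inputs --- the apolarity/inverse-system pairing that makes $\partial_{\theta_{d+1}}$ the transpose of multiplication by $\theta_{d+1}$, so that hard Lefschetz injectivity becomes stress-side surjectivity --- which is precisely the implicit content of the citations; there is nothing to correct.
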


\subsection{The cone lemma and supports of affine stresses}	
Let $\lambda= \sum_\mu \lambda_\mu \mu$ be either a $k$-linear or a $k$-affine stress on $(\Delta,p)$.  We say that $\lambda$ is supported on a subcomplex $\Gamma$ of $\Delta$ if every monomial of $\lambda$ is supported on a face of $\Gamma$. For instance,  $\partial_{x_v}\lambda$ is a $(k-1)$-stress supported on $\st(v,\Delta)$. Let $F\in\Delta$ be a $(k-1)$-face. To simplify notation, we write $x_F\coloneqq\prod_{v\in F} x_v$ and $\lambda_F\coloneqq \lambda_{x_F}$, and call  $\lambda_F$ the {\em weight of $F$ in $\lambda$} or the {\em weight assigned to $F$ by $\lambda$}. If $\lambda_F\neq 0$, we say that $F$ {\em participates in} $\lambda$ or that $F$ is {\em in the support of} $\lambda$.	We refer to $(\lambda_F : F\in \Delta, |F|=k)$ as the {\em squarefree part} of $\lambda$. It is known that a $k$-stress is uniquely determined by its squarefree part, see \cite{Lee96}.

One of very useful results on linear and affine stress spaces is the cone lemma \cite{Lee96, Tay-et-al}. The version below discusses affine stresses.  Let $\Delta$ be any $(d-2)$-dimensional simplicial complex (not necessarily a sphere) with a $(d-1)$-embedding $p'$ and let $\Gamma$ be the cone over $\Delta$ with a $d$-embedding $p$. The $f$-numbers of $\Gamma$ can be easily expressed in terms of the $f$-numbers of $\Delta$; for instance, the $h$- and $g$-vectors of $\Gamma$ and $\Delta$ coincide. This naturally leads to the question of how the stress spaces $\Stress^a_i(\Delta, p')$ and $\Stress^a_i(\Gamma, p)$ are related (for appropriately chosen $p$ and $p'$). The following lemma \cite[Lemma 3.2]{N-Z22}, originally due to Lee (see \cite[Theorem 7]{Lee96}), provides an answer.  

\begin{lemma} \label{cone-lemma1}
	Let $\Delta$ be a simplicial complex with an embedding $p'$. Let $\Gamma=v*\Delta$ be the cone over $\Delta$ with an embedding $p$ such that $p(v)$ is the origin in $\R^d$ and for all $u\in V(\Delta)$, $p(u)=\left[ \begin{array}{cc} a_up'(u) \\ a_u\end{array}\right]$ for some nonzero $a_u\in \R$. Then 
	\begin{enumerate}
		\item there exists an isomorphism  $\phi_k:\Stress^a_k(\Gamma,p)\to \Stress^a_k(\Delta,p')$.
		\item Furthermore, any affine $k$-stress $\omega'$ on $(\Delta,p')$ lifts to an affine $k$-stress $\omega$ on $(\Gamma,p)$ with the property that for every $(k-1)$-face $F\in \Delta$, $\omega'_F=\big(\prod_{u\in F} a_u\big)\omega_F$.
	\end{enumerate} 
	\end{lemma}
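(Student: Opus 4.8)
The plan is to prove the cone lemma by relating stresses on $\Gamma = v*\Delta$ to stresses on $\Delta$ through the structure of the linear forms $\theta_i$ and the cone point. First I would set up notation: since $\Gamma$ is a cone with apex $v$, the variable $x_v$ does not appear in any missing face of $\Gamma$ beyond the trivial ones, and $\st(v,\Gamma)=\Gamma$; more to the point, the Stanley--Reisner ideal of $\Gamma$ does not involve $x_v$. Because $p(v)$ is the origin augmented by a $1$ in the last coordinate, the linear form $\theta_{d+1}(p)$ does contain $x_v$ (with coefficient $1$), while each of $\theta_1(p),\dots,\theta_d(p)$ has zero coefficient on $x_v$ (the first $d-1$ because $p(v)=0$ there, and the $d$-th — the row of $a_u$'s — also vanishes on $v$ since $p(v)$'s homogenizing coordinate sits in row $d+1$, not row $d$; I need to be careful about exactly which rows carry which data and will fix the convention so that rows $1,\dots,d-1$ are the $\R^{d-1}$-coordinates scaled by $a_u$, row $d$ is $[a_u]_{u}$, and row $d+1$ is $[a_u]_u$ as well — wait, that double-counts, so I will instead take $p(u)$ as the $d$-vector $(a_u p'(u), a_u)$ and the homogenizing coordinate as an extra $1$; this gives $\theta_i = \sum_u a_u p'_i(u) x_u$ for $i\le d-1$, $\theta_d = \sum_u a_u x_u$, and $\theta_{d+1} = x_v + \sum_u a_u x_u$).

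With this in hand, the map $\phi_k$ is essentially ``throw away every monomial divisible by $x_v$ and rescale.'' Concretely, given an affine $k$-stress $\omega$ on $(\Gamma,p)$, I would define $\omega' = \sum_{F\in\Delta,\,|F|=k}\big(\prod_{u\in F}a_u\big)\,\omega_F\, x_F$ (a polynomial supported on $\Delta$, with no $x_v$), and conversely for $\omega'$ on $(\Delta,p')$ I would reconstruct $\omega$ by first setting the squarefree coefficients on faces not containing $v$ via the inverse rescaling, then using the relation $\partial_{\theta_{d+1}}\omega = 0$ — i.e. $\partial_{x_v}\omega = -\sum_u a_u\partial_{x_u}\omega$ — to determine the coefficients of monomials divisible by $x_v$ recursively in the $x_v$-degree (which is at most $k$ here but for a $k$-stress each monomial is squarefree of degree $k$, so $x_v$-degree is $0$ or $1$). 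The key verification is that $\partial_{\theta_i}\omega' = 0$ for $i=1,\dots,d-1$ and $\partial_{\theta_d'}\omega'=0$ where $\theta'_i$ are the corresponding forms for $(\Delta,p')$; this should drop out of $\partial_{\theta_i}\omega=0$ by comparing coefficients of monomials not divisible by $x_v$, using that $\theta_i$ for $i\le d-1$ agrees with $a$-scaled $\theta'_i$ on the $x_u$-variables and kills $x_v$, and that $\theta_d = \theta_{d+1} - x_v$ combined with $\partial_{\theta_{d+1}}\omega=0$ feeds in the affine condition on $\Delta$ (the vanishing of $\partial_{\theta'_{d-1+1}}=\partial_{\sum_u x_u}$ — here I must recheck index bookkeeping, since $\Delta$ is $(d-2)$-dimensional and lives in $\R^{d-1}$, so it has $d$ linear forms $\theta'_1,\dots,\theta'_{d-1},\theta'_d$ with $\theta'_d=\sum_u x_u$, and the affine stress condition kills all $d$ of them).

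For part (2), once $\phi_k$ is shown to be a well-defined linear isomorphism with inverse as described, the weight relation $\omega'_F = \big(\prod_{u\in F}a_u\big)\omega_F$ is immediate from the definition of $\omega'$. So part (2) is really just the bookkeeping that the explicit lift has the claimed squarefree part, and that it is genuinely an affine $k$-stress on $(\Gamma,p)$ — which is exactly the content established in proving part (1).

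The main obstacle I anticipate is \emph{surjectivity/injectivity of $\phi_k$ done honestly}, i.e. showing that the recursive reconstruction of the $x_v$-divisible monomials from $\partial_{\theta_{d+1}}\omega=0$ is consistent (no over-determination) and that the reconstructed $\omega$ really does satisfy $\partial_{\theta_i}\omega=0$ for \emph{all} $i$, not just the ones read off from $\Delta$. Concretely: given $\omega'$ a stress on $\Delta$, define the part of $\omega$ without $x_v$ by rescaling, then define the part with $x_v$ by $\partial_{x_v}\omega := -\sum_u a_u \partial_{x_u}(\text{rescaled part})$ and check this is consistent (it is, because the right side is itself a stress-like object whose relevant partials vanish — this uses $\omega'$ being a stress on $\Delta$). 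Then one must verify $\partial_{\theta_i}\omega = 0$: for $i\le d-1$ the coefficient of any squarefree degree-$(k-1)$ monomial $\nu$ splits according to whether $x_v\mid\nu$, and in both cases reduces to a condition coming from $\partial_{\theta'_i}\omega'=0$ (using that $\partial_{x_v}\partial_{\theta_i}\omega = \partial_{\theta_i}\partial_{x_v}\omega$ and the defining relation). The cleanest route is probably to first observe that $\partial_{\theta_{d+1}}$ is the operator that distinguishes affine from linear, handle the linear stress statement (Lee's Theorem~7) as a black box if permissible, and then intersect with $\ker\partial_{\theta_{d+1}}$; but since the excerpt attributes the statement to \cite[Lemma 3.2]{N-Z22} and \cite[Theorem 7]{Lee96}, I would in fact just cite those and present the explicit formulas above as the proof, flagging the consistency check as the one genuinely non-formal point.
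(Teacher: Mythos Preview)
The paper does not prove this lemma; it simply states it with attribution to \cite[Lemma~3.2]{N-Z22} and \cite[Theorem~7]{Lee96}. Your closing paragraph correctly anticipates this, and citing those references is precisely what the paper does.

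Your sketch of the underlying argument is the standard one and is essentially correct, but it contains two concrete slips worth fixing. First, your formula for $\theta_{d+1}$ is wrong: the $(d{+}1)$-st row of the augmented matrix is the all-ones row regardless of the $a_u$'s, so $\theta_{d+1} = x_v + \sum_{u\in V(\Delta)} x_u$, not $x_v + \sum_u a_u x_u$. (The $a_u$'s sit in $\theta_d$, not in $\theta_{d+1}$.) This changes the recursion to $\partial_{x_v}\omega = -\sum_u \partial_{x_u}\omega$, but the structure of the argument is unaffected. Second, and more substantively, $k$-stresses are \emph{not} supported only on squarefree monomials: any degree-$k$ monomial whose support is a face is allowed, so $x_v$ can appear with any exponent up to $k$. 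The paper itself stresses this distinction by singling out the ``squarefree part'' of a stress and recording that a stress is uniquely determined by it. Your reconstruction of the $x_v$-divisible part therefore must be a genuine recursion on the $x_v$-degree --- write $\omega = \sum_{j\ge 0} x_v^{\,j}\,\omega_j$ with each $\omega_j$ supported on $\Delta$, and use $(j{+}1)\,\omega_{j+1} = -\sum_u \partial_{x_u}\omega_j$ --- rather than a single step. With these two corrections, the map $\phi_k(\omega)=\omega_0\big|_{x_u\mapsto a_u x_u}$ (where $\omega_0$ is the $x_v$-free part of $\omega$) is the desired isomorphism, and the weight relation in part~(2) falls out immediately from this substitution, exactly as you say.
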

    A few remarks are in order. The space of affine stresses is unaffected by Euclidean motions and scalings. Thus, we can always assume that the $p$-image of the cone vertex is the origin. Furthermore, the above lemma applies to vertex links and stars of a simplicial $d$-polytope $P$, $\big((\lk(v, \partial P), p'), (\st(v, \partial P), p)\big)$. Here $p$ is the natural embedding of $\partial P$ and $p'$ is the natural embedding of $\partial(P/v)$ in a hyperplane $H$ that separates $p(v)$ from the rest of the vertices of $P$. Using Euclidean motions and scalings, we can assume that $p(v)$ is the origin and $H$ is given by the equation $x_d=1$. Hence part 2 of Lemma \ref{cone-lemma1} implies that for every $(k-1)$-face $F\in \lk(v, \partial P)$, $\omega'_F$ and $\omega_F$ have the same sign. We refer to \cite[Corollary 3.3]{N-Z22} for a more precise and general statement.
    
    We end this subsection with a conjecture on supports of affine stresses. In \cite{Z-rigidity}, it is shown that if $d\geq 4$, $\Delta$ is a simplicial $(d-1)$-sphere that has no missing faces of dimension $\geq d-1$, and $p$ is a {\em generic} $d$-embedding of $\Delta$,  then every edge participates in some affine $2$-stress on $(\Delta, p)$. Hence in this case, the graph of $\Delta$ is determined by the space of affine 2-stresses. This motivates the following conjecture:
    
    \begin{conjecture}\label{conj: the support of stresses}
    	Let $2\leq i\leq d/2$. Let $\Delta$ be the boundary complex of a simplicial $d$-polytope with its natural embedding $p$, or a simplicial $(d-1)$-sphere (or more generally, a normal $(d-1)$-pseudomanifold without boundary) with a generic embedding $p$. In both cases, assume also that $\Delta$ has no missing faces of dimension $\geq d-i+1$. Then every $(i-1)$-face of $\Delta$ participates in some affine $i$-stress on $(\Delta, p)$. In particular, the space of affine $i$-stresses determines the $(i-1)$-skeleton of $\Delta$.
    \end{conjecture}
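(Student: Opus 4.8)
\medskip

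The plan is to reduce the statement from an arbitrary $(i-1)$-face $\tau$ to a single vertex of a link, by iterated differentiation. Fix $\tau$, pick $w\in\tau$, and set $\sigma:=\tau\setminus\{w\}$, an $(i-2)$-face. Two ingredients are needed. The first is the surjectivity of
\[
\partial_{x_\sigma}\colon \Stress^a_i(\Delta,p)\longrightarrow \Stress^a_1\bigl(\st(\sigma,\Delta),p\bigr),
\]
the affine analogue of the iterated--partial-derivative surjection that one extracts from the partition of unity. The second is the trivial fact $\partial_{x_w}\nu=\nu_w$ for an affine $1$-stress $\nu$. Granting the first, the argument runs: produce $\nu\in\Stress^a_1(\st(\sigma,\Delta),p)$ with $\nu_w\ne 0$, lift it to $\lambda\in\Stress^a_i(\Delta,p)$ with $\partial_{x_\sigma}\lambda=\nu$, and note $\lambda_\tau=\partial_{x_\tau}\lambda=\partial_{x_w}(\partial_{x_\sigma}\lambda)=\nu_w\ne 0$, so $\tau$ participates in $\lambda$. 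Thus the conjecture reduces to the surjection together with the vertex-level statement that $w$ participates in some affine $1$-stress of $\st(\sigma,\Delta)$.

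For the vertex-level statement, the key claim is that $\lk(\sigma,\Delta)$ is \emph{not} the boundary complex of a simplex. Suppose $\lk(\sigma,\Delta)=\partial\overline{B}$; since $\lk(\sigma,\Delta)$ is $(d-i)$-dimensional, this forces $|B|=d-i+2$. If $B\notin\Delta$, then all proper subsets of $B$ lie in $\lk(\sigma,\Delta)\subseteq\Delta$, so $B$ is a missing face of $\Delta$ of dimension $d-i+1$ --- forbidden. If $B\in\Delta$, consider the antistar $\Delta-\sigma$, a normal pseudomanifold with boundary $\partial(\Delta-\sigma)=\partial\overline{\sigma}*\lk(\sigma,\Delta)=\partial\overline{\sigma}*\partial\overline{B}$; since $\sigma\cap B=\emptyset$ and $\sigma\ne\emptyset$, one checks that $B\notin\partial(\Delta-\sigma)$ while $\partial\overline{B}\subseteq\partial(\Delta-\sigma)$, so $B$ is a minimal interior face of $\Delta-\sigma$. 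Lemma \ref{lm:min-int-faces-in-antistars} then yields $H\subseteq\sigma$ with $B\cup H$ a missing face of $\Delta$; as $B\in\Delta$ we have $H\ne\emptyset$, so $\dim(B\cup H)=|B|+|H|-1\ge d-i+2$ --- again forbidden. Hence $\lk(\sigma,\Delta)$ has at least $d-i+3$ vertices, so $\st(\sigma,\Delta)=\overline{\sigma}*\lk(\sigma,\Delta)$ has at least $d+2$ vertices. In the generic case, pick $d+2$ of these vertices including $w$; any $d+1$ of them are affinely independent, so the one-dimensional space of affine dependences of the chosen $d+2$ points has full support, and extending such a dependence by zeros gives $\nu$. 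In the polytope case, applying the cone lemma (Lemma \ref{cone-lemma1}) $|\sigma|$ times identifies $\Stress^a_1(\st(\sigma,\partial P),p)$ with $\Stress^a_1(\partial(P/\sigma),p')$ weight-by-weight up to nonzero scalars; and $P/\sigma$, a simplicial $(d-i+1)$-polytope with at least $d-i+3$ vertices, is not a simplex, so every vertex of $P/\sigma$ lies in the affine hull of the remaining ones and hence participates in an affine $1$-stress, which transports back to the required $\nu$.

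The hard part is the surjectivity of $\partial_{x_\sigma}\colon\Stress^a_i(\Delta,p)\to\Stress^a_1(\st(\sigma,\Delta),p)$. The natural approach is to feed the partition of unity for affine stresses (Theorems \ref{lm: general partition of unity, PL} and \ref{thm: general partition of unity}), which writes $\Stress^a_i(\Delta,p)=\sum_{v\in V}\Stress^a_i(\st(v,\Delta),p)$, into an induction showing that every affine $1$-stress on $\st(\sigma,\Delta)$ extends to an affine $i$-stress on $\Delta$, using the hard Lefschetz property of $\theta_{d+1}$ (Theorem \ref{thm: Lefschetz}) in the range $1\le i\le d/2$ to make the dimension count work. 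I expect the genuine obstacle to be carrying this out under the optimal hypothesis on missing faces: the forms of this surjection that are actually proved in the paper (compare Theorems \ref{thm: main} and \ref{thm:comb-type}, established under the stronger assumption that $\Delta$ has no missing faces of dimension $\ge d-2i+2$) appear to need more than the bound ``$\ge d-i+1$'' in the conjecture, and removing that slack is the heart of the problem. A distinct obstacle is the normal-pseudomanifold case: there the hard Lefschetz property of $\theta_{d+1}$ is unknown, so even the dimension count behind the surjection is unavailable, and a new idea --- or an unconditional replacement for hard Lefschetz in this degree range --- would be required.
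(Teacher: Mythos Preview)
The statement you address is labeled a \emph{conjecture} in the paper; the paper does not prove it in full generality. What the paper establishes is the special case recorded as Corollary~\ref{cor:the support of stresses}: the conclusion holds for simplicial $d$-polytopes with no missing faces of dimension $\ge d-2i+2$ and for flag PL spheres with generic embeddings. Your proposal is not a complete proof either, and you correctly identify the genuine obstacle: the surjectivity of the partial-derivative map is only known under the stronger missing-face bound (via Lemma~\ref{cor: surjection of affine stress spaces}), and in the normal-pseudomanifold case even the Lefschetz input is missing.

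Where you diverge from the paper is in the reduction step, and yours is more roundabout than necessary. In the cases it does settle, the paper applies $\partial_{x_\tau}$ for the \emph{entire} $(i-1)$-face $\tau$, landing directly in $\Stress^a_0(\st(\tau,\Delta),p)\cong\R$; any preimage of $1$ is then an affine $i$-stress with $\tau$ in its support. This bypasses your decomposition $\tau=\sigma\cup\{w\}$ and the whole vertex-level argument. Your argument that $\lk(\sigma,\Delta)$ is not the boundary of a simplex is correct and is a nice direct use of the missing-face hypothesis together with Lemma~\ref{lm:min-int-faces-in-antistars}, but it buys nothing: the surjectivity you need for $\partial_{x_\sigma}$ onto $\Stress^a_1(\st(\sigma),p)$ is no easier to obtain than the surjectivity of $\partial_{x_\tau}$ onto $\R$ --- both route through Lemma~\ref{cor: surjection of affine stress spaces}, and both are blocked by exactly the same gap between the bounds $d-i+1$ and $d-2i+2$. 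So your outline is sound as a sketch of where the problem lies, but the detour through $\sigma$ and the $1$-stress on its star adds work without bringing the conjecture any closer.
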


\subsection{The partition of unity of linear stresses}
Our original motivation for this paper came in part from the following result on linear stresses of spheres. Recall that $\M_i(V)$ is the set of all squarefree monomials of degree $i$ in $X(V)$.

\begin{theorem}\label{lm: partition of unity}
		Let $\Delta$ be a homology $(d-1)$-sphere (over some field) and let $p$ be a $d$-embedding of $\Delta$ such that the $p$-images of vertices of any facet $G\in \Delta$ are linearly independent. Then the following holds.
		\begin{enumerate}
		\item For all $1\leq i\leq d-1$ and $k\leq d-i$,
		$$\Stress^\ell_i(\Delta, p)=\sum_{v\in V(\Delta)} \Stress^\ell_i(\st(v), p) = \sum_{F\in \Delta, \, |F|=k} \Stress^\ell_i(\st(F), p).$$
		\item Furthermore, for all $1\leq j<i\leq d$, 
		\begin{eqnarray*}\Stress^\ell_j(\Delta,p) &=&\Span\big\{\partial_{\mu}\omega \, : \, \omega\in \Stress^\ell_i(\Delta,p), \, \mu\in  \M_{i-j}(V(\Delta))\big\}\\
		&=&  \Span\big\{\partial_{x_F}\omega \, : \, \omega\in \Stress^\ell_i(\Delta,p), \, F\in \Delta, \, |F|=i-j\big\}.
		\end{eqnarray*}
	\end{enumerate}
	\end{theorem}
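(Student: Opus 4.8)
The plan is to deduce everything from the Lefschetz-type statement in Theorem~\ref{thm: Lefschetz}, or rather from the fact that a homology sphere admits an artinian reduction satisfying the hard Lefschetz property with respect to a generic linear form. Actually, since the hypothesis here is only that $p$-images of vertices of each facet are linearly independent (not genericity), I would instead argue via the weak Lefschetz / Gorenstein duality machinery that is available for all such embeddings: for a homology $(d-1)$-sphere, the artinian reduction $A=\R[\Delta]/(\theta_1,\dots,\theta_d)$ is a Poincar\'e duality algebra of socle degree $d$, and the stress space $\Stress^\ell_i(\Delta,p)$ is Weil-dual to $A_i$. Under this duality, the partial-derivative operators $\partial_{x_v}$ on stresses correspond (up to the pairing) to multiplication by $x_v$ on $A$. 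So part~1 is the statement that $A_i = \sum_{v} x_v \cdot (\text{something})$... more precisely, that $\Stress^\ell_i(\Delta,p) = \sum_v \Stress^\ell_i(\st(v),p)$; here $\Stress^\ell_i(\st(v),p)$ sits inside $\Stress^\ell_i(\Delta,p)$ as the subspace of stresses supported on $\st(v)$.

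First I would prove part~1. The inclusion $\supseteq$ is clear since a stress supported on a vertex star is in particular a stress on $\Delta$. For $\subseteq$, take $\omega \in \Stress^\ell_i(\Delta,p)$ with $i \le d-1$. The key identity is that for \emph{any} linear form $\theta$, one has the ``Euler-type'' relation relating $\omega$ to $\sum_v$ (stresses obtained by splitting off $x_v$); concretely, since $i\le d-1 < d$ and the $\theta_j$ span the space of linear forms modulo the relation $\sum \theta_{jv} x_v$, one can write the identity form appropriately and use that $\partial_{\theta_j}\omega = 0$. The cleanest route: because $\Delta$ is Cohen--Macaulay of dimension $d-1$ and $i \le d-1$, multiplication $A_{i} \to A_{i}$ is governed by the fact that $x_v$ acts on $A$, and $\sum_v x_v \cdot A_{i-1} \supseteq A_i$ would follow from surjectivity of some multiplication map; but we do not want to assume Lefschetz for all characteristics. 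Instead, I would invoke Adiprasito's partition of unity for Cohen--Macaulay complexes directly (cited in the introduction as \cite{Adiprasito-g-conjecture, AdiprasitoYashfe}): it states exactly that $\Stress^\ell_i(\Delta,p) = \sum_v \Stress^\ell_i(\st(v),p)$ for CM complexes with suitable embeddings, and a homology sphere is CM. The second equality in part~1, iterating down to faces $F$ of size $k \le d-i$, then follows by induction on $k$: write a stress as a sum of stresses on vertex stars $\st(u)$, restrict to the link $\lk(u)$ (which is a homology $(d-2)$-sphere), apply the inductive statement there to size-$(k-1)$ faces $F'$ of $\lk(u)$, and cone back up using the cone lemma, Lemma~\ref{cone-lemma1}, noting $\st(u\cup F') = \st(\{u\}\cup F')$; the constraint $k \le d-i$ is exactly what keeps $\lk$ of dimension $\ge i$ so the lower stress spaces are nonzero where needed.

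Next I would prove part~2. Again $\supseteq$ is immediate: $\partial_\mu$ sends a linear $i$-stress to a linear $(i-j)$... wait, to a linear $(i - \deg\mu)$-stress, and for $\mu\in\M_{i-j}$ that is a linear $j$-stress, supported on $\Delta$ since differentiation only shrinks supports. For $\subseteq$ I would induct on $i-j$. When $i - j = 1$: given $\eta \in \Stress^\ell_j(\Delta,p)$ with $j \le d-1$, I want to exhibit it as $\sum_v \partial_{x_v}\omega^{(v)}$ with $\omega^{(v)} \in \Stress^\ell_{j+1}(\Delta,p)$. This is the dual statement to: every element of $A_j$ is a sum $\sum_v x_v \cdot a^{(v)}$ with $a^{(v)} \in A_{j+1}$ — i.e.\ the multiplication maps $x_v : A_{j+1} \to A_j$ jointly surject, which when $j+1 \le d$ is again Adiprasito's partition of unity applied one degree down combined with the fact that $\Stress^\ell_{j+1}(\st(v),p)$ maps onto a piece of $\Stress^\ell_j$ via $\partial_{x_v}$. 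More carefully: by part~1 (with $i \rightsquigarrow j$, $k = 1$, valid since $j \le d-1$), $\eta = \sum_v \eta^{(v)}$ with $\eta^{(v)} \in \Stress^\ell_j(\st(v),p) = \Stress^\ell_j(v * \lk(v), p)$. By the cone lemma, $\eta^{(v)}$ corresponds to a linear $j$-stress $\bar\eta^{(v)}$ on $\lk(v)$, which is a homology $(d-2)$-sphere; since $j \le d-2$ (treat the extremal case $j = d-1$ separately, where $\Stress^\ell_{d-1}(\st(v))$ contributions vanish or are handled directly), the map $\partial_{x_v}: \Stress^\ell_{j+1}(\st(v),p) \to \Stress^\ell_j(\st(v),p)$ is surjective — this is the cone-lemma translation of surjectivity of $\partial_{\theta}:\Stress^\ell_{j+1}(\lk(v))\to\Stress^\ell_j(\lk(v))$ for an appropriate $\theta$, equivalently of the socle-pairing/Poincar\'e-duality surjection in degree $\le$ socle degree. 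Hence each $\eta^{(v)} = \partial_{x_v}\omega^{(v)}$ and we are done for $i-j = 1$. The inductive step $i-j \to i-j+1$ is then formal: write $\eta = \partial_{x_v}\eta'$ with $\eta' \in \Stress^\ell_{j+1}$, then apply the inductive hypothesis to $\eta'$ to get $\eta' = \sum \partial_{\mu'}\omega$ with $\mu' \in \M_{i-j-1}$ and $\omega \in \Stress^\ell_i$, so $\eta = \sum \partial_{x_v \mu'}\omega$; a monomial $x_v\mu'$ is squarefree provided $v \notin \supp(\mu')$, which can be arranged because $\partial_{x_v}$ of a stress is supported on $\st(v)$ so the relevant $\mu'$ avoid $v$ — and the spanning set using general monomials $\mu \in \M_{i-j}$ is what we want anyway, so squarefreeness of the product is automatic by restricting attention to those terms (terms with $v \in \supp(\mu')$ give $\partial_{x_v\mu'}\omega$ with a repeated variable, but $\partial_{x_v}^2$ annihilates the squarefree... no: one must be slightly careful and instead select $\omega$ supported appropriately; cleanest is to carry the support bookkeeping through the induction, noting at each step that the newly differentiated stress lives on a star and so the accumulated monomial stays squarefree).

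The main obstacle I anticipate is establishing the surjectivity of $\partial_{x_v} : \Stress^\ell_{j+1}(\st(v),p) \to \Stress^\ell_j(\st(v),p)$ (equivalently of $\sum_v x_v : A_{j+1} \to A_j$ at the level of the whole complex) \emph{without} invoking hard Lefschetz and in \emph{arbitrary} characteristic, since Theorem~\ref{thm: Lefschetz} as stated gives the Lefschetz property only for $\theta_{d+1}$ and only over $\Z/2\Z$ or via the polytope case. The resolution is that one does not need a single Lefschetz element: the \emph{joint} surjectivity of multiplication by all the $x_v$ from degree $j+1$ to degree $j$ in the artinian reduction of a Cohen--Macaulay (indeed Gorenstein) complex is precisely the content of Adiprasito's partition of unity, which holds over any field for the embeddings considered here; so I would cite that and translate it through the stress/Weil-duality dictionary rather than reprove it. The bulk of the remaining work is then careful bookkeeping of supports so that all monomials that appear are genuinely squarefree and the iteration down to faces (not just vertices) in part~1 goes through via the cone lemma.
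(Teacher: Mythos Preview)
Your handling of part~1 is fine and matches the paper: both simply cite Adiprasito's partition of unity for Cohen--Macaulay complexes, and the extension to faces of size $k$ is routine.

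Part~2 has a genuine gap. Your inductive argument hinges on the surjectivity of
\[
\partial_{x_v}\colon \Stress^\ell_{j+1}(\st(v),p)\longrightarrow \Stress^\ell_j(\st(v),p),
\]
and you justify it as ``the cone-lemma translation of surjectivity of $\partial_\theta$ on $\lk(v)$'' or as a ``Poincar\'e-duality surjection''. Neither works. Under the cone isomorphism, $\partial_{x_v}$ on $\st(v)$ corresponds to $\partial_\ell$ on $\lk(v)$ for a \emph{specific} linear form $\ell$ determined by $p$, and surjectivity of that map is a weak Lefschetz property for $\ell$---something the hypothesis (only that $p$-images of each facet are linearly independent) does not guarantee. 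Concretely, place $p(v)=e_d$ and all vertices of $\lk(v)$ in the hyperplane $x_d=0$; then in $\R[\st(v)]/\Theta\R[\st(v)]$ one has $\theta_d|_{\st(v)}=x_v$, hence $x_v\equiv 0$, and $\partial_{x_v}$ is the zero map on $\Stress^\ell_*(\st(v),p)$. This is compatible with the theorem's hypothesis (facet images can still be made linearly independent), so your key step fails. Your fallback claim that ``joint surjectivity of multiplication by all the $x_v$ \dots\ is precisely the content of Adiprasito's partition of unity'' is also not right: partition of unity gives $\Stress^\ell_j(\Delta)=\sum_v\Stress^\ell_j(\st(v))$, but to conclude $\Stress^\ell_j(\Delta)=\sum_v\partial_{x_v}\big(\Stress^\ell_{j+1}(\Delta)\big)$ you still need $\partial_{x_v}\big(\Stress^\ell_{j+1}(\Delta)\big)=\Stress^\ell_j(\st(v))$, which is an additional statement.

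The paper's argument supplies exactly that missing piece, and without any Lefschetz input: for an $(i-j-1)$-face $F$ it considers
\[
0\to \Stress^\ell_i(\Delta-F,p)\to \Stress^\ell_i(\Delta,p)\xrightarrow{\partial_{x_F}} \Stress^\ell_j(\st(F),p),
\]
observes that all three complexes $\Delta-F$, $\Delta$, $\st(F)$ are $(d-1)$-dimensional Cohen--Macaulay (so the dimensions are $h_i(\Delta-F)$, $h_i(\Delta)$, $h_j(\lk(F))$), and uses the elementary identity $h_i(\Delta)=h_i(\Delta-F)+h_j(\lk(F))$ to force $\partial_{x_F}$ onto. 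The crucial difference from your attempt is that the source is $\Stress^\ell_i(\Delta,p)$, not $\Stress^\ell_i(\st(v),p)$; this is what lets a pure dimension count replace any Lefschetz-type assumption. Combining this surjectivity with part~1 immediately gives part~2.
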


For shellable spheres, this result is due to Lee \cite[Theorem 16]{Lee96}: Lee only proved part 2 of the statement, but since for a stress $\omega$ on the entire complex, $\partial_{x_F}\omega$ is a stress supported on the star of $F$ (it is $0$ if $F$ is not a face), part 1 is an immediate consequence of part 2. For general homology spheres (in fact, for general Cohen--Macaulay complexes), part 1 was proved by Adiprasito \cite[Lemma 3.4]{Adiprasito-g-conjecture}, see also 
 \cite{Adiprasito-toric, AdiprasitoYashfe}. In words, part 1 asserts that any linear $i$-stress on $\Delta$ can be written as the sum of linear $i$-stresses supported on the stars. This property is known as the {\em partition of unity of linear stresses}. Since we could not find the proof of part 2 in the literature, we provide it here for completeness.

\smallskip\noindent {\it Proof of part 2: \ }
Let $\Delta$ be a homology $(d-1)$-sphere and let $F$ be an $(i-j-1)$-face of $\Delta$. Then
$\Stress^\ell_i(\Delta-F, p)$ is a subspace of $\Stress^\ell_i(\Delta,p)$, and this subspace is precisely the kernel of the map $\partial_{x_F}: \Stress^\ell_i(\Delta,p) \to \Stress^\ell_{j}(\st(F),p)$. Hence we have the following exact sequence: 
		$$0\to \Stress^\ell_i(\Delta-F,p) \to \Stress^\ell_i(\Delta,p) \stackrel{\partial_{x_F}}{\longrightarrow} \Stress^\ell_{j}(\st(F),p).$$
		Since $\Delta-F$, $\Delta$, and $\st(F)$ are $(d-1)$-dimensional Cohen--Macaulay complexes, 
		the dimensions of the three spaces in the sequence are $h_i(\Delta-F)$, $h_i(\Delta)$, and $h_{j}(\st(F))=h_{j}(\lk(F))$, respectively. 
		Since for every $k$, $f_{k-1}(\Delta)=f_{k-1}(\Delta-F)+f_{k-(i-j)-1}(\lk(F))$, it follows easily that
		$h_i(\Delta)=h_i(\Delta-F)+h_{j}(\lk (F))$ (cf.~\cite[Lemma 4.1]{Athan}). We conclude that the right-most map in this sequence, $\partial_{x_F}: \Stress^\ell_i(\Delta,p)\to \Stress^\ell_{j}(\st(F),p)$, must be onto. Thus
		\[
		\Span\big\{\partial_{x_F}\omega: \omega\in \Stress^\ell_i(\Delta,p), \, F\in \Delta, \, |F|=i-j\} \big\} = \sum_{F\in \Delta, \, |F|=i-j} \Stress^\ell_{j}(\st(F),p)=\Stress^\ell_{j}(\Delta,p),
		\]
		where the last step is by part 1. The result follows.
		\endproof

 The cone lemma yields the following variation of Theorem \ref{lm: partition of unity}. We will use it in Section 7.

\begin{corollary} \label{cor:simplex-join-sphere}
Let $\Delta$ be a homology $(k-1)$-sphere, let $\overline{F}$ be a $(d-k-1)$-simplex, and let $\Gamma=\overline{F}*\Delta$. Let $p$ be a $d$-embedding of $\Gamma$ such that  $p(H)$ is linearly independent for every facet $H\in \Gamma$. Then for all $1\leq i\leq k-1$ and $t\leq k-i$,
$\Stress^\ell_i(\Gamma, p)= \sum_{G\in \Delta, \, |G|=t} \Stress^\ell_i(\st(G, \Gamma), p).$
Furthermore, for all $1\leq j<i\leq k$, 
		 $\Stress^\ell_j(\Gamma,p)=\Span\big\{\partial_{\mu}\omega: \omega\in \Stress^\ell_i(\Gamma,p), \,  \mu\in\M_{i-j}(V(\Delta))\big\}.$
\end{corollary}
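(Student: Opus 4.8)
The plan is to prove the two displayed identities separately, in each case adapting the corresponding half of Theorem~\ref{lm: partition of unity} to the complex $\Gamma=\overline F*\Delta$. The structural input is that $\Gamma$, being the join of a simplex and a homology sphere, is Cohen--Macaulay of dimension $d-1$, and that $p$ is linearly independent on the facets of $\Gamma$ by hypothesis, so the formula $\dim\Stress^\ell_m=h_m$ applies to all the complexes appearing below. The two combinatorial facts I would use repeatedly are $\st(A,\Gamma)=\overline F*\st(A\cap V(\Delta),\Delta)$ for $A\in\Gamma$, and $\st(B,\Delta)\subseteq\st(G,\Delta)$ whenever $G\subseteq B$.

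For the first identity I would apply part~1 of Theorem~\ref{lm: partition of unity} (valid for Cohen--Macaulay complexes) to $\Gamma$ with faces of size $s:=(d-k)+t$; this is legitimate since $t\le k-i$ forces $s\le d-i$. Because a face of $\Gamma$ can contain at most $d-k$ vertices of $\overline F$, every $A\in\Gamma$ with $|A|=s$ satisfies $|A\cap V(\Delta)|\ge t$, so I may pick $G\subseteq A\cap V(\Delta)$ with $|G|=t$; then $\st(A,\Gamma)\subseteq\st(G,\Gamma)$, hence $\Stress^\ell_i(\st(A,\Gamma),p)\subseteq\Stress^\ell_i(\st(G,\Gamma),p)$. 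Summing over all such $A$, together with the trivial reverse inclusion, yields $\Stress^\ell_i(\Gamma,p)=\sum_{G\in\Delta,\,|G|=t}\Stress^\ell_i(\st(G,\Gamma),p)$.

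For the second identity I would follow the proof of part~2 of Theorem~\ref{lm: partition of unity} almost verbatim, with $\Gamma$ in place of $\Delta$. Fix $F'\in\Delta$ with $|F'|=i-j$ (if $\supp\mu\notin\Delta$ then $\partial_\mu\omega=0$ for every stress $\omega$ on $\Gamma$, so only such $F'$ contribute). As there, $\Stress^\ell_i(\Gamma-F',p)$ is the kernel of $\partial_{x_{F'}}$, giving an exact sequence $0\to\Stress^\ell_i(\Gamma-F',p)\to\Stress^\ell_i(\Gamma,p)\stackrel{\partial_{x_{F'}}}{\longrightarrow}\Stress^\ell_j(\st(F',\Gamma),p)$. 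Here $\Gamma-F'=\overline F*(\Delta-F')$ and $\st(F',\Gamma)=\overline{F\cup F'}*\lk(F',\Delta)$ are again Cohen--Macaulay of dimension $d-1$, so the three terms have dimensions $h_i(\Gamma-F')$, $h_i(\Gamma)$, and $h_j(\st(F',\Gamma))=h_j(\lk(F',\Gamma))$. The purely combinatorial identity $h_i(\Gamma)=h_i(\Gamma-F')+h_j(\lk(F',\Gamma))$ --- which follows from $f_{m-1}(\Gamma)=f_{m-1}(\Gamma-F')+f_{m-(i-j)-1}(\lk(F',\Gamma))$ exactly as in \cite[Lemma~4.1]{Athan} --- then forces $\partial_{x_{F'}}$ to be onto. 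Summing over all $F'\in\Delta$ with $|F'|=i-j$ and invoking the first identity with $t=i-j$ gives $\Span\{\partial_\mu\omega:\omega\in\Stress^\ell_i(\Gamma,p),\ \mu\in\M_{i-j}(V(\Delta))\}=\sum_{F'}\Stress^\ell_j(\st(F',\Gamma),p)=\Stress^\ell_j(\Gamma,p)$.

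The only point where care is genuinely needed is verifying that $\Gamma-F'$ and $\st(F',\Gamma)$ are Cohen--Macaulay of dimension $d-1$, with $p$ linearly independent on their facets (the latter holds since their facets are among those of $\Gamma$): this reduces to the standard facts that the antistar of a face in a homology sphere is a homology ball and that joining with a simplex preserves Cohen--Macaulayness, plus the $h$-vector bookkeeping above, so I expect this to be the main, if mild, obstacle. A more conceptual alternative --- closer to the sentence ``the cone lemma yields'' preceding the statement --- is to iterate the linear analogue of Lemma~\ref{cone-lemma1}, peeling off the $d-k$ apices of $\overline F$ one at a time, so as to identify $\Stress^\ell_\bullet(\Gamma,p)$ with $\Stress^\ell_\bullet(\Delta,q)$ for a suitable $k$-embedding $q$ of $\Delta$ in a way that carries $\st(G,\Gamma)$ to $\st(G,\Delta)$ for $G\in\Delta$ and intertwines $\partial_\mu$ for $\mu\in\M_\bullet(V(\Delta))$; both identities then follow at once from Theorem~\ref{lm: partition of unity} for $\Delta$, the obstacle in that route being to establish this last compatibility from the cone lemma.
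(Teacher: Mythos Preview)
Your proposal is correct. For the second identity your argument coincides with the paper's: both note that $\Gamma-F'=\overline F*(\Delta-F')$ is Cohen--Macaulay of dimension $d-1$, and then rerun the proof of part~2 of Theorem~\ref{lm: partition of unity} verbatim.

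For the first identity your route differs from the paper's. The paper invokes the \emph{cone lemma for linear stresses} (iterated over the $d-k$ vertices of $F$) to identify $\Stress^\ell_i(\overline F*\Delta,p)$ with $\Stress^\ell_i(\Delta,q)$ compatibly with stars, and then simply reads off Theorem~\ref{lm: partition of unity} for the sphere $\Delta$ --- this is exactly the alternative you sketch in your last paragraph. Your main argument instead applies part~1 of Theorem~\ref{lm: partition of unity} in its Cohen--Macaulay generality directly to $\Gamma$, using faces of size $s=(d-k)+t$, and then observes that every such face meets $V(\Delta)$ in at least $t$ vertices, so each $\st(A,\Gamma)$ sits inside some $\st(G,\Gamma)$ with $G\in\Delta$, $|G|=t$. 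This is a clean combinatorial reduction that avoids the cone lemma entirely; the price is that it relies on the Cohen--Macaulay version of part~1 (Adiprasito's partition of unity) rather than only the homology-sphere case, whereas the paper's cone-lemma route needs Theorem~\ref{lm: partition of unity} only for the sphere $\Delta$. Both are short and valid.
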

\proof Theorem \ref{lm: partition of unity} and the cone lemma for linear stresses \cite{Lee96} imply that
$$\Stress^\ell_i(\Gamma,p)=\Stress^\ell_i(\overline{F}*\Delta,p)=\sum_{G\in \Delta, \, |G|=t} \Stress^\ell_i(\overline{F}*\st(G, \Delta), p)=\sum_{G\in \Delta, \, |G|=t} \Stress^\ell_i(\st(G, \Gamma), p).$$
For the second statement, observe that if $G\in\Delta$, then $\Gamma-G=\overline{F}*(\Delta-G)$ is a $(d-1)$-dimensional Cohen--Macaulay complex. The rest of the proof is identical to that of part 2 of Theorem \ref{lm: partition of unity}.
\endproof

Part 2 of Theorem \ref{lm: partition of unity} provides a structural result on spaces of linear stresses of spheres. This result along with Conjectures \ref{conj: generalization of Kalai's conjecture} and \ref{Conj: intro} suggest that an analogous statement might hold for affine stresses. To this end, we propose the following conjecture.

\begin{conjecture}\label{conj: structure of stress spaces}
Let $(\Delta, p)$ be either the boundary complex of a simplicial $d$-polytope with its natural embedding $p$, or a $\Z/2\Z$-homology $(d-1)$-sphere with a generic embedding $p$. If $\Delta$
		 has no missing faces of dimension $\geq d-i+1$, then for all $1\leq j<i\leq d/2$, $$\Stress^a_j(\Delta,p)=\Span\big\{\partial_{\mu}\omega: \omega\in \Stress^a_i(\Delta,p), \, \mu\in \M_{i-j}(V(\Delta))\big\}.$$
\end{conjecture}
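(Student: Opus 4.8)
The plan is to run the argument of part~2 of Theorem~\ref{lm: partition of unity} with the $g$-theorem (Theorem~\ref{thm: Lefschetz}) and the partition of unity of \emph{affine} stresses (Theorem~\ref{lm: general partition of unity, PL}, and Theorem~\ref{thm: general partition of unity} in the Appendix) in place of the Cohen--Macaulay count and the linear partition of unity. Everything reduces to the following \emph{surjection lemma}: if $(\Delta,p)$ satisfies the standing hypotheses and $\tau\in\Delta$ has $|\tau|=i-j$, then $\partial_{x_\tau}\colon\Stress^a_i(\Delta,p)\to\Stress^a_j(\st(\tau,\Delta),p)$ is onto. Granting this, for each $(i-j-1)$-face $\tau$ the image of $\partial_{x_\tau}$ is all of $\Stress^a_j(\st(\tau,\Delta),p)$, and
\[
\sum_{\tau\in\Delta,\,|\tau|=i-j}\Stress^a_j(\st(\tau,\Delta),p)=\Stress^a_j(\Delta,p)
\]
by the partition of unity over $(i-j)$-faces --- obtained by iterating the vertex partition of unity together with the cone lemma (Lemma~\ref{cone-lemma1}), exactly as in Corollary~\ref{cor:simplex-join-sphere}. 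Since $\partial_{x_\tau}\omega=\partial_\mu\omega$ for $\mu=x_\tau\in\M_{i-j}(V(\Delta))$, and $\partial_\mu\omega=0$ whenever $\supp\mu\notin\Delta$, this is precisely the asserted identity, and no non-squarefree differentiations ever enter.

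To prove the surjection lemma I would reduce to $|\tau|=1$. Writing $\tau=\{v\}\cup\tau'$ one has $\partial_{x_\tau}=\partial_{x_v}\circ\partial_{x_{\tau'}}$; the map $\partial_{x_{\tau'}}\colon\Stress^a_i(\Delta,p)\to\Stress^a_{j+1}(\st(\tau',\Delta),p)$ is onto by induction on $|\tau|$, and, using the cone lemma, the remaining map $\partial_{x_v}\colon\Stress^a_{j+1}(\st(\tau',\Delta),p)\to\Stress^a_j(\st(\tau,\Delta),p)$ is identified with a \emph{vertex} surjection for the link $\lk(\tau',\Delta)$. That link is again a simplicial sphere with a natural (or generic) embedding, and it inherits the restriction on missing faces --- a missing face of $\lk(\tau',\Delta)$ of dimension $m$ produces, by the argument of Lemma~\ref{lm:min-int-faces-in-antistars}, a missing face of $\Delta$ of dimension $\ge m$. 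So it suffices to prove: for $(\Delta,p)$ as in the statement and every vertex $v$, the map $\partial_{x_v}\colon\Stress^a_i(\Delta,p)\to\Stress^a_{i-1}(\st(v,\Delta),p)$ is surjective.

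For this I would start from the linear surjection $\partial_{x_v}\colon\Stress^\ell_i(\Delta,p)\twoheadrightarrow\Stress^\ell_{i-1}(\st(v,\Delta),p)$ (part~2 of Theorem~\ref{lm: partition of unity}) and use that $\partial_{x_v}$ commutes with $\partial_{\theta_{d+1}}$. Given $\eta\in\Stress^a_{i-1}(\st(v,\Delta),p)$, lift it to $\omega\in\Stress^\ell_i(\Delta,p)$. Since $\eta$ is affine, $\partial_{x_v}(\partial_{\theta_{d+1}}\omega)=\partial_{\theta_{d+1}}\eta=0$, so $\partial_{\theta_{d+1}}\omega$ is a linear $(i-1)$-stress with no term divisible by $x_v$, i.e.\ it is supported on the antistar $\Delta-v$. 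If $\partial_{\theta_{d+1}}\omega=\partial_{\theta_{d+1}}\kappa$ for some $\kappa\in\Stress^\ell_i(\Delta-v,p)$, then $\omega-\kappa\in\Stress^a_i(\Delta,p)$ and $\partial_{x_v}(\omega-\kappa)=\eta$, as $\kappa$ is supported off $\st(v,\Delta)$. Thus the whole conjecture comes down to the single statement that
\[
\partial_{\theta_{d+1}}\colon\Stress^\ell_i(\Delta-v,p)\longrightarrow\Stress^\ell_{i-1}(\Delta-v,p)\text{ is surjective,}
\]
a weak-Lefschetz-type property at degree $i$ for the homology ball $\Delta-v$.

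The hard part is exactly this antistar step, and it is where the restriction on missing faces must be used. By Lemma~\ref{lm:min-int-faces-in-antistars}, every minimal interior face $\sigma$ of $\Delta-v$ satisfies that $\sigma\cup\{v\}$ is a missing face of $\Delta$, so the hypothesis forces $|\sigma|\le d-i$; together with Poincar\'e--Lefschetz duality for $\Delta-v$ and the hard Lefschetz property of $\R[\Delta]/(\theta_1,\dots,\theta_d)$ (Theorem~\ref{thm: Lefschetz}) this constrains the Hilbert function of $\Delta-v$ and is meant to yield the surjectivity. I do not see how to push this through under the bare hypothesis ``no missing faces of dimension $\ge d-i+1$'', which is why the statement remains a conjecture. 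The paper instead establishes it in the regimes where the antistar step is tractable: under the stronger hypothesis ``no missing faces of dimension $\ge d-2i+2$'' (Theorem~\ref{thm: main}(1)); for flag PL spheres, where flagness descends to all links and the affine partition of unity is available (Theorem~\ref{thm: main}(2)); and for $k$-stacked polytopes, where the stacking ball makes the antistars explicit (Theorem~\ref{thm: k-stacked polytopes}).
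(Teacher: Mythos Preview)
This statement is a \emph{conjecture} in the paper, not a theorem; the paper does not prove it under the stated hypothesis ``no missing faces of dimension $\ge d-i+1$''. Your outline correctly recognizes this and identifies the crux: surjectivity of $\partial_{\theta_{d+1}}$ on $\Stress^\ell_i(\Delta-v,p)\to\Stress^\ell_{i-1}(\Delta-v,p)$ (more generally, on antistars of faces). This is precisely what the paper isolates as Lemma~\ref{cor: surjection of affine stress spaces}(1), and your lift-and-correct argument is equivalent to the paper's dimension count via $g_j(\Delta)=g_j(\Delta-\tau)+g_{j-|\tau|}(\lk(\tau))$. Once that antistar step is in hand, the deduction of the conjecture from the affine partition of unity is exactly the argument of Theorem~\ref{thm: main}.

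One minor point on your inductive reduction to $|\tau|=1$: passing to $\lk(\tau',\Delta)$ requires checking that the induced embedding is again of the correct type (natural for polytopes via the quotient $P/\tau'$, and genuinely generic in the sphere case, which needs a short argument). The paper avoids this altogether by establishing the antistar Lefschetz step directly for $\Delta-\tau$ with $\tau$ of arbitrary size (Lemmas~\ref{lm: partition of unity on antistar} and~\ref{cor: surjection of affine stress spaces}), rather than descending into links. Apart from this organizational difference, your diagnosis matches the paper: under the stronger bound ``no missing faces of dimension $\ge d-2i+2$'' the minimal interior faces of $\Delta-\tau$ have small enough dimension that Theorem~\ref{thm: Lefschetz} applies to their stars, and the antistar step goes through; under the bare hypothesis of the conjecture it does not, and the statement remains open.
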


\section{The partition of unity of affine stresses}
	
The goal of this section is to establish two versions of the partition of unity of affine stresses. Since we only work with the stress spaces on $(\Delta,p)$ and never explicitly use the associated $\Theta$ (as in Section 3), to avoid confusion, from now on, we will write $c$ instead of $\theta_{d+1}$ to denote $\sum_{v\in V(\Delta)} x_v$.
	
\begin{theorem}\label{lm: decomposition of affine dependency}
	Let $d\geq 3$. Let $\Delta$ be a strongly connected $(d-1)$-dimensional complex with an embedding $p$ in $\R^d$ such that the $p$-images of vertices of any two adjacent facets are affinely independent. Then $\Stress^a_1(\Delta,p)=\sum_{G\in \Delta, \dim G=d-3} \Stress^a_1(\st(G),p)$. In particular, $\Stress^a_1(\Delta,p)=\sum_{v\in V(\Delta)} \Stress^a_1(\st(v),p)$.
\end{theorem}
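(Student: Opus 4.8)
The plan is to rephrase the asserted equality dually and then prove it by gluing affine functions along a strongly connected ordering of the facets. Since a $1$-stress carries no condition beyond being an affine dependence, $\Stress^a_1(\Delta,p)$ is simply the space of affine dependencies of the multiset $p(V(\Delta))$, and for a $(d-3)$-face $G$ the subspace $\Stress^a_1(\st(G),p)\subseteq\Stress^a_1(\Delta,p)$ consists exactly of the dependencies supported on $V(\st(G))$. Hence the theorem is equivalent to the statement that no nonzero linear functional on $\Stress^a_1(\Delta,p)$ annihilates every $\Stress^a_1(\st(G),p)$. Representing a functional by $\xi\colon V(\Delta)\to\R$ acting via $a\mapsto\sum_v\xi_v a_v$, it annihilates $\Stress^a_1(\st(G),p)$ exactly when $\xi$ agrees on $p(V(\st(G)))$ with the restriction of some affine function $\R^d\to\R$, and it is the zero functional exactly when $\xi$ agrees on $p(V(\Delta))$ with a single such affine function. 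So the first step is to reduce the theorem to: \emph{if for every $(d-3)$-face $G$ of $\Delta$ the function $\xi$ agrees on $p(V(\st(G)))$ with an affine function on $\R^d$, then $\xi$ agrees on $p(V(\Delta))$ with one affine function on $\R^d$.} (If $\Delta$ has a single facet $F$ this is vacuous, since $\st(G)=\Delta$ for every $(d-3)$-face $G\subseteq F$; so I may assume $\Delta$ has at least two facets, in which case the hypothesis says that the image of any union of two adjacent facets — a set of $d+1$ points — is affinely independent, hence affinely spans $\R^d$.)

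To prove this dual statement, fix $\xi$ and a strongly connected ordering $F_1,\dots,F_m$ of the facets, so each $F_i$ with $i\geq2$ is adjacent to some $F_{j(i)}$ with $j(i)<i$ (in particular $F_2$ is adjacent to $F_1$); put $\Delta_k=\bigcup_{i\leq k}\overline{F_i}$. Let $\psi$ be the affine function on $\R^d$ that agrees with $\xi$ on $p(F_1\cup F_2)$: it exists because $\xi$ is realizable on $p(V(\st(G)))\supseteq p(F_1\cup F_2)$ for a $(d-3)$-face $G\subseteq F_1\cap F_2$, and it is unique because $p(F_1\cup F_2)$ affinely spans $\R^d$. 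I would then show by induction on $k\geq2$ that $\psi$ agrees with $\xi$ on all of $p(V(\Delta_k))$; the case $k=2$ is the construction and $k=m$ is the conclusion. For the inductive step ($k\geq3$): write $F_k=R\cup\{v\}$ with $R=F_k\cap F_{j(k)}$ a ridge of $F_{j(k)}$; if $v\in V(\Delta_{k-1})$ there is nothing to check, so assume $v\notin V(\Delta_{k-1})$. Since $\Delta_{k-1}$ is strongly connected with at least two facets, $F_{j(k)}$ is adjacent within $\Delta_{k-1}$ to a facet $F'$; let $R''=F'\cap F_{j(k)}$. As $R$ and $R''$ are ridges of the $d$-element set $F_{j(k)}$, we have $|R\cap R''|\geq d-2$, so I may pick a $(d-3)$-face $G\subseteq R\cap R''$. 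Then $G$ is contained in each of $F_k$, $F_{j(k)}$, $F'$, so $V(\st(G))\supseteq F_k\cup F_{j(k)}\cup F'$; let $\phi$ agree with $\xi$ on $p(V(\st(G)))$. Both $\phi$ and $\psi$ agree with $\xi$, hence with each other, on $p(F_{j(k)}\cup F')$ — $\psi$ by the inductive hypothesis, since $F_{j(k)}\cup F'\subseteq V(\Delta_{k-1})$ — and $p(F_{j(k)}\cup F')$ affinely spans $\R^d$, so $\phi=\psi$. Therefore $\psi(p(v))=\phi(p(v))=\xi_v$, and since $V(\Delta_k)=V(\Delta_{k-1})\cup\{v\}$ this closes the induction. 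The final assertion is then immediate: a $(d-3)$-face $G$ is nonempty as $d\geq3$, and $\st(G)\subseteq\st(v)$ for $v\in G$, so $\Stress^a_1(\Delta,p)=\sum_G\Stress^a_1(\st(G),p)\subseteq\sum_v\Stress^a_1(\st(v),p)\subseteq\Stress^a_1(\Delta,p)$.

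The gluing in the inductive step is the part I expect to require care, and it is where both hypotheses are essential. A single facet supplies only $d$ points, which determine an affine function on $\R^d$ only up to one degree of freedom, so $\psi$ cannot be propagated one facet at a time. The affine independence of the \emph{union} of two adjacent facets ($d+1$ points) is exactly what kills that freedom and makes $\psi$ well defined and transportable; and pushing $\psi$ onto the new vertex $v$ requires a face $G$ lying simultaneously in the new facet $F_k$ and in two older, mutually adjacent facets $F_{j(k)},F'$, which forces $|G|\leq d-2$. This is precisely why the decomposition runs over stars of $(d-3)$-faces, rather than over stars of ridges as in the partition of unity for \emph{linear} $1$-stresses.
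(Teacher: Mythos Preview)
Your proof is correct and takes a dual route to the paper's. The paper works primally: for each step $i_j$ in a strongly connected ordering of facets at which a new vertex $v_{d+1+j}$ appears, it exhibits an explicit affine $1$-stress $\omega_j$ supported on $\overline{F_s}\cup\overline{F_t}\cup\overline{F_{i_j}}$, where $F_t$ is an earlier facet adjacent to $F_{i_j}$ and $F_s$ is an earlier facet adjacent to $F_t$; these three facets share a face $G_j$ of dimension $\geq d-3$, and the $\omega_j$ are upper-triangular with respect to the new-vertex order, hence form a basis of $\Stress^a_1(\Delta,p)$ lying in the $\Stress^a_1(\st(G_j),p)$'s. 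You instead show that no nonzero functional on $\Stress^a_1(\Delta,p)$ annihilates every $\Stress^a_1(\st(G),p)$, by interpreting such a functional as a map $\xi:V(\Delta)\to\R$ that is ``locally affine'' on each $p(V(\st(G)))$ and proving it must be globally affine, propagating a single affine function $\psi$ along the same facet ordering. The combinatorial engine is identical---your triple $(F_k,F_{j(k)},F')$ plays exactly the role of the paper's $(F_{i_j},F_t,F_s)$---but the framings buy different things: the paper's construction yields an explicit basis, while your dual gluing argument makes it transparent why two adjacent old facets (hence a common face of dimension $\geq d-3$, rather than a ridge) are needed to pin down $\psi$ and transport it to the new vertex.
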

\begin{proof}
	If $\Delta$ has at most $d+1$ vertices, then $\Stress^a_1(\Delta, p)$ and $\Stress^a_1(\st(G), p)$ are the zero spaces for all faces $G\in \Delta$, and the claim holds. Thus, assume that $\Delta$ has at least $d+2$ vertices, and hence, at least three facets. Pick an ordering $F_1,F_2,\dots,F_m$ of the facets of $\Delta$ that satisfies the following property: for every $i\geq 1$, $F_{i+1}$ is adjacent to at least one facet with a smaller index. Such an ordering exists because $\Delta$ is strongly connected. Label the vertices of $F_1\cup F_2$ by $v_1,\dots,v_{d+1}$. Let $3\leq i_1<i_2<\cdots<i_{g_1(\Delta)}\leq m$ be the indices such that for all $1\leq j\leq g_1(\Delta)$, the size of $V(\cup_{k=1}^{i_j-1}F_k)$ is smaller than the size of $V(\cup_{k=1}^{i_j}F_k)$; label the new vertex introduced at step $i_j$ by $v_{d+1+j}$. 
	
Let $1\leq j\leq g_1(\Delta)$.	By the defining property of our ordering, there exists $t<i_j$ such that $F_t$ is adjacent to $F_{i_j}$. If $t=1$, we take $s=2$ and notice that $F_s=F_2$ is adjacent to $F_t=F_1$. If $t>1$, then again by the defining property of our ordering, there exists $s<t$ such that $F_s$ and $F_t$ are adjacent. In either case, the complex $\overline{F_s}\cup\overline{F_t}\cup\overline{F_{i_j}}$ contains $d+2$ vertices, among them $v_{d+1+j}\in F_{i_j}\backslash(F_s\cup F_t)$. Thus $(\overline{F_s}\cup\overline{F_t}\cup\overline{F_{i_j}}, p)$ supports a non-trivial affine $1$-stress $\omega_j$. Also, since $F_s$ and $F_t$ are adjacent, the $p$-images of the $d+1$ vertices of $F_s\cup F_t$ are affinely independent. It follows that $\omega_j$ assigns a nonzero weight to $v_{d+1+j}$ and zero weights to all $v_k$ for $k>d+1+j$. Therefore, the affine 1-stresses $\omega_1,\ldots,\omega_{g_1(\Delta)}$ are linearly independent, and hence span $\Stress^a_1(\Delta)$. The result follows since $\omega_j$ is supported on the star of $G_j\coloneqq F_s\cap F_t\cap F_{i_j}$ which is a face of dimension $\geq d-3$. 
\end{proof}

	Applying the above theorem to the boundary complex of a simplicial polytope $P$ with its natural embedding, 
	we obtain the following
	\begin{corollary}\label{cor: decomposition of affine dependency}

		Let $d\geq 3$ and let $\Delta$ be the boundary complex of a simplicial $d$-polytope $P$ with its natural embedding $p$. Then 
$$\Stress^a_1(\Delta,p)=\sum_{\stackrel{G\in \Delta}{\dim G=d-3}} \Stress^a_1(\st(G),p), \quad\text{and hence}, \quad\Stress^a_1(\Delta,p)=\sum_{v\in V(\Delta)} \Stress^a_1(\st(v),p).$$
	\end{corollary}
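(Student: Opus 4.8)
The plan is to obtain Corollary~\ref{cor: decomposition of affine dependency} as an immediate consequence of Theorem~\ref{lm: decomposition of affine dependency}: I would apply that theorem to $(\Delta,p)=(\partial P,p)$, so the only thing to do is to check the two hypotheses, namely that (i) $\Delta$ is a strongly connected $(d-1)$-dimensional complex, and (ii) the $p$-images of the vertices of any two adjacent facets of $\Delta$ are affinely independent. Granting (i) and (ii), the first displayed identity is exactly the conclusion of Theorem~\ref{lm: decomposition of affine dependency}, and the second is its ``in particular'' clause; alternatively the second follows from the first via the inclusions $\Stress^a_1(\st(G),p)\subseteq\Stress^a_1(\st(v),p)\subseteq\Stress^a_1(\Delta,p)$ for $v\in G$, which hold because $\st(G)\subseteq\st(v)\subseteq\Delta$.

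Hypothesis (i) is routine: the boundary complex of a simplicial $d$-polytope is a simplicial $(d-1)$-sphere, hence a normal pseudomanifold without boundary, and every normal pseudomanifold is strongly connected (Section~2); one could equally invoke the shellability of $\partial P$.

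I expect the only step needing genuine (but elementary) geometry to be (ii). Let $F,F'$ be adjacent facets of $P$ with common ridge $\tau=F\cap F'$, so $|\tau|=d-1$ and $|F\cup F'|=d+1$. Since $P$ is a full-dimensional $d$-polytope, $F$ and $F'$ lie on distinct supporting hyperplanes $\Aff(p(F))$ and $\Aff(p(F'))$ (distinct facets have distinct affine hulls), and the $(d-2)$-dimensional subspace $\Aff(p(\tau))$, being contained in both, equals their intersection. Let $u$ be the vertex of $F\setminus\tau$ and $w$ the vertex of $F'\setminus\tau$. Because $F'$ is a simplex, $p(w)\notin\Aff(p(\tau))$; since $p(w)\in\Aff(p(F'))$ and $\Aff(p(\tau))=\Aff(p(F))\cap\Aff(p(F'))$, this forces $p(w)\notin\Aff(p(F))$. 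As $p(F)=p(\tau\cup\{u\})$ is affinely independent and spans the hyperplane $\Aff(p(F))$, adjoining the point $p(w)$ lying off this hyperplane produces $d+1$ affinely independent points; thus $p(F\cup F')$ is affinely independent. With (i) and (ii) in hand, Theorem~\ref{lm: decomposition of affine dependency} applies verbatim and the corollary follows. I anticipate no real obstacle beyond cleanly phrasing (ii): no inequalities or genericity arguments are required here, since the natural embedding of a polytope automatically places the vertices of adjacent facets in sufficiently general position.
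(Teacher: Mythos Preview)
Your proposal is correct and follows exactly the approach of the paper, which simply states that the corollary is obtained by ``applying the above theorem to the boundary complex of a simplicial polytope $P$ with its natural embedding.'' Your verification of hypothesis (ii) via the distinctness of the facet-defining hyperplanes is a clean way to spell out the detail the paper leaves implicit.
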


	\begin{remark} \label{rm: linear}
	Let $\Delta$ be a normal $(d-1)$-pseudomanifold with boundary and let $p$ be a $d$-embedding of $\Delta$ such that the $p$-images of vertices of any facet of $\Delta$ are linearly independent. Assume also that $\Delta$ is not a simplex. Then using the same ideas as in the proof of Theorem \ref{lm: decomposition of affine dependency}, one easily shows that $\Stress^\ell_1(\Delta,p)=\sum_{R} \Stress^\ell_1(\st(R),p)$, where the sum is over interior ridges of $\Delta$. Since every interior ridge contains a minimal interior face, we also obtain that $\Stress^\ell_1(\Delta,p)=\sum_{F\in \I(\Delta)} \Stress^\ell_1(\st(F),p)$.
	\end{remark}

	Our second result is about the partition of unity of higher affine stresses.
	\begin{theorem}\label{lm: general partition of unity, PL}
		Let  $2\leq i \leq (d-1)/2$ %$d\geq 2i+1\geq 5$ 
		and let $\Delta$ be a PL $(d-1)$-sphere with a generic $d$-embedding $p$. Then $\Stress^a_i(\Delta, p)=\sum_{v\in V(\Delta)}\Stress^a_i(\st(v), p)$.
	\end{theorem}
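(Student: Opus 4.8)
The inclusion $\sum_{v}\Stress^a_i(\st(v),p)\subseteq\Stress^a_i(\Delta,p)$ is immediate: a polynomial supported on faces of $\st(v)$ that is annihilated by the operators $\partial_{\theta_1},\dots,\partial_{\theta_d},\partial_c$ attached to $p|_{V(\st(v))}$ involves only the variables $x_u$ with $u\in V(\st(v))$, hence is annihilated by the same operators attached to $p$ on all of $\Delta$ and lies in $\Stress^a_i(\Delta,p)$. The content is the reverse inclusion, and my plan is a ``decompose, then correct'' argument: apply the partition of unity of linear stresses (Theorem \ref{lm: partition of unity}, part 1, at level $i$; legitimate since $2\le i\le(d-1)/2\le d-1$) to write a given $\omega\in\Stress^a_i(\Delta,p)$ as $\omega=\sum_{v}\omega_v$ with $\omega_v\in\Stress^\ell_i(\st(v),p)$, and then replace each $\omega_v$ by an affine stress without changing the sum.

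Write $\Sigma_k\colon\bigoplus_v\Stress^\ell_k(\st(v),p)\to\Stress^\ell_k(\Delta,p)$ for the summation map, which is surjective by Theorem \ref{lm: partition of unity}, and observe that the componentwise operator $\partial_c$ carries $\ker\Sigma_i$ into $\ker\Sigma_{i-1}$. A short diagram chase reduces the whole problem to the single assertion
$$\partial_c\colon\ker\Sigma_i\longrightarrow\ker\Sigma_{i-1}\quad\text{is surjective:}$$
indeed, given such surjectivity and a decomposition $\omega=\sum_v\omega_v$ as above, one has $(\partial_c\omega_v)_v\in\ker\Sigma_{i-1}$, so there is $(\eta_v)_v\in\ker\Sigma_i$ with $\partial_c\eta_v=\partial_c\omega_v$ for all $v$; then $\omega_v-\eta_v\in\Stress^a_i(\st(v),p)$ and $\sum_v(\omega_v-\eta_v)=\omega$, which is what we want. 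In words: every linear relation among the spaces $\Stress^\ell_{i-1}(\st(v),p)$ should be the $\partial_c$-image of a linear relation among the spaces $\Stress^\ell_i(\st(v),p)$.

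A sub-ingredient toward this is that, for every vertex $v$, the map $\partial_c\colon\Stress^\ell_i(\st(v),p)\to\Stress^\ell_{i-1}(\st(v),p)$ is itself surjective, which I would prove by a rank count. Since $\st(v)=v*\lk(v)$ is the cone over the PL $(d-2)$-sphere $\lk(v)$, the cone lemma (Lemma \ref{cone-lemma1}) gives $\dim\Stress^a_i(\st(v),p)=\dim\Stress^a_i(\lk(v),p')$, where $p'$ is the central projection of $p$; since $p$ is generic, $p'$ avoids every proper Zariski-closed set defined over $\Q$ (it is the image of $p$ under a dominant $\Q$-rational map) and is therefore again generic, so Theorem \ref{thm: Lefschetz} applies to $(\lk(v),p')$ and gives $\dim\Stress^a_i(\st(v),p)=g_i(\lk(v))$ --- here the hypothesis $i\le(d-1)/2$ is exactly what keeps $i$ inside the range of hard Lefschetz for the $(d-2)$-sphere $\lk(v)$. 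As $\st(v)$ is Cohen--Macaulay, $\dim\Stress^\ell_k(\st(v),p)=h_k(\st(v))=h_k(\lk(v))$, and since $\Stress^a_i(\st(v),p)=\ker\big(\partial_c\colon\Stress^\ell_i(\st(v),p)\to\Stress^\ell_{i-1}(\st(v),p)\big)$, the image of $\partial_c$ has dimension $h_i(\lk(v))-g_i(\lk(v))=h_{i-1}(\lk(v))=\dim\Stress^\ell_{i-1}(\st(v),p)$, so $\partial_c$ is onto; the same count, now using hard Lefschetz for the $(d-3)$-spheres $\lk(vw)$ (still in range since $i\le(d-1)/2$), shows $\partial_c$ is onto on stars of edges as well.

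Finishing the crux --- surjectivity of $\partial_c\colon\ker\Sigma_i\to\ker\Sigma_{i-1}$ --- is the step I expect to be the main obstacle. My plan is to localize relations one step further: using the cone lemma together with the partition of unity on the links, every summand $\omega_v$ decomposes as $\omega_v=\sum_{w\,:\,vw\in\Delta}\omega_{vw}$ with $\omega_{vw}\in\Stress^\ell_i(\st(vw),p)\subseteq\Stress^\ell_i(\st(v),p)\cap\Stress^\ell_i(\st(w),p)$, which should express any element of $\ker\Sigma_i$ in terms of ``two-support'' relations attached to edges, each of the form $\mu$ in the slot of $v$ and $-\mu$ in the slot of $w$ with $\mu\in\Stress^\ell_i(\st(vw),p)$; since $\partial_c$ is onto on $\Stress^\ell_i(\st(vw),p)$ and carries such an edge-relation to the corresponding edge-relation one degree lower, it then suffices to know that the edge-relations span $\ker\Sigma_{i-1}$, i.e.\ that the Čech-type complex $\bigoplus_{|F|=2}\Stress^\ell_k(\st(F),p)\to\bigoplus_v\Stress^\ell_k(\st(v),p)\to\Stress^\ell_k(\Delta,p)\to0$ is exact at its middle term for $k=i-1$ (and, to assemble the $\omega_v$ into relations, also for $k=i$) --- a statement I would expect to follow from the Cohen--Macaulayness of $\Delta$ and its links, in the spirit of Adiprasito's partition of unity; genericity of $p$ would then enter only through the rank counts above. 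The small-$d$ base cases, and if necessary the extremal value $i=(d-1)/2$, would be verified directly.
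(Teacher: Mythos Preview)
Your approach is quite different from the paper's own proof of this theorem, which invokes Pachner's theorem and checks, through a five-case analysis of bistellar $j$-flips, that the equality $\Stress^a_i=\sum_v\Stress^a_i(\st(v))$ passes from $\Delta$ to $\Delta'$. Your ``decompose, then correct'' reduction to the surjectivity of $\partial_c:\ker\Sigma_i\to\ker\Sigma_{i-1}$ is clean and correct, as are the Lefschetz rank counts on vertex and edge stars (including the claim that the projected embedding $p'$ is again generic, which does follow since the projection is a dominant $\Q$-rational map). Incidentally, you only need middle exactness at level $k=i-1$: once edge relations span $\ker\Sigma_{i-1}$ and $\partial_c$ is onto on each $\Stress^\ell_i(\st(vw),p)$, every edge relation at level $i-1$ lifts, so $\partial_c:\ker\Sigma_i\to\ker\Sigma_{i-1}$ is onto; the parenthetical ``also for $k=i$'' is not needed.

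The genuine gap is exactly the step you yourself flag: that edge relations span $\ker\Sigma_{i-1}$, i.e., that the \v{C}ech-type complex
\[
\bigoplus_{e}\Stress^\ell_{i-1}(\st(e),p)\to\bigoplus_{v}\Stress^\ell_{i-1}(\st(v),p)\to\Stress^\ell_{i-1}(\Delta,p)\to 0
\]
is exact at the middle. This is true and does, as you guess, follow from Cohen--Macaulayness, but it is not a one-liner. Dually, it is the vanishing of $H^0\big(P^*/(\theta_1,\dots,\theta_d)\big)$ in grading degree $i-1$, where $P^*$ is the partition complex; comparing the two spectral sequences of the double complex $P^*\otimes K^*(\theta_1,\dots,\theta_d)$ --- one collapses because $(\theta_1,\dots,\theta_d)$ is regular on every $P^j$, the other because $H^*(P^*)=\tilde H^*(\Delta)$ is concentrated in grading degree $0$ --- shows that $H^p\big(P^*/(\theta_1,\dots,\theta_d)\big)$ lives in grading degree $d-1-p$, hence vanishes in degree $i-1<d-1$. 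But this is precisely the machinery of the paper's Appendix (Theorem~\ref{thm: general partition of unity}), run with $d$ rather than $d+1$ linear forms. So your outline is sound, yet the step you leave as an expectation is the whole engine; once it is filled in, the PL hypothesis plays no role and you recover the Appendix's stronger statement for all $\Z/2\Z$-homology spheres. The bistellar-flip proof the paper records here is a genuinely different, more hands-on route that really does use PL.
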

	\begin{proof}
		Recall that by Pachner's theorem, any PL $(d-1)$-sphere $\Delta$ can be obtained from the boundary complex of a $d$-simplex by a sequence of bistellar flips. Hence, to prove our statement, it suffices to show that if $\Delta'$ is obtained from $\Delta$ by a $j$-flip (for any $1\leq j\leq d$) and $\Delta$ satisfies the statement of the theorem, then so does $\Delta'$. For the rest of the proof, we let $p$ be a generic embedding of $V(\Delta\cup\Delta')$ and we suppress $p$ from our notation.
		
		By Theorem 13 in \cite{Lee96}, for $1\leq k\leq d$,
		$$\begin{cases}
			\Stress^\ell_k(\Delta')\oplus\Stress^\ell_k(\partial(\overline{A\cup B}))=\Stress^\ell_k(\Delta)& \text{if }j<d-j+1 \quad\text{and}\quad j \leq k\leq d-j, \\
			\Stress^\ell_k(\Delta')=\Stress^\ell_k(\partial(\overline{A\cup B}))\oplus\Stress^\ell_k(\Delta) & \text{if }j>d-j+1 \quad\text{and}\quad d-j+1 \leq k\leq j-1, \\
			\Stress^\ell_k(\Delta')= \Stress^\ell_k(\Delta) & \text{otherwise}. \\
		\end{cases}$$
		For a simplicial complex $\Gamma$ with a face $G$ and a vertex $u$, we write $\omega_{G, \Gamma}$ to denote an affine $i$-stress on $\Gamma$ with $G$ in the support and we write $\omega_{u, G, \Gamma}$ to denote an affine $i$-stress on $\st(u, \Gamma)$ with $G$ in the support. Recall our convention that $c\coloneqq\sum_v x_v$. Since $d\geq 2i$ and since $\Delta$ and $\Delta'$ are PL $(d-1)$-spheres with generic embeddings, Theorem \ref{thm: Lefschetz} implies that the following maps are surjective: $$\Stress^\ell_i(\Delta) \stackrel{\partial c}{\longrightarrow} \Stress^\ell_{i-1}(\Delta),\quad \Stress^\ell_i(\Delta') \stackrel{\partial c}{\longrightarrow} \Stress^\ell_{i-1}(\Delta').$$
		Furthermore, the space $\Stress^\ell_m(\partial(\overline{A\cup B}))$ is $1$-dimensional for all $0\leq m \leq d$, and so the map $\partial c: \Stress^\ell_i(\partial(\overline{A\cup B})) \to \Stress^\ell_{i-1}(\partial(\overline{A\cup B}))$ is an isomorphism. Consequently,
		$$ \begin{cases}
			\Stress^a_i(\Delta')\oplus \Span\{\omega_{A, \Delta}\}=\Stress^a_i(\Delta) & \text{if }i=j,\\
			\Stress^a_i(\Delta')=\Stress^a_i(\Delta)\oplus \Span\{\omega_{B, \Delta'}\} & \text{if }i=d-j+1, \\
			\Stress^a_i(\Delta')=\Stress^a_i(\Delta) & \text{otherwise}. \\
		\end{cases}$$
		
		For a vertex $v\in A$ and $j\neq 1$, $\lk(v, \Delta')$ is obtained from $\lk(v, \Delta)$ by a $(j-1)$-flip $ (\overline{A\backslash v})*\partial \overline{B}\mapsto \partial(\overline{A\backslash v})*\overline{B}$. For a vertex $v\in B$ and $j\neq d$, $\lk(v, \Delta')$ is obtained from $\lk(v, \Delta)$ by a $j$-flip $ \overline{A}*\partial(\overline{B\backslash v})\mapsto \partial \overline{A}* (\overline{B\backslash v})$. Finally for every $v\notin A\cup B$, $\lk(v, \Delta)=\lk(v, \Delta')$. Since vertex links of $\Delta$ and $\Delta'$ are PL $(d-2)$-spheres and $d-1\geq 2i$, the same argument as above (combined with the cone lemma) implies that  
		$$ \begin{cases}
			\Stress^a_i(\st(v,\Delta'))\oplus \Span \{\omega_{v, A\backslash v,\Delta}\}=\Stress^a_i(\st(v,\Delta)) & \text{if }v\in A \quad \text{and}\quad  i=j-1,\\
			\Stress^a_i(\st(v,\Delta'))\oplus \Span \{\omega_{v, A,\Delta}\}=\Stress^a_i(\st(v,\Delta)) & \text{if }v\in B \quad \text{and} \quad i=j,\\
			\Stress^a_i(\st(v,\Delta'))=\Stress^a_i(\st(v,\Delta))\oplus \Span\{\omega_{v, B,\Delta'}\} & \text{if }v\in A \quad \text{and}\quad  i=d-j+1, \\
			\Stress^a_i(\st(v,\Delta'))=\Stress^a_i(\st(v,\Delta))\oplus \Span\{\omega_{v, B\backslash v,\Delta'}\} & \text{if }v\in B \quad \text{and}\quad i=d-j, \\
			\Stress^a_i(\st(v,\Delta')) =\Stress^a_i(\st(v,\Delta)) & \text{otherwise}. \\
		\end{cases}$$
		
		Assume that $A=\{u_1, \dots, u_j\}$ and $B=\{v_1, \dots, v_{d-j+1}\}$. Since the spaces of affine $i$-stresses ($i\geq 2$) of the entire complex and of the vertex stars are not affected by a facet subdivision  or its inverse, we further assume that $j\neq 1$ and $j\neq d$, and so $V(\Delta)=V(\Delta')$. We consider the following five cases:
		
		{\bf Case 1: $i\neq j-1, j, d-j, d-j+1$.} In this case, by our assumptions on $\Delta$,
		$$\Stress^a_i(\Delta')=\Stress^a_i(\Delta)=\sum_{v\in V(\Delta)}\Stress^a_i(\st(v, \Delta))=\sum_{v\in V(\Delta)}\Stress^a_i(\st(v, \Delta')).$$
		
		{\bf Case 2: $i=j$.} To start, we claim that for any $2\leq k\leq d-i+1$, $$\Stress^a_i(\st(v_1,\Delta))+\Stress^a_i(\st(v_k, \Delta))=\Stress^a_i(\st(v_1,\Delta))+\Stress^a_i(\st(v_k, \Delta')).$$
		Recall that for $1\leq k\leq d-i+1$, $\Stress^a_i(\st(v_k, \Delta'))\oplus \Span\{\omega_k\}=\Stress^a_i(\st(v_k,\Delta))$, where $\omega_k$ is any affine $i$-stress on $\st(v_k,\Delta)$ with $A$ in the support. We will now show that for any fixed $2\leq k\leq d-i+1$, we can choose $\omega_1$ and $\omega_k$ to be the same stress. If $d\geq 2i+2$ and $k\geq 2$, then $\lk(v_1v_k,\Delta)$ and $\lk(v_1v_k,\Delta')$ are spheres of dimension $d-3\geq 2i-1$ and since their $(i-1)$-skeleta defer only in $\{A\}$, that is, $\skel_{i-1}(\lk(v_1v_k,\Delta))=\skel_{i-1}(\lk(v_1v_k,\Delta')\cup\{A\})$, Theorem \ref{thm: Lefschetz} (combined with the cone lemma) implies that there exists an affine $i$-stress $\omega$ on $\st(v_1v_k,\Delta)$ that has $A$ in its support. We take $\omega_1$ and $\omega_k$ to be that $\omega$.
		Similarly, if $d=2i+1$, then $\lk(v_1v_k,\Delta)\cup \{B\backslash v_1v_k\}$ and $\lk(v_1v_k, \Delta')\cup \{A\}$ share the same $(i-1)$-skeleton. Hence, by applying Theorem \ref{thm: Lefschetz}, we conclude that there is a nonzero element $\omega$ of  $\Stress^a_i\big(\st(v_1v_k,\Delta)\cup\{B\backslash v_1v_k\}\big) = \Stress^a_i\big(\st(v_1v_k,\Delta')\cup\{A\}\big)$ that has $A$ in its support. Since both $\st(v_1, \Delta)$ and $\st(v_k,\Delta)$ contain the subcomplex $\st(v_1v_k,\Delta)\cup \{B\backslash v_1v_k\}$, we can again take $\omega_1$ and $\omega_k$ to be that $\omega$. Hence 
		\begin{equation*}
			\begin{split}
				\quad  &\Stress^a_i(\st(v_1,\Delta))+\Stress^a_i(\st(v_k, \Delta))\\
				=&\Big(\Stress^a_i(\st(v_1,\Delta'))\oplus \Span \{\omega\}\Big)+\Big(\Stress^a_i(\st(v_k, \Delta'))\oplus\Span\{\omega\}\Big)=\Stress^a_i(\st(v_1,\Delta))+\Stress^a_i(\st(v_k, \Delta')),
			\end{split}
		\end{equation*}
		as desired. Since the above equation holds for all  $2\leq k \leq d-i+1$, we infer that 
		\begin{equation*}
			\begin{split}
				\sum_{k=1}^{d-i+1}\Stress^a_i(\st(v_k,\Delta))=&
				\sum_{k=2}^{d-i+1} \left[\Stress^a_i(\st(v_1,\Delta))+\Stress^a_i(\st(v_k,\Delta))\right]=
				\sum_{k=2}^{d-i+1} \left[\Stress^a_i(\st(v_1,\Delta))+\Stress^a_i(\st(v_k,\Delta'))\right]\\
				=& \ \Stress^a_i(\st(v_1,\Delta))+\sum_{k=2}^{d-i+1} \Stress^a_i(\st(v_k,\Delta')).
			\end{split}
		\end{equation*}
		
		On the other hand, since for all $z\notin B$, $\Stress^a_i(\st(z, \Delta')=\Stress^a_i(\st(z, \Delta))$, we also obtain that 
		\begin{equation*}
			\begin{split}
				& \Stress^a_i(\Delta')\oplus\Span\{\omega_{A, \Delta}\}=\Stress^a_i(\Delta)=\sum_{z\in V(\Delta)} \Stress^a_i(\st(z,\Delta))\\
				=\ & \Stress^a_i(\st(v_1,\Delta))+\sum_{z\in V(\Delta), \, z\neq v_1} \Stress^a_i(\st(z,\Delta'))=\left( \sum_{z\in V(\Delta)} \Stress^a_i(\st(z,\Delta'))\right) \oplus\Span\{\omega_1\}.
			\end{split}
		\end{equation*}
		Since $\omega_{A, \Delta}$ is any affine $i$-stress on $\Delta$ with $A$ in the support, we can take $\omega_{A, \Delta}=\omega_1$. Finally, since $\Stress^a_i(\Delta')\supseteq \sum_{z\in V(\Delta)} \Stress^a_i(\st(z,\Delta'))$, the above equation yields that $\Stress^a_i(\Delta')=\sum_{z\in V(\Delta)} \Stress^a_i(\st(z, \Delta'))$, and so $\Delta'$ satisfies the partition of unity.
		
		{\bf Case 3: $i=d-j+1$.} Then
		\begin{equation*}
			\begin{split}
				&\Stress^a_i(\Delta')=\Stress^a_i(\Delta)+\Span\{\omega_{u_1, B, \Delta'}, \dots, \omega_{u_{d-i+1}, B, \Delta'}\} \\
				= &\sum_{k=1}^{d-i+1}\left( \Stress^a_i(\st(u_k,\Delta)) +\Span\{\omega_{u_k, B, \Delta'}\}\right) + \sum_{z\in V(\Delta)\backslash A} \Stress^a_i(\st(z,\Delta))
				=\sum_{z\in V(\Delta)}\Stress^a_i(\st(z,\Delta')).
			\end{split}
		\end{equation*}
		
		{\bf Case 4: $i=j-1$.} In this case $\skel_{i-1}(\Delta')=\skel_{i-1}(\Delta)$ and hence $\Stress^a_i(\Delta')=\Stress^a_i(\Delta)$. For $1\leq k\leq i+1$, $\Stress^a_i(\st(u_k,\Delta'))\oplus \Span\{\omega_{u_k, A\backslash u_k, \Delta}\}=\Stress^a_i(\st(u_k,\Delta))$. The subcomplex $\st(u_kv_1,\Delta') \cup \{A\backslash u_k\}$ supports a nontrivial affine $i$-stress $\omega_k$ with $A\backslash u_k$ in its support. Since $\st(u_kv_1,\Delta') \cup \{A\backslash u_k\}$ is contained in both $\st(u_k, \Delta)$ and
		$\st(v_1,\Delta')$, it follows that $$\Stress^a_i(\st(u_k,\Delta))\subseteq \Stress^a_i(\st(u_k,\Delta'))+\Stress^a_i(\st(v_1,\Delta')).$$
		
		On the other hand, $\Stress^a_i(\st(z, \Delta'))=\Stress^a_i(\st(z,\Delta))$ for $z\notin A\cup B$. Furthermore, $\Stress^a_i(\st(v_k, \Delta))\subseteq \Stress^a_i(\st(v_k,\Delta'))$ for all $1\leq k\leq d-i$ and $d\geq 2i+1$. Hence we conclude that $$\Stress^a_i(\Delta')=\Stress^a_i(\Delta)=\sum_{z\in V(\Delta)} \Stress^a_i(\st(z, \Delta))\subseteq \sum_{z\in V(\Delta')} \Stress^a_i(\st(z, \Delta')).$$ 
		The partition of unity in this case follows since $\Stress^a_i(\Delta')\supseteq \sum_{z\in V(\Delta')} \Stress^a_i(\st(z, \Delta'))$ always holds.
		
		{\bf Case 5: $i=d-j$.} The proof is similar to case 4. The only difference is that we switch the roles of $u_k$ and $v_k$, and obtain that $\Stress^a_i(\st(u_k, \Delta))\subseteq \Stress^a_i(\st(u_k,\Delta'))$ for $1\leq k\leq j=d-i$,  and $\Stress^a_i(\st(v_k, \Delta))\subseteq \Stress^a_i(\st(v_k,\Delta'))+\Stress^a_i(\st(u_1, \Delta'))$ for $1\leq k\leq d-j+1$.  
	\end{proof} 
	
	In view of Theorem \ref{lm: general partition of unity, PL}, we ask if the partition of unity of affine stresses also holds for simplicial polytopes with natural embeddings and (non-PL) simplicial spheres with generic embeddings.
	
	\begin{conjecture}\label{conj: general partition of unity}
		Let $2 \leq i\leq (d-1)/2$. Let $(\Delta, p)$ be either the boundary complex of a simplicial $d$-polytope with its natural embedding $p$, or a $\Z/2\Z$-homology $(d-1)$-sphere with a generic embedding $p$. Then $\Stress^a_i(\Delta,p)=\sum_{v\in V(\Delta)} \Stress^a_i(\st(v), p)$.
	\end{conjecture}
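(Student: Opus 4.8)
\medskip\noindent\textbf{Proof strategy.} The plan is to bootstrap this from the linear partition of unity (Theorem~\ref{lm: partition of unity}) and the hard Lefschetz property (Theorem~\ref{thm: Lefschetz}) by a snake-lemma argument, in the spirit of part~2 of Theorem~\ref{lm: partition of unity}. After translating $P$ so that the origin lies in its interior when $\Delta=\partial P$ (which does not change affine stress spaces), the embedding $p$ makes the $p$-images of the vertices of every facet linearly independent, so Theorem~\ref{lm: partition of unity}(1) applies and the ``star-sum'' maps
$$\Sigma_k\colon\ \bigoplus_{v\in V(\Delta)}\Stress^\ell_k(\st(v),p)\ \longrightarrow\ \Stress^\ell_k(\Delta,p),\qquad (\sigma_v)_v\mapsto\sum_v\sigma_v,$$
are surjective for $k=i$ and $k=i-1$. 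Since $i\le(d-1)/2$, Theorem~\ref{thm: Lefschetz} gives that $\partial c\colon\Stress^\ell_i(\Delta,p)\to\Stress^\ell_{i-1}(\Delta,p)$ is onto with kernel $\Stress^a_i(\Delta,p)$. Moreover, each link $\lk(v)$ is again a polytope boundary, resp.\ a generic $\Z/2\Z$-homology $(d-2)$-sphere, so by the cone lemma and Theorem~\ref{thm: Lefschetz} applied to $\lk(v)$, together with the count $\dim\Stress^\ell_i(\st(v),p)-\dim\Stress^a_i(\st(v),p)=h_i(\lk v)-g_i(\lk v)=h_{i-1}(\lk v)=\dim\Stress^\ell_{i-1}(\st(v),p)$, the map $\partial c\colon\Stress^\ell_i(\st(v),p)\to\Stress^\ell_{i-1}(\st(v),p)$ is onto with kernel $\Stress^a_i(\st(v),p)$. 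Since the $\Sigma_k$ commute with $\partial c$, we obtain a map of short exact sequences, from
$$0\to\bigoplus_v\Stress^a_i(\st(v),p)\to\bigoplus_v\Stress^\ell_i(\st(v),p)\xrightarrow{\ \partial c\ }\bigoplus_v\Stress^\ell_{i-1}(\st(v),p)\to0$$
to its analogue for $\Delta$, with vertical maps induced by $\Sigma_i$ and $\Sigma_{i-1}$ ($\Sigma_i$ restricting to a map $\bigoplus_v\Stress^a_i(\st(v),p)\to\Stress^a_i(\Delta,p)$).

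Since $\Sigma_i$ and $\Sigma_{i-1}$ are surjective, the snake lemma yields
$$\mathrm{coker}\Big(\textstyle\bigoplus_v\Stress^a_i(\st(v),p)\xrightarrow{\ \Sigma_i\ }\Stress^a_i(\Delta,p)\Big)\ \cong\ \ker\Sigma_{i-1}\big/(\partial c)\big(\ker\Sigma_i\big),$$
where $\ker\Sigma_k=\{(\tau_v)_v : \tau_v\in\Stress^\ell_k(\st(v),p),\ \sum_v\tau_v=0\}$ is the space of linear relations among the degree-$k$ vertex-star decompositions. Thus the partition of unity for affine $i$-stresses is \emph{equivalent} to the surjectivity of $\partial c\colon\ker\Sigma_i\to\ker\Sigma_{i-1}$: every relation among the degree-$(i-1)$ vertex-star decompositions must be the image, under $\partial c$, of a relation among the degree-$i$ ones.

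To establish this surjectivity, I would aim for a ``first-syzygy'' strengthening of Theorem~\ref{lm: partition of unity}: that the \v{C}ech-type complexes $\bigoplus_{u<v}\Stress^\ell_k(\st(u)\cap\st(v),p)\to\bigoplus_v\Stress^\ell_k(\st(v),p)\to\Stress^\ell_k(\Delta,p)\to0$ are exact for $k=i$ and $k=i-1$ and fit into a short exact sequence of complexes under $\partial c$; a diagram chase over these then lifts a relation in degree $i-1$ to one in degree $i$. A likely more robust route is an induction on $\dim\Delta$ through the vertex links (again polytope boundaries or generic homology spheres, of one smaller dimension), with the argument splitting into cases according to where $i$ lies relative to the critical degree $\lfloor(d-1)/2\rfloor$, exactly as in the five-case analysis in the proof of Theorem~\ref{lm: general partition of unity, PL}; in the polytope case one might instead induct along a shelling of $\partial P$, tracking affine stresses through the intermediate shellable balls via a relative hard Lefschetz.

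I expect the real difficulty to lie precisely in this control of the relations $\ker\Sigma_k$. The pairwise intersections $\st(u)\cap\st(v)$ strictly contain $\st(uv)$ whenever $uv\notin\Delta$ and need not be Cohen--Macaulay, so Lefschetz is not directly available for them; and one cannot retreat to a more convenient cover, because the affine partition of unity is genuinely sensitive to the choice of cover --- for instance, for the cover of $\Delta$ by stars of $(d-i-1)$-faces every summand $\Stress^a_i(\st(G),p)$ vanishes, so that cover fails outright even though its linear analogue holds. This is why vertex stars (or stars of faces of bounded dimension) are essential, and why the \emph{hard} Lefschetz theorem over $\R$ --- not merely the weak Lefschetz used in the reduction above --- should be needed both to handle the intermediate objects in the syzygy argument and to close the induction at the critical degree, paralleling its role in the proof of Theorem~\ref{lm: general partition of unity, PL}.
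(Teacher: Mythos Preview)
Your snake-lemma reduction is correct and is essentially the dual of the paper's starting point: the affine partition of unity is equivalent to the surjectivity of $\partial c\colon \ker\Sigma_i\to\ker\Sigma_{i-1}$. But the proposal stops there. You do not prove this surjectivity; you only sketch two possible attacks (a \v{C}ech-type syzygy argument and an induction on dimension) and then, quite honestly, explain why the first one runs into trouble---pairwise intersections $\st(u)\cap\st(v)$ with $uv\notin\Delta$ need not be Cohen--Macaulay, so there is no Lefschetz control on them. That is the genuine gap: the hard step is exactly the one left undone.

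The paper's proof (in the Appendix, adapting \cite[Theorem~33]{AdiprasitoYashfe}) resolves this by replacing your \v{C}ech complex over pairwise intersections with the \emph{partition complex} $P^*$, whose $j$-th term is $\bigoplus_{\sigma\in\Delta^{(j)}}\R[\st(\sigma)]$, i.e., stars of \emph{faces} rather than of intersections. This complex is known to be exact in all positive internal degrees \cite[Proposition~26]{AdiprasitoYashfe}, which completely sidesteps the non-CM-intersection obstacle you identified. The paper then tensors $P^*$ with the \emph{augmented Koszul complex} for all $d+1$ forms $\theta_1,\ldots,\theta_{d+1}$ (not just the first $d$) and runs the spectral sequence of the resulting double complex in both directions. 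Hard Lefschetz on $\Delta$ and on each $\st(\tau)$ forces the relevant vertical cohomologies to vanish, while exactness of $P^*$ handles the horizontal direction; comparing the two computations of $H^d$ of the total complex shows that the map $\big(\R[\Delta]/\Theta\big)_i\to\bigoplus_v\big(\R[\st(v)]/\Theta\big)_i$ is injective, which is dual to the claim you reduced to. So your instinct that a syzygy-level strengthening of Theorem~\ref{lm: partition of unity} is needed was right; the missing idea is to index by faces instead of intersections and to package the $\partial c$-compatibility into the Koszul complex for the full $\Theta$.
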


	\begin{remark}
After seeing the previous version of this paper, Adiprasito, Murai, and the anonymous referee commented that Conjecture \ref{conj: general partition of unity} can be proved by adapting the spectral sequence argument of the proof of Theorem 33 from \cite{AdiprasitoYashfe}. We feel that this proof is beyond the scope of this paper, and so instead of presenting it here, we sketch the main steps and ideas in the Appendix. 
\end{remark}

\section{A warm-up: reconstructing from affine 2-stresses}
	In this section we prove the following theorem (cf.~part 1 of Conjecture \ref{conj: generalization of Kalai's conjecture}). 
	\begin{theorem}\label{thm: i=2}
		Let $d\geq 4$. Let $\Delta$ be either (i) a normal $(d-1)$-pseudomanifold without boundary with a generic embedding $p$, or (ii) the boundary complex $\partial P$ of a simplicial d-polytope $P$ with its natural embedding $p$. In both cases, assume also that $\Delta$ has no missing faces of dimension $\geq d-2$. Then $\Span\{\partial_{x_v} \omega: \omega\in\Stress^a_2(\Delta,p), \, v\in V(\Delta)\}=\Stress^a_1(\Delta,p)$, and so the space $\Stress^a_2(\Delta, p)$ determines the space $\Stress^a_1(\Delta, p)$. In particular,  $\Stress^a_2(\Delta,p)$ determines the positions of vertices of $\Delta$ up to affine equivalence.
	\end{theorem}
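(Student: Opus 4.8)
The plan is to prove the displayed equality $\Span\{\partial_{x_v}\omega:\omega\in\Stress^a_2(\Delta,p),\ v\in V(\Delta)\}=\Stress^a_1(\Delta,p)$, since the remaining assertions are then formal. Each $\partial_{x_v}\omega$ is an affine $1$-stress supported on $\st(v,\Delta)$, so the inclusion ``$\subseteq$'' is immediate; and once $\Stress^a_2(\Delta,p)$ is shown to determine $\Stress^a_1(\Delta,p)$, the statement about the positions of the vertices follows from the fact recalled in Section~3.1 that $\Stress^a_1(\Delta,p)$ is exactly the space of affine dependences of $p$, hence pins down $p$ up to an invertible affine transformation. So everything reduces to the inclusion $\Stress^a_1(\Delta,p)\subseteq\Span\{\partial_{x_v}\omega\}$.

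The first step is the partition of unity of affine $1$-stresses: by Theorem~\ref{lm: decomposition of affine dependency} in case~(i) (a generic embedding has the required property that the $p$-images of any two adjacent facets are affinely independent) and by Corollary~\ref{cor: decomposition of affine dependency} in case~(ii), we have $\Stress^a_1(\Delta,p)=\sum_{v\in V(\Delta)}\Stress^a_1(\st(v),p)$, so it suffices to prove $\Stress^a_1(\st(v),p)\subseteq\Span\{\partial_{x_w}\omega\}$ for each $v$. Write $W_v$ for the subspace of $\Stress^a_1(\st(v),p)$ of affine $1$-stresses with vanishing coefficient at $x_v$; equivalently $W_v$ is the space of affine dependences of the images of the neighbours of $v$, and it has codimension at most $1$ in $\Stress^a_1(\st(v),p)$. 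The operator $\partial_{x_v}$ maps $\Stress^a_2(\Delta,p)$ into $W_v$ with kernel $\Stress^a_2(\Delta-v,p)$, so the heart of the matter is the claim (KEY): \emph{for every vertex $v$, $\partial_{x_v}\colon\Stress^a_2(\Delta,p)\to W_v$ is onto}, i.e.\ every affine dependence of $p(V(\lk(v,\Delta)))$ extends to an affine $2$-stress of $(\Delta,p)$.

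I would prove (KEY) by a framework-rigidity argument. By the classical infinitesimal rigidity of simplicial polytopes with their natural embeddings (case~(ii)) and by Fogelsanger's (and Kalai's) generic rigidity of simplicial spheres and normal pseudomanifolds (case~(i)), $(\Delta,p)$ is infinitesimally rigid in $\R^d$; together with Theorem~\ref{thm: Lefschetz} this gives $\dim\Stress^a_2(\Delta,p)=g_2(\Delta)$, and since the affine $2$-stress space sits inside the equilibrium self-stress space of $(\Delta,p)$ and both have dimension $g_2(\Delta)$, they coincide. Given an affine dependence $(c_u)$ of $p(V(\lk(v)))$, assign weight $c_u$ to each edge $vu$; a short computation using $\sum_u c_u p(u)=0$ and $\sum_u c_u=0$ shows the resulting load $-c_u(p(v)-p(u))$ at $u$ is orthogonal to every infinitesimal rigid motion of $\R^d$. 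Hence, provided the antistar $\Delta-v$ is infinitesimally rigid in $\R^d$, this load is resolved by bar tensions on the edges of $\Delta-v$, which together with the weights $c_u$ through $v$ give an equilibrium self-stress of $(\Delta,p)$, hence an affine $2$-stress $\omega$ with $\partial_{x_v}\omega=\sum_u c_u x_u$. Thus (KEY) reduces to: \emph{if $\Delta$ has no missing faces of dimension $\geq d-2$, then $\Delta-v$ is infinitesimally rigid in $\R^d$ for every vertex $v$}, and I expect this to be the main obstacle. This is exactly where the missing-faces hypothesis is used: for a stacked polytope the antistar of a high-degree vertex is flexible and the theorem genuinely fails, and it is the absence of missing faces of dimensions $d-1$ and $d-2$ that prevents $\Delta-v$ from being ``split'' along the boundary of a small simplex. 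I would establish it via Fogelsanger-type cycle/edge-contraction together with vertex-splitting and gluing lemmas in the spirit of Kalai and Whiteley (and, in case~(ii), results on rigidity of convex simplicial complexes to descend from generic to natural embeddings).

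Finally, to upgrade from $W_v$ to all of $\Stress^a_1(\st(v),p)$ --- i.e.\ to reach affine $1$-stresses of $\st(v)$ with nonzero $x_v$-coefficient, which is a codimension-$\le 1$ issue --- I would use a neighbour $u\sim v$. Then $v\in V(\lk(u))$, and (by convexity in case~(ii) and by genericity in case~(i)) $p(v)$ lies in the affine hull of the images of the remaining vertices of $\lk(u)$, so the space of affine dependences of $p(V(\lk(u)))$ contains one, $b$, with nonzero coefficient at $v$, supported on $\{v\}\cup V(\lk(uv,\Delta))\subseteq V(\st(v))$. By (KEY) applied at $u$, $b=\partial_{x_u}\omega$ for some $\omega\in\Stress^a_2(\Delta,p)$; subtracting the appropriate multiple of $b$ from a given $a\in\Stress^a_1(\st(v),p)$ cancels its $x_v$-coefficient and places it in $W_v$. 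Hence $a\in W_v+W_u\subseteq\Span\{\partial_{x_w}\omega\}$, and summing over $v$ gives $\Stress^a_1(\Delta,p)=\sum_v\Stress^a_1(\st(v),p)\subseteq\Span\{\partial_{x_w}\omega\}$, completing the proof. The cone lemma (Lemma~\ref{cone-lemma1}) is the convenient tool for relating stress spaces on $\st(v)=v*\lk(v)$ to the vertex figure and for the affine-hull step above.
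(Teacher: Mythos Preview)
Your overall strategy --- partition of unity for $\Stress^a_1$ plus surjectivity of $\partial_{x_v}$, the latter resting on infinitesimal $d$-rigidity of $(\Delta-v,p)$ --- is exactly the paper's, and the antistar rigidity you flag as ``the main obstacle'' is precisely Lemma~\ref{lm: inf rigidity on antistars}. (The paper proves it by writing $\Delta-F$ as a union of stars of minimal interior faces, each of dimension $\leq d-4$ by the missing-face hypothesis and Lemma~\ref{lm:min-int-faces-in-antistars}, and then gluing; your sketch is vaguer but points in the same direction.)

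The error is the claim that $\partial_{x_v}$ lands in $W_v$. It does not: for $\omega\in\Stress^a_2(\Delta,p)$ the coefficient of $x_v$ in $\partial_{x_v}\omega$ is $2\omega_{x_v^2}$, and the affine-stress equations force $2\omega_{x_v^2}=-\sum_{u\sim v}\omega_{uv}$, which need not vanish. The correct target is all of $\Stress^a_1(\st(v),p)$. Once you fix this, the ``upgrade'' paragraph becomes unnecessary: the exact sequence
\[
0\to\Stress^a_2(\Delta-v,p)\to\Stress^a_2(\Delta,p)\xrightarrow{\ \partial_{x_v}\ }\Stress^a_1(\st(v),p),
\]
together with $\dim\Stress^a_2(\Delta-v,p)=g_2(\Delta-v)$ (from antistar rigidity) and $g_2(\Delta)=g_2(\Delta-v)+g_1(\lk(v))$, shows $\partial_{x_v}$ is onto $\Stress^a_1(\st(v),p)$ directly. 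This is the paper's argument.

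As written, the upgrade step also has a genuine gap: you need $b\in W_u$ with $b_v\neq0$ supported on $\{v\}\cup V(\lk(uv))$, i.e., $p(v)\in\operatorname{aff}\{p(w):w\in V(\lk(uv))\}$. But $\lk(uv)$ is a $(d-3)$-pseudomanifold and can have as few as $d-1$ vertices; under a generic embedding these span a $(d-2)$-flat not containing $p(v)$. For instance, take $\Delta=\partial\C^*_4$ with a generic $4$-embedding: here $\dim W_v=1<2=\dim\Stress^a_1(\st(v),p)$, so the upgrade is genuinely needed, yet $|V(\lk(uv))|=4$ for every neighbour $u$ and no such $b$ exists.
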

	We remark that for the case of normal pseudomanifolds with generic embeddings, Theorem 8.3 in \cite{CJT} establishes a more general result: it shows that $\Stress^a_2(\Delta,p)$ determines $\Stress^a_1(\Delta,p)$ as long as $\Delta$ has no missing faces of dimension $d-1$. The result on polytopes with natural embeddings is new. Our proof relies on the partition of unity of affine 1-stresses (Theorem \ref{lm: decomposition of affine dependency}) as well as on, by now, standard tools from the rigidity theory of frameworks. Specifically, we rely on the cone and gluing lemmas \cite[Section 6]{Lee-notes} and on works of Fogelsanger \cite{Fogelsanger88} and Whiteley \cite{Whiteley-84}.

	While we refer the reader to \cite{AsimowRothI,AsimowRothII,Kalai87,Lee-notes} for a thorough introduction to the rigidity theory and all undefined terminology, we briefly summarize some necessary background here. To start, for a $d$-framework $(\Gamma,p)$, we somewhat abuse notation and let $g_2(\Gamma)\coloneqq f_1(\Gamma)-df_0(\Gamma)+\binom{d+1}{2}$. Recall that a $d$-framework $(\Gamma,p)$ that affinely spans $\R^d$ is infinitesimally $d$-rigid if and only if $\dim \Stress^a_2(\Gamma,p)=g_2(\Gamma)$. In particular, for $d\geq 3$, the graph of a simplicial $d$-polytope with its natural embedding is infinitesimally $d$-rigid. This follows from a much more general result of Whiteley \cite{Whiteley-84}. (Of course, this also follows from the $g$-theorem for polytopes.) Furthermore, if $d\geq 3$ and $(\Delta,p)$ is a normal $(d-1)$-pseudomanifold with a generic embedding, then by a result of Fogelsanger \cite{Fogelsanger88},  $(\Delta,p)$ is infinitesimally $d$-rigid. Fogelsanger's result, in fact, applies to a more general class of  {\em simplicial $(d-1)$-circuits} and even more general class of {\em minimal cycles}. A self-contained summary of Fogelsanger's proof is given in \cite[Section 3]{CJT}.

We also recall the statement of the gluing lemma. It asserts that if $(\Gamma,p)$ is a $d$-framework and $\Gamma_1$ and $\Gamma_2$ are subgraphs of $\Gamma$ such that (a) both $(\Gamma_1,p)$ and $(\Gamma_2,p)$ are infinitesimally $d$-rigid, and (b) the $p$-image of $V(\Gamma_1)\cap V(\Gamma_2)$ contains $d$ affinely independent points, then $(\Gamma_1\cup \Gamma_2,p)$ is also infinitesimally $d$-rigid.
	
	The key to our proof of Theorem \ref{thm: i=2} is the following lemma. Some special cases of this lemma are known, see the proof of \cite[Proposition 2.10]{NevoNovinsky}. 
	\begin{lemma}\label{lm: inf rigidity on antistars}
		Let $d$ and $(\Delta, p)$ be as in Theorem \ref{thm: i=2}. Then for every nonempty face $F\in \Delta$, $(\Delta-F, p)$ is an infinitesimally $d$-rigid framework. In particular, $\dim\Stress^a_2(\Delta-F, p)=g_2(\Delta-F)$.
	\end{lemma}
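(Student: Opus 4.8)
The plan is to reduce to the case $F=\{v\}$ and then to prove that $(\Delta-v,p)$ is infinitesimally $d$-rigid; the statement about $\Stress^a_2$ is then immediate, since an infinitesimally $d$-rigid $d$-framework $(\Gamma,p)$ that affinely spans $\R^d$ has $\dim\Stress^a_2(\Gamma,p)=g_2(\Gamma)$.

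First the reductions. If $|F|\ge 3$, then $\Delta-F$ has the same graph as $\Delta$ (no vertex or edge of $\Delta$ lies inside $F$), so the assertion is just the infinitesimal $d$-rigidity of $(\Delta,p)$, which holds by Whiteley \cite{Whiteley-84} in case (ii) and by Fogelsanger \cite{Fogelsanger88} in case (i). Next I would record that, under our hypothesis, every vertex $u$ of $\Delta$ has degree at least $d+1$: if $\deg_\Delta(u)=d$, then $\lk(u)=\partial\overline{W}$ for the $d$-set $W$ of neighbors of $u$, and then $W$ (if $W\notin\Delta$) or $W\cup\{u\}$ (if $W\in\Delta$) is a missing face of $\Delta$ of dimension $\ge d-2$, a contradiction. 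Consequently, once the vertex case is known, for $F=\{v,w\}$ the framework $(\Delta-F,p)$ is obtained from $(\Delta-v,p)$ by reattaching $v$ together with the at least $d$ edges joining it to the vertices of $\lk(v)$ other than $w$; since the relevant neighbor images are in general position, a standard $0$-extension (vertex addition) argument shows $(\Delta-F,p)$ is infinitesimally $d$-rigid. So it remains to treat $F=\{v\}$.

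For $F=\{v\}$: $\Delta-v$ is a normal $(d-1)$-pseudomanifold with boundary $\partial(\Delta-v)=\lk(v)$ (in case (ii), $\Delta-v=\partial P-v$ has $\partial(\Delta-v)=\lk(v,\partial P)=\partial(P/v)$); in particular $\Delta-v$ is pure and strongly connected. An argument in the spirit of Lemma \ref{lm:min-int-faces-in-antistars} shows that $\lk(v)$ also has no missing faces of dimension $\ge d-2$: a missing face $\sigma$ of $\lk(v)$ with $|\sigma|\ge d-1$ produces a missing face $\sigma\cup\{v\}$ or $\sigma$ of $\Delta$ of dimension $\ge d-2$, according to whether $\sigma\in\Delta$. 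By purity, $\Delta-v=\bigcup_{w\ne v}\st(w,\Delta-v)$, and each such star is a cone with apex $w$ over $\lk(w,\Delta)$ if $vw\notin\Delta$, and over the vertex-antistar $\lk(w,\Delta)-v$ if $vw\in\Delta$. In the first case the base is a closed $(d-2)$-pseudomanifold (polytope boundary), generically infinitesimally $(d-1)$-rigid by Fogelsanger/Whiteley since $d-2\ge 2$; in the second it is itself a vertex-antistar in such a complex, whose infinitesimal $(d-1)$-rigidity amounts to the lemma in dimension $d-1$. Either way the cone lemma for rigidity \cite[Section 6]{Lee-notes} yields that $(\st(w,\Delta-v),p)$ is infinitesimally $d$-rigid. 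Finally, one orders the vertices $w_1,w_2,\dots$ of $\Delta-v$ so that each $w_{k+1}$ is joined by an edge of $\Delta-v$ to an earlier $w_j$ (possible since the graph of $\Delta-v$ is connected) and glues the stars one at a time: $\st(w_j,\Delta-v)$ and $\st(w_{k+1},\Delta-v)$ overlap in the $(d-1)$-dimensional complex $\st(w_jw_{k+1},\Delta-v)$, whose vertex images contain $d$ affinely independent points (the vertices of any of its facets), so the gluing lemma keeps the union infinitesimally $d$-rigid. Hence $(\Delta-v,p)$ is infinitesimally $d$-rigid.

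The step I expect to be the main obstacle is the appeal to "the lemma in dimension $d-1$": the condition of having no missing faces of dimension $\ge d-2$ does not pass cleanly to vertex links, since a link can acquire a missing face of dimension one below its top dimension. Thus the induction on $d$ needs a careful setup, with a base case $d=4$ (where every vertex link is a flag $2$-sphere and one invokes the classical infinitesimal rigidity of triangulated $2$-spheres) and an inductive step that uses only the weaker fact that the relevant links have no missing facets — which is precisely what a Fogelsanger-type rigidity statement for a normal pseudomanifold with boundary (whose boundary has no missing facets) requires; see the self-contained treatment of the Fogelsanger decomposition in \cite[Section 3]{CJT}. One also uses throughout the standard fact that a generic embedding of $\Delta$ restricts on each link to an embedding that is generic enough for the link's rigidity matroid to have full rank.
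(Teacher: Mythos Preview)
Your strategy of covering $\Delta-v$ by the vertex stars $\st(w,\Delta-v)$ and showing each is infinitesimally $d$-rigid cannot succeed, already in the base case $d=4$. There the hypothesis makes $\Delta$ flag; for any edge $vw$ the link $\lk(vw,\Delta)$ is then a cycle of length $\ge 4$, so $\deg_{\lk(w,\Delta)}(v)\ge 4$. Writing $n'=f_0(\lk(w,\Delta))$, the cone $\st(w,\Delta-v)=w*\big(\lk(w,\Delta)-v\big)$ has $n'$ vertices but only
\[
\big(3n'-6-\deg_{\lk(w,\Delta)}(v)\big)+(n'-1)\ \le\ 4n'-11
\]
edges, one short of the $4n'-10$ needed for infinitesimal $4$-rigidity. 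So for every $w$ adjacent to $v$ the piece $\st(w,\Delta-v)$ is flexible, and the gluing lemma yields nothing. Your proposed base case (``invoke rigidity of triangulated $2$-spheres'') targets the wrong object: what is needed is rigidity of the \emph{disk} $\lk(w,\Delta)-v$, not of the sphere $\lk(w,\Delta)$. The same edge count shows that your suggested weakening of the inductive hypothesis to ``links have no missing facets'' is insufficient --- that hypothesis is satisfied here and still the pieces fail to be rigid.

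The paper's proof avoids this by covering $\Delta-F$ (for arbitrary $F$, with no reduction to $|F|=1$) by stars of \emph{minimal interior faces} of $\Delta-F$ rather than by stars of vertices. Lemma~\ref{lm:min-int-faces-in-antistars} combined with the missing-face hypothesis forces every $\sigma\in\I(\Delta-F)$ to satisfy $\dim\sigma\le d-4$, so $\lk(\sigma,\Delta)$ is a closed pseudomanifold (respectively, a polytope boundary) of dimension $\ge 2$; Fogelsanger/Whiteley plus the cone lemma then give infinitesimal $d$-rigidity of $\st(\sigma,\Delta)$ directly, with no induction on $d$. These stars cover $\Delta-F$ (every facet of $\Delta-F$ contains a minimal interior face), and a connectivity argument along the facet--ridge structure of $\Delta-F$ shows one can glue them in order, each new star meeting the union so far in $d$ affinely independent points.
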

	\begin{proof}
		Throughout this proof, all stars and links are computed in $\Delta$. The complex $\Delta-F$ is a normal pseudomanifold with boundary. First we claim that every minimal interior face of $\Delta-F$ has dimension $\leq d-4$. Indeed, if $\sigma\in\I(\Delta-F)$, then by Lemma \ref{lm:min-int-faces-in-antistars}, there exists $H\subseteq F$ such that $\sigma\cup H$ is a missing face of $\Delta$. Thus, $d-3\geq \dim(\sigma\cup H)>\dim(\sigma)$, and so $\dim\sigma\leq d-4$ as claimed.  This implies that for every $\sigma\in\I(\Delta-F)$, $(\st(\sigma), p)$ is infinitesimally $d$-rigid: in the case that $\Delta$ is a normal pseudomanifold, this follows from Fogelsanger's result \cite{Fogelsanger88} and the cone lemma, while in the case that $\Delta$ is the boundary complex of a polytope, this follows from Whiteley's result \cite{Whiteley-84} and the cone lemma.
		
		Define $K=\bigcup\{\st(\sigma): \sigma\in  \I(\Delta-F)\}$. Clearly, $K\subseteq \Delta -F$. On the other hand, every facet $T$ of $\Delta-F$ is an interior face. Hence $T$ contains a minimal interior face. It follows that $\overline{T}\subseteq K$, and so by purity, $K=\Delta-F$. We conclude that $(\Delta-F, p)$ can be expressed as the union of infinitesimally $d$-rigid frameworks. 
		
		The claim that $(\Delta-F, p)$ is infinitesimally $d$-rigid now follows from repeated applications of the gluing lemma.	Indeed, observe that if $\sigma$ and $\tau$ are elements of $\I(\Delta-F)$ such that $\sigma\cup \tau\in \Delta$, then $\st(\sigma)\cap \st(\tau)\supseteq \st(\sigma\cup \tau)$, where $\st(\sigma\cup \tau)$ contains $d$ vertices of a facet. These $d$ vertices are affinely independent in $\R^d$, and so by the gluing lemma, $(\st(\sigma)\cup \st(\tau), p)$ is infinitesimally $d$-rigid. Thus, to complete the proof, it remains to show that the following graph $\G$ is connected: the vertices of $\G$ correspond to elements of $\I(\Delta-F)$,  and we put an edge between $\sigma$ and $\tau$ if $\sigma\cup \tau\in \Delta$. To see that $\G$ is connected, let $\sigma$ and $\tau$ be elements of $\I(\Delta-F)$. Then there exist facets $H_\sigma$ and $H_\tau$ of $\Delta -F$ that contain $\sigma$ and $\tau$, respectively. Since $\Delta-F$ is strongly connected, we can walk from $H_\sigma$ to $H_\tau$ along a path of facets in $\Delta-F$: $H_\sigma=H^0, H^1, \dots, H^\ell=H_\tau$, such that $H^i\cap H^{i+1}$ is a common ridge of both $H^i$ and $H^{i+1}$. Then $H^i\cap H^{i+1}$ contains a minimal interior face; denote it by $\sigma_{i+1}$. This gives us a sequence $S=(\sigma_0\coloneqq\sigma, \sigma_1, \dots, \sigma_{\ell}, \sigma_{\ell+1}\coloneqq\tau)$ of elements of $\I(\Delta-F)$, where for every $i$, $\sigma_i$ and $\sigma_{i+1}$ are contained in the facet $H^i$. Thus, either $\sigma_i=\sigma_{i+1}$, or $\sigma_i$ and $\sigma_{i+1}$ are connected by an edge in $\G$. In other words, $S$ is a walk from $\sigma$ to $\tau$ in $G$, and so $\G$ is connected. 
	\end{proof}
	
	The proof of Theorem \ref{thm: i=2} now follows in the same spirit as the proof of part 2 of Theorem \ref{lm: partition of unity}.
	
	\smallskip\noindent {\it Proof of Theorem \ref{thm: i=2}: \ }
		It suffices to show that $\{\partial_{x_v}w: w\in \Stress^a_2(\Delta, p), \, v\in V(\Delta)\}=\Stress^a_1(\Delta, p)$. First observe that the sequence $$0\to \Stress^a_2(\Delta-v, p) \to \Stress^a_2(\Delta, p) \stackrel{\partial_{x_v}}{\to} \Stress^a_1(\st(v),p)$$ is exact. Now, whether $\Delta$ is a normal pseudomanifold or the boundary of a polytope, $\dim \Stress^a_2(\Delta, p)=g_2(\Delta)$ and $\dim \Stress^a_1(\st(v), p)=g_1(\lk(v))$. Also, by Lemma \ref{lm: inf rigidity on antistars}, $\dim \Stress^a_2(\Delta-v, p)=g_2(\Delta-v)$. Since $g_2(\Delta)=g_2(\Delta-v)+g_1(\lk(v))$, we conclude that the map $\partial_{x_v}: \Stress^a_2(\Delta, p)\to \Stress^a_1(\st(v),p)$ is onto, and hence that  $\{\partial_{x_v}w: w\in \Stress^a_2(\Delta), \, v\in V(\Delta)\}=\sum_{v\in V(\Delta)}\Stress^a_1(\st(v), p)$. Our claim then follows from Theorem \ref{lm: decomposition of affine dependency}.
		
		The ``in particular" part also follows since the space of affine dependencies of the multiset $p(V(\Delta))$ that affinely spans $\R^d$, determines the multiset itself up to affine equivalence.
	\endproof
	
	We end this section with a corollary to our results. The second part verifies a special case of Conjecture \ref{conj: the support of stresses}.
	
	\begin{corollary}\label{cor: affine 2-stresses}
		Let $d$ and $(\Delta, p)$ be as in Theorem \ref{thm: i=2}. 
		\begin{enumerate}
			\item For any vertex $v\in V(\Delta)$, $g_2(\Delta)\geq g_1(\lk(v))$.
			\item Every edge of $\Delta$ participates in some affine 2-stress on $\Delta$.
		\end{enumerate}
	\end{corollary}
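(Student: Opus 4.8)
The plan is to read off both parts from Lemma~\ref{lm: inf rigidity on antistars} together with the surjectivity of the map $\partial_{x_v}\colon \Stress^a_2(\Delta,p)\to\Stress^a_1(\st(v),p)$ that was established in the course of proving Theorem~\ref{thm: i=2}; no fresh geometric input is required.

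\emph{Part (1).} I would invoke the exact sequence $0\to \Stress^a_2(\Delta-v,p)\to \Stress^a_2(\Delta,p)\stackrel{\partial_{x_v}}{\to}\Stress^a_1(\st(v),p)$ from the proof of Theorem~\ref{thm: i=2}, whose right-hand map is onto, to get $\dim \Stress^a_2(\Delta,p)=\dim \Stress^a_2(\Delta-v,p)+\dim\Stress^a_1(\st(v),p)$. In that proof we also saw that $\dim\Stress^a_2(\Delta,p)=g_2(\Delta)$ (by Theorem~\ref{thm: Lefschetz} in the polytope case and by Fogelsanger's theorem in the pseudomanifold case) and that $\dim\Stress^a_1(\st(v),p)=g_1(\lk(v))$, while Lemma~\ref{lm: inf rigidity on antistars} supplies $\dim\Stress^a_2(\Delta-v,p)=g_2(\Delta-v)\ge 0$. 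Combining these gives $g_2(\Delta)=g_2(\Delta-v)+g_1(\lk(v))\ge g_1(\lk(v))$. (Equivalently, $g_2(\Delta)=g_2(\Delta-v)+g_1(\lk(v))$ is the elementary identity coming from $f_1(\Delta)=f_1(\Delta-v)+f_0(\lk(v))$ and $f_0(\Delta)=f_0(\Delta-v)+1$, and one then only needs Lemma~\ref{lm: inf rigidity on antistars} to conclude $g_2(\Delta-v)\ge 0$.)

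\emph{Part (2).} Fix an edge $e=uv\in\Delta$. Since $d\ge 4$, $e$ is a nonempty face that is not a ridge, so $\Delta-e$ is again a normal pseudomanifold with boundary, and its graph is the graph of $\Delta$ with the single edge $e$ deleted; in particular $f_1(\Delta-e)=f_1(\Delta)-1$, $f_0(\Delta-e)=f_0(\Delta)$, and hence $g_2(\Delta-e)=g_2(\Delta)-1$. Applying Lemma~\ref{lm: inf rigidity on antistars} with $F=e$ yields $\dim\Stress^a_2(\Delta-e,p)=g_2(\Delta-e)=g_2(\Delta)-1=\dim\Stress^a_2(\Delta,p)-1$. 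On the other hand, since every affine $2$-stress is supported on vertices and edges, an affine $2$-stress on $(\Delta,p)$ lies in $\Stress^a_2(\Delta-e,p)$ precisely when it assigns weight $0$ to $e$; that is, $\Stress^a_2(\Delta-e,p)$ is the kernel of the linear functional $\omega\mapsto\omega_e$ on $\Stress^a_2(\Delta,p)$. As this kernel has codimension $1$, the functional is not identically zero, so some affine $2$-stress $\omega$ on $(\Delta,p)$ has $\omega_e\ne 0$, which is the assertion.

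The substantive content has already been absorbed into Lemma~\ref{lm: inf rigidity on antistars} and into the identity $\dim\Stress^a_2(\Delta,p)=g_2(\Delta)$ (the $g$-theorem for polytopes and spheres, respectively Fogelsanger's theorem), so I do not anticipate a genuine obstacle. The only points worth double-checking are that an edge is an admissible nonempty face in Lemma~\ref{lm: inf rigidity on antistars} (it is: $d\ge 4$ makes $e$ a non-ridge and $\Delta-e$ a normal pseudomanifold with boundary) and that $\dim\Stress^a_1(\st(v),p)=g_1(\lk(v))$, which holds because, via the cone lemma (Lemma~\ref{cone-lemma1}), $\Stress^a_1(\st(v),p)\cong\Stress^a_1(\lk(v),p')$, and $\lk(v)$ --- the boundary complex of the $(d-1)$-polytope $P/v$, or a $(d-2)$-dimensional normal pseudomanifold with a generic embedding --- affinely spans $\R^{d-1}$.
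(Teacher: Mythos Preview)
Your proof is correct and follows essentially the same approach as the paper's own argument: both parts rely on Lemma~\ref{lm: inf rigidity on antistars} and on the surjectivity of $\partial_{x_v}$ established in the proof of Theorem~\ref{thm: i=2}, with Part~(2) using that $\dim\Stress^a_2(\Delta-e,p)=g_2(\Delta)-1$ forces the edge $e$ into the support of some affine $2$-stress. Your write-up is a bit more explicit (spelling out the kernel interpretation and the cone-lemma justification for $\dim\Stress^a_1(\st(v),p)=g_1(\lk(v))$), but the substance is the same.
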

	\begin{proof}
		Part 1 follows from the facts that $g_2(\Delta)=\dim \Stress^a_2(\Delta, p)$, $g_1(\lk(v))=\dim \Stress^a_1(\st(v), p)$, and $\partial_{x_v}: \Stress^a_2(\Delta, p)\to \Stress^a_1(\st(v),p)$ is onto. For part 2, we apply Lemma \ref{lm: inf rigidity on antistars} to an edge $F$. Since the graph of $\Delta$ is the graph of $\Delta-F$ plus the edge $F$, it follows that $\dim \Stress^a_2(\Delta-F,p)=g_2(\Delta-F)=g_2(\Delta)-1$. Hence $F$ must participate in some affine 2-stress on $(\Delta, p)$.
	\end{proof}

	\section{Reconstructing from higher affine stresses}
	In this section we prove several results related to Conjectures \ref{conj:comb-type}, \ref{conj: generalization of Kalai's conjecture}(2), \ref{conj: the support of stresses}, and \ref{conj: structure of stress spaces} for polytopes without large missing faces and for flag PL spheres. We also briefly touch on the $g$-numbers of flag spheres. 
	
	\subsection{Polytopes and flag spheres} 
	We begin with establishing the partition of unity of spaces of linear stresses of antistars.
	\begin{lemma} \label{lm: partition of unity on antistar}
		Let $d\geq 4$. Let $\Delta$ be the boundary complex of a simplicial $d$-polytope $P$ with its natural embedding $p$, or a flag PL $(d-1)$-sphere with a generic embedding $p$. Let $\tau\in \Delta$. Then for $1\leq i\leq d-1$, $\Stress^\ell_i(\Delta-\tau, p)=\sum_{H\in  \I(\Delta-\tau)} \Stress^\ell_i\big(\st(H, \Delta), p\big)$. 
	\end{lemma}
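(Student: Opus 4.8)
The plan is to first pass to an equivalent partition-of-unity statement internal to the normal pseudomanifold with boundary $\Delta-\tau$, and then prove that statement by induction on $d$, using Adiprasito's partition of unity of linear stresses for Cohen--Macaulay complexes (the Cohen--Macaulay version of part 1 of Theorem \ref{lm: partition of unity}), the $i=1$ case provided by Remark \ref{rm: linear}, and the cone lemma.

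I would begin with two preliminary observations. If $H\in\I(\Delta-\tau)$, then by Lemma \ref{lm:min-int-faces-in-antistars} there is a nonempty $K\subseteq\tau$ with $H\cup K$ a missing face of $\Delta$; since $H\cup K\subseteq H\cup\tau$, this forces $H\cup\tau\notin\Delta$, so $\st(H,\Delta)=\st(H,\Delta-\tau)\subseteq\Delta-\tau$. In particular the inclusion $\supseteq$ in the lemma is immediate, and the lemma is equivalent to the internal identity $\Stress^\ell_i(\Delta-\tau,p)=\sum_{H\in\I(\Delta-\tau)}\Stress^\ell_i(\st(H,\Delta-\tau),p)$. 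Next, $\Delta-\tau$ is a ball (a shellable $(d-1)$-ball in the polytope case, a PL $(d-1)$-ball in the PL case), hence Cohen--Macaulay of dimension $d-1$; after translating $P$ so that the origin is interior (polytope case), the restriction of $p$ has the property that the images of the vertices of any facet of $\Delta-\tau$ are linearly independent. Thus $\dim\Stress^\ell_k(\Delta-\tau,p)=h_k(\Delta-\tau)$ and Adiprasito's partition of unity applies to $\Delta-\tau$, giving $\Stress^\ell_i(\Delta-\tau,p)=\sum_{v\in V(\Delta-\tau)}\Stress^\ell_i(\st(v,\Delta-\tau),p)$.

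It therefore suffices to prove, for each vertex $v$ of $\Delta-\tau$, that $\Stress^\ell_i(\st(v,\Delta-\tau),p)\subseteq\sum_{H\in\I(\Delta-\tau)}\Stress^\ell_i(\st(H,\Delta-\tau),p)$. If $v$ is an interior vertex of $\Delta-\tau$ this is trivial, since then $\{v\}\in\I(\Delta-\tau)$ and $\st(v,\Delta-\tau)=\st(v,\Delta)$. If $v$ is a boundary vertex I would induct on $d$. Here $\lk(v,\Delta-\tau)=\lk(v,\Delta)-\sigma$ is an antistar in the $(d-2)$-sphere $\lk(v,\Delta)$, where $\sigma=\tau$ if $v\notin\tau$ and $\sigma=\tau\setminus v$ if $v\in\tau$, and $\lk(v,\Delta)$ is again a simplicial-polytope boundary (namely $\partial(P/v)$) or a flag PL $(d-2)$-sphere, equipped via the cone lemma with a natural, respectively generic, embedding $p'$. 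The two extreme values of $i$ are handled directly: $i=1$ is Remark \ref{rm: linear}, while for $i=d-1$ one has $\dim\Stress^\ell_{d-1}(\st(v,\Delta-\tau),p)=h_{d-1}(\lk(v,\Delta-\tau))=0$, as $\lk(v,\Delta-\tau)$ is a $(d-2)$-ball and the top $h$-number of a ball vanishes, so only the interior vertices contribute and these are already accounted for. For $2\le i\le d-2$ I would apply the inductive hypothesis to $(\lk(v,\Delta)-\sigma,p')$, obtaining $\Stress^\ell_i(\lk(v,\Delta-\tau),p')=\sum_{H'\in\I(\lk(v,\Delta)-\sigma)}\Stress^\ell_i(\st(H',\lk(v,\Delta)),p')$, and then apply the cone lemma with apex $v$: it carries the left-hand side to $\Stress^\ell_i(\st(v,\Delta-\tau),p)$ and each summand to $\Stress^\ell_i(\st(\{v\}\cup H',\Delta),p)$, since $v*\st(H',\lk(v,\Delta))=\st(\{v\}\cup H',\Delta)$.

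To conclude it remains to check that every face $\{v\}\cup H'$ arising this way is an \emph{interior} face of $\Delta-\tau$; granting this, $\{v\}\cup H'$ contains some $H''\in\I(\Delta-\tau)$, and then $\st(\{v\}\cup H',\Delta)\subseteq\st(H'',\Delta)=\st(H'',\Delta-\tau)$, which completes the induction. This verification is a comparison of $\partial(\Delta-\tau)=\partial\overline\tau*\lk(\tau,\Delta)$ with the corresponding description of $\partial(\lk(v,\Delta)-\sigma)$: assuming $\{v\}\cup H'=\alpha\cup\beta$ with $\alpha\subsetneq\tau$ and $\beta\in\lk(\tau,\Delta)$, one locates $v$ in $\beta$ when $v\notin\tau$ and in $\alpha$ when $v\in\tau$, and in either case deduces that $H'$ itself lies in $\partial(\lk(v,\Delta)-\sigma)$, contradicting $H'\in\I(\lk(v,\Delta)-\sigma)$. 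I expect this combinatorial bookkeeping — and ensuring that the recursion grounds out correctly through the two extreme values of $i$, noting that links of flag complexes are flag so the induction never leaves the hypothesized class — to be the only genuinely technical point; everything else is a direct appeal to the partition of unity for Cohen--Macaulay complexes and to the cone lemma.
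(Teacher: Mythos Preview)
Your approach differs substantially from the paper's, and the core idea is correct: apply Adiprasito's Cohen--Macaulay partition of unity to $\Delta-\tau$ to reduce to vertex stars, dispose of interior vertices trivially, and for each boundary vertex $v$ descend via the (linear) cone lemma to the antistar $\lk(v,\Delta)-\sigma$ in one dimension lower, invoking the inductive hypothesis there. Your combinatorial verification that each $\{v\}\cup H'$ is an interior face of $\Delta-\tau$ is correct, as is your handling of the two extreme degrees ($i=1$ via Remark~\ref{rm: linear}, and $i=d-1$ by noting that $h_{d-1}(\lk(v,\Delta-\tau))=0$ for boundary $v$, so only interior vertices contribute). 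The paper, by contrast, treats the two hypotheses separately: for flag PL spheres it simply cites \cite[Theorem~50]{AdiprasitoYashfe}, while for polytopes it builds an explicit basis of $\Stress^\ell_i(\Delta-\tau,p)$ from a line shelling of $\partial P$ that lists the facets of $\st(\tau)$ last, taking for each type-$i$ step $F_j$ a stress supported on $\st(F_j\setminus r(F_j),\Delta)$ and checking that $F_j\setminus r(F_j)$ is an interior face of $\Delta-\tau$. The paper's polytope argument is thus elementary (shellability only), whereas yours is uniform across both cases but imports the Cohen--Macaulay partition of unity as a black box.

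One technical point needs attention. You assert that the embedding $p'$ of $\lk(v,\Delta)$ furnished by the cone lemma is again ``natural, respectively generic,'' so that the inductive hypothesis applies verbatim. This is not justified: the linear cone lemma produces a $(d-1)$-embedding whose coordinates are rational functions of those of $p$, which in the flag case need not remain algebraically independent over $\Q$, and in the polytope case need not coincide with the natural vertex-figure embedding. The clean fix is to strengthen the statement you prove by induction on $d$ to cover \emph{every} embedding in which the vertices of each facet are linearly independent---this is all that Adiprasito's partition of unity, Remark~\ref{rm: linear}, and the identity $\dim\Stress^\ell_k=h_k$ actually require---and then note that the cone lemma preserves this condition. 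With that adjustment the induction closes (grounding out at $d=3$, where only the two extreme values of $i$ occur), and the lemma as stated follows as a special case.
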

	\begin{proof}
		The case of flag PL spheres follows from \cite[Theorem 50]{AdiprasitoYashfe}. (In this case, $\Delta-\tau$ is a PL ball, its boundary complex is the induced subcomplex of $\Delta-\tau$, and all minimal interior faces of $\Delta-\tau$ are vertices). 
		Thus, assume that $\Delta=\partial P$. The proof of this case is essentially the same as that of \cite[Theorem 16]{Lee96}. Since $P$ is a polytope, there is a line shelling of $\Delta$ that lists the facets of the star of $\tau$ last. Consequently, there exists a shelling of $\Delta-\tau$. Consider such a shelling and let $F_1, \dots, F_k$ be the facets at the shelling steps of type $i$; here $k=h_i(\Delta -\tau)$. For $1\leq j\leq k$, let $G_j=F_j\backslash r(F_j)$ and let $\Delta_j$ be the subcomplex of $\Delta-\tau$ generated by $F_j$ and all the facets that were added before $F_j$ (in the shelling order). We claim that $\st(G_j, \Delta)$ is a subcomplex of $\Delta_j$; in particular, $G_j$ is an interior face of $\Delta-\tau$. Indeed, $\lk(G_j,\Delta)$ is an $(i-1)$-sphere and in the induced shelling of this sphere, the step that adds $F_j$ corresponds to the shelling step of type $i$ (that adds $r(F_j)$). This means that after this step, all facets of the link are in the complex. Now, since $\st(G_j,\Delta)$ and $\st(G_j,\Delta)-r(F_j)$ are Cohen--Macaulay complexes with $h_i(\st(G_j,\Delta))=1$ and $h_i\big(\st(G_j,\Delta)-r(F_j)\big)=0$, it follows that there is
		a  linear $i$-stress $\omega_j$ supported on $\st(G_j,\Delta)\subseteq \Delta_j$ that assigns a nonzero weight to $r(F_j)$. Also, since $\omega_j$ is supported on $\Delta_j$, it assigns zero weights to all $r(F_s)$ with $s>j$. We conclude that  $\{\omega_j: 1\leq j\leq k\}$ is a linearly independent set of stresses. Furthermore, since $k=h_i(\Delta-\tau)$, this set is a basis of $\Stress^\ell_i(\Delta-\tau,p)$. The result follows since $G_j$ is an interior face of $\Delta-\tau$ and hence $G_j$ contains a minimal interior face $H_j$ of $\Delta-\tau$. Therefore,
		$\Stress^\ell_i(\Delta-\tau, p)=  \sum_j \Stress^\ell_i\big(\st(G_j, \Delta), p\big) = \sum_{H\in \I(\Delta-\tau)} \Stress^\ell_i\big(\st(H, \Delta), p\big)$.
	\end{proof}

	Lemma \ref{lm: partition of unity on antistar} along with Theorem \ref{thm: Lefschetz}  imply the following higher-dimensional analogs of Lemma \ref{lm: inf rigidity on antistars} and part 1 of Corollary \ref{cor: affine 2-stresses}. This result will be used in essentially all proofs of this section.
	
	\begin{lemma}\label{cor: surjection of affine stress spaces}
	    Let $d\geq 4$ and $1\leq j\leq i\leq d/2$. Let $\Delta$ be the boundary complex of a simplicial $d$-polytope $P$ with its natural embedding $p$, or a flag PL $(d-1)$-sphere with a generic embedding $p$. In the case that $\Delta=\partial P$, assume further that all missing faces of $\Delta$ have dimension $\leq d-2i+1$. Then for any nonempty face $\tau$ of $\Delta$ of dimension $\leq j-1$, 
	    \begin{enumerate}
	        \item the map $\partial_c: \Stress^\ell_j(\Delta-\tau, p) \to \Stress^\ell_{j-1}(\Delta-\tau,p)$ is onto. In particular, $\dim \Stress^a_j(\Delta-\tau,p)=g_j(\Delta-\tau)$;
	        \item the map $\partial_{x_\tau}: \Stress^a_j(\Delta,p)\to \Stress^a_{j-|\tau|}(\st (\tau,\Delta), p)$ is onto, and hence $g_j(\Delta)\geq g_{j-|\tau|}(\lk(\tau))$.
	    \end{enumerate}
	\end{lemma}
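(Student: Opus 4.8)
\textbf{Proof proposal for Lemma \ref{cor: surjection of affine stress spaces}.}
The plan is to reduce both parts to an exact-sequence-plus-dimension-count argument of exactly the type used in the proof of part 2 of Theorem \ref{lm: partition of unity}, feeding in Lemma \ref{lm: partition of unity on antistar} for the antistars and Theorem \ref{thm: Lefschetz} (the hard Lefschetz property of $\theta_{d+1}=c$) for the Lefschetz surjectivity. I would prove part 1 first; part 2 will follow quickly from it together with the $g$-theorem Lefschetz statement for $\Delta$ itself.

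For part 1, fix a nonempty face $\tau$ with $\dim\tau\le j-1$, so that $\Delta-\tau$ is a $(d-1)$-dimensional normal pseudomanifold with boundary; its boundary complex is $\partial\overline{\tau}*\lk(\tau,\Delta)$ (or, in the flag PL-sphere case, an induced subcomplex as noted in Lemma \ref{lm: partition of unity on antistar}). By Lemma \ref{lm: partition of unity on antistar}, $\Stress^\ell_j(\Delta-\tau,p)=\sum_{H\in\I(\Delta-\tau)}\Stress^\ell_j(\st(H,\Delta),p)$. Now for each $H\in\I(\Delta-\tau)$, I want the restriction of $\partial_c$ to $\Stress^\ell_j(\st(H,\Delta),p)$ to land in $\Stress^\ell_{j-1}(\Delta-\tau,p)$ and to be surjective onto $\sum_H\Stress^\ell_{j-1}(\st(H,\Delta),p)$ — and then conclude by Lemma \ref{lm: partition of unity on antistar} applied in degree $j-1$ that this sum is all of $\Stress^\ell_{j-1}(\Delta-\tau,p)$. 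The surjectivity on each star $\st(H,\Delta)=H*\lk(H,\Delta)$ is where the missing-face hypothesis enters: by Lemma \ref{lm:min-int-faces-in-antistars}, every $H\in\I(\Delta-\tau)$ satisfies $\dim H\le (d-2i+1)-(\dim\tau+1)\le d-2i-1$ when $\Delta=\partial P$ (and there is an analogous bound in the flag PL case), so $\lk(H,\Delta)$ is a homology $(d-|H|-1)$-sphere with $d-|H|-1\ge 2i-1\ge 2j-1$; Theorem \ref{thm: Lefschetz} combined with the cone lemma (Lemma \ref{cone-lemma1}) for the cone $H*\lk(H,\Delta)$ then gives surjectivity of $\partial_c:\Stress^\ell_j(\st(H,\Delta),p)\to\Stress^\ell_{j-1}(\st(H,\Delta),p)$. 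Alternatively — and probably cleaner to write — one can avoid the star-by-star bookkeeping entirely: establish the exact sequence
\begin{equation*}
0\to\Stress^\ell_j\big(\partial(\Delta-\tau),p\big)\to\Stress^\ell_j(\Delta-\tau,p)\xrightarrow{\ \partial_c\ }\Stress^\ell_{j-1}(\Delta-\tau,p),
\end{equation*}
and prove surjectivity by comparing $\dim\Stress^\ell_j(\Delta-\tau)=h_j(\Delta-\tau)$ with $h_j(\partial(\Delta-\tau))+h_{j-1}(\Delta-\tau)$ (the former being the dimension of the kernel, using that $\Delta-\tau$ is Cohen--Macaulay and Dehn--Sommerville-type relations for the boundary pair). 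Either route yields surjectivity of $\partial_c$, hence $\dim\Stress^a_j(\Delta-\tau,p)=\dim\Stress^\ell_j(\Delta-\tau,p)-\dim\Stress^\ell_{j-1}(\Delta-\tau,p)=h_j(\Delta-\tau)-h_{j-1}(\Delta-\tau)=g_j(\Delta-\tau)$.

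For part 2, I would run the argument of the proof of Theorem \ref{thm: i=2} almost verbatim. The sequence
\begin{equation*}
0\to\Stress^a_j(\Delta-\tau,p)\to\Stress^a_j(\Delta,p)\xrightarrow{\ \partial_{x_\tau}\ }\Stress^a_{j-|\tau|}(\st(\tau,\Delta),p)
\end{equation*}
is exact, with $\Stress^a_j(\Delta-\tau,p)$ the kernel. By Theorem \ref{thm: Lefschetz}, $\dim\Stress^a_j(\Delta,p)=g_j(\Delta)$; by the cone lemma plus Theorem \ref{thm: Lefschetz} applied to $\lk(\tau,\Delta)$ (a homology $(d-|\tau|-1)$-sphere with $d-|\tau|-1\ge 2j-2|\tau|$ since $|\tau|\le j\le d/2$), $\dim\Stress^a_{j-|\tau|}(\st(\tau,\Delta),p)=g_{j-|\tau|}(\lk(\tau,\Delta))$; and by part 1, $\dim\Stress^a_j(\Delta-\tau,p)=g_j(\Delta-\tau)$. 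The identity $f_{k-1}(\Delta)=f_{k-1}(\Delta-\tau)+f_{k-|\tau|-1}(\lk(\tau,\Delta))$ gives $h_j(\Delta)=h_j(\Delta-\tau)+h_{j-|\tau|}(\lk(\tau))$ and likewise in degree $j-1$, whence $g_j(\Delta)=g_j(\Delta-\tau)+g_{j-|\tau|}(\lk(\tau))$ (cf.~the computation in the proof of Theorem \ref{thm: i=2}). Matching dimensions forces $\partial_{x_\tau}$ to be onto, and the inequality $g_j(\Delta)\ge g_{j-|\tau|}(\lk(\tau))$ is immediate.

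\textbf{Main obstacle.} The only real work is part 1 — specifically, verifying that $\partial_c$ is surjective on $\Stress^\ell_j(\Delta-\tau,p)$. The subtlety is that $\Delta-\tau$ is not a sphere but a ball, so Theorem \ref{thm: Lefschetz} does not apply to it directly; I have to either decompose $\Stress^\ell_j(\Delta-\tau)$ into star pieces via Lemma \ref{lm: partition of unity on antistar} and check that $\partial_c$ respects the decomposition and is surjective piece-by-piece (using the dimension bound on minimal interior faces coming from Lemma \ref{lm:min-int-faces-in-antistars} and the missing-face hypothesis, so that each relevant link is a sphere of dimension $\ge 2j-1$), or else reduce to the boundary pair $(\Delta-\tau,\partial(\Delta-\tau))$ and do a Dehn--Sommerville bookkeeping. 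The star-decomposition route is the one I would write out, since it is the one foreshadowed by Lemma \ref{lm: partition of unity on antistar} and mirrors Lee's shelling argument; the key point to get right is that the dimension bound $\dim H\le d-2i-1$ on minimal interior faces $H$ of $\Delta-\tau$ leaves enough room ($d-|H|-1\ge 2i-1\ge 2j-1$) for the Lefschetz surjectivity on each $\st(H,\Delta)$ to hold.
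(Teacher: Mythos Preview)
Your proposal is correct and takes essentially the same route as the paper: decompose $\Stress^\ell_j(\Delta-\tau,p)$ via Lemma~\ref{lm: partition of unity on antistar}, bound the dimension of each minimal interior face using Lemma~\ref{lm:min-int-faces-in-antistars} together with the missing-face hypothesis, apply Theorem~\ref{thm: Lefschetz} plus the cone lemma on each star to get surjectivity of $\partial_c$ piece by piece, and then run the exact-sequence dimension count for part~2.

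Two small corrections. First, the bound on a minimal interior face $H\in\I(\Delta-\tau)$ is only $\dim H\le d-2i$, not $(d-2i+1)-|\tau|$: Lemma~\ref{lm:min-int-faces-in-antistars} guarantees that \emph{some} nonempty subset of $\tau$ completes $H$ to a missing face, not all of $\tau$. This weaker bound still suffices, since then $\lk(H,\Delta)$ has dimension at least $2i-2$, and Theorem~\ref{thm: Lefschetz} gives surjectivity of $\partial_c$ in all degrees $\le i$ (so in particular in degree $j$). Second, your ``alternative'' exact sequence with kernel $\Stress^\ell_j(\partial(\Delta-\tau),p)$ is not correct as stated --- the kernel of $\partial_c$ on $\Stress^\ell_j(\Delta-\tau,p)$ is $\Stress^a_j(\Delta-\tau,p)$, which has no reason to coincide with the linear $j$-stresses supported on the boundary --- so the star-decomposition route (which you correctly identify as the one to write out) is the one that actually works.
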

	\begin{proof}
	    Throughout the proof, all stars and links are computed in $\Delta$. Let $\sigma$ be a minimal interior face of $\Delta-\tau$. Since all missing faces of $\Delta$ have dimension $\leq d-2i+1$, it follows from Lemma \ref{lm:min-int-faces-in-antistars} that $\dim\sigma\leq d-2i$, and so $\lk(\sigma)$ is a PL sphere of dimension at least $2i-2$. By the cone lemma and by Theorem \ref{thm: Lefschetz}, we obtain that the map $\partial_c: \Stress^\ell_j(\st(\sigma),p)\to \Stress^\ell_{j-1}(\st(\sigma),p)$ is onto for all $j\leq i$ and all minimal interior faces $\sigma$ of $\Delta-\tau$. This together with Lemma \ref{lm: partition of unity on antistar} implies that $\partial_c: \Stress^\ell_j(\Delta-\tau,p)\to \Stress^\ell_{j-1}(\Delta-\tau,p)$ is also onto. Finally, since $\Delta-\tau$ is Cohen--Macaulay, $\dim \Stress^\ell_j(\Delta-\tau)=h_j(\Delta-\tau)$ for all $j$. We conclude that $\dim \Stress^a_j(\Delta-\tau)=g_j(\Delta-\tau)$ for all $j\leq i$.

		As in the proof of Theorem \ref{lm: partition of unity}, we have the following exact sequence: 
		$$0\to \Stress^a_j(\Delta-\tau,p) \to \Stress^a_j(\Delta,p) \stackrel{\partial x_\tau}{\longrightarrow} \Stress^a_{j-|\tau|}(\st (\tau),p).$$
		By part 1 and Theorem \ref{thm: Lefschetz}, the dimensions of the three spaces in the sequence are $g_j(\Delta-\tau)$, $g_j(\Delta)$, and $g_{j-1}(\lk(\tau))$, respectively. Since for any sphere $\Delta$ and $\tau\in\Delta$, $g_j(\Delta)=g_j(\Delta-\tau)+g_{j-|\tau|}(\lk (\tau))$, it follows that the right-most map in this sequence, $\partial_{x_\tau}: \Stress^a_j(\Delta, p)\to \Stress^a_{j-|\tau|}(\st (\tau), p)$, must be onto. Consequently, $g_j(\Delta)\geq g_{j-|\tau|}(\lk(\tau))$. This completes the proof of both parts.
	\end{proof}
	
    With Lemmas  \ref{lm: partition of unity on antistar} and \ref{cor: surjection of affine stress spaces} at our disposal, we are ready to verify a special case of Conjecture \ref{conj: generalization of Kalai's conjecture} and a special case of Conjecture \ref{conj: structure of stress spaces}. 
	\begin{theorem}\label{thm: main}
Let $d\geq 4$ and $1\leq j < i\leq d/2$. Let $\Delta$ be the boundary complex of a simplicial $d$-polytope $P$ with its natural embedding $p$, or a flag PL $(d-1)$-sphere with a generic embedding $p$. In the case that $\Delta=\partial P$, assume further that all missing faces of $\Delta$ have dimension $\leq d-2i+1$. Then $$\Span\{\partial_{\mu} \omega: \omega\in\Stress^a_i(\Delta,p), \, \mu\in \M_{i-j}(V(\Delta))\}=\Stress^a_j(\Delta,p),$$ and so $\Stress^a_i(\Delta, p)$ determines $\Stress^a_j(\Delta, p)$. In particular, $\Stress_i^a(\Delta,p)$ determines the positions of vertices of $\Delta$ up to affne equivalence.
	\end{theorem}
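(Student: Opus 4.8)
The plan is to adapt the proof of Theorem~\ref{thm: i=2} (and of part~2 of Theorem~\ref{lm: partition of unity}) to higher degree, using Lemma~\ref{cor: surjection of affine stress spaces} in place of the rigidity input. It is enough to establish the displayed equality
\[
\Span\{\partial_{\mu}\omega : \omega\in\Stress^a_i(\Delta,p),\ \mu\in\M_{i-j}(V(\Delta))\}=\Stress^a_j(\Delta,p);
\]
the ``in particular'' clause then follows by taking $j=1$, since by Section~3.1 the space $\Stress^a_1(\Delta,p)$ is the space of affine dependences of the multiset $p(V(\Delta))$, and this multiset, affinely spanning $\R^d$, is determined up to an invertible affine transformation by those dependences. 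The inclusion $\subseteq$ in the display is immediate: each $\partial_\mu\omega$ is supported on a subcomplex of $\Delta$ and is killed by every $\partial_{\theta_i}$ and by $\partial_c$, hence lies in $\Stress^a_j(\Delta,p)$.

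For the reverse inclusion I would argue in two steps. \emph{Step~1} rewrites the left-hand side in terms of stars of faces. Identifying $\mu\in\M_{i-j}(V)$ with its support $F=\supp(\mu)$ and noting that $\partial_{x_F}\omega=0$ whenever $F\notin\Delta$ (no monomial of $\omega$ is divisible by $x_F$), the left-hand side equals $\Span\{\partial_{x_F}\omega : F\in\Delta,\ |F|=i-j,\ \omega\in\Stress^a_i(\Delta,p)\}$. Now Lemma~\ref{cor: surjection of affine stress spaces}(2), applied with $\tau=F$ (a face of dimension $i-j-1\le i-1$) and with both indices of that lemma taken equal to $i$ — so that the hypothesis ``missing faces of dimension $\le d-2i+1$'' is exactly what the lemma requires in the polytope case — shows that $\partial_{x_F}\colon\Stress^a_i(\Delta,p)\to\Stress^a_j(\st(F,\Delta),p)$ is onto for every $(i-j-1)$-face $F$. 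Hence the left-hand side equals $\sum_{F\in\Delta,\,|F|=i-j}\Stress^a_j(\st(F,\Delta),p)$.

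\emph{Step~2} is the partition of unity of affine $j$-stresses over $(i-j-1)$-faces:
\[
\sum_{F\in\Delta,\,|F|=i-j}\Stress^a_j(\st(F,\Delta),p)=\Stress^a_j(\Delta,p).
\]
I would prove this by induction on $r=i-j$. For $r=1$ this is the partition of unity over vertices — Theorem~\ref{lm: general partition of unity, PL} or~\ref{thm: general partition of unity} when $j\ge2$, and Theorem~\ref{lm: decomposition of affine dependency}/Corollary~\ref{cor: decomposition of affine dependency} when $j=1$ — and needs no assumption on missing faces. For $r\ge2$, start from $\Stress^a_j(\Delta,p)=\sum_v\Stress^a_j(\st(v,\Delta),p)$ (the case $r=1$). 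Fix a vertex $v$; then $\st(v,\Delta)=v*\lk(v,\Delta)$, and after a translation the cone lemma (Lemma~\ref{cone-lemma1}) gives an embedding $p'$ of $\lk(v,\Delta)$ together with, for each subcomplex $\Sigma\subseteq\lk(v,\Delta)$, a linear isomorphism $\Stress^a_j(v*\Sigma,p)\cong\Stress^a_j(\Sigma,p')$ (scaling each face-weight by a nonzero factor that depends only on the face), compatible with inclusions of subcomplexes. The complex $\lk(v,\Delta)$ is again of the admissible type — a flag PL $(d-2)$-sphere with a generic embedding (links of flag complexes are flag), respectively $\partial(P/v)$ with its natural embedding — and one checks that every missing face of $\lk(v,\Delta)$ either is a missing face of $\Delta$ or becomes one upon adjoining $v$, so that $\lk(v,\Delta)$ has no missing faces of dimension $>(d-1)-2(i-1)+1$. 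Therefore the inductive hypothesis (at level $r-1$, with parameters $j<i-1$) applies to $(\lk(v,\Delta),p')$, giving $\Stress^a_j(\lk(v,\Delta),p')=\sum_{|G|=r-1}\Stress^a_j(\st(G,\lk(v,\Delta)),p')$; transporting this back through the cone-lemma isomorphism and using $v*\st(G,\lk(v,\Delta))=\st(v\cup G,\Delta)$ yields $\Stress^a_j(\st(v,\Delta),p)=\sum_{|G|=r-1}\Stress^a_j(\st(v\cup G,\Delta),p)\subseteq\sum_{F\in\Delta,\,|F|=r}\Stress^a_j(\st(F,\Delta),p)$. Summing over $v$ gives one inclusion, the other being trivial, and the induction closes.

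I expect Step~2 to be the crux. The delicate points are bookkeeping ones: in the flag PL case one must know that a generic embedding of $\Delta$ induces on each (iterated) vertex link an embedding that is again generic — or at least for which Theorem~\ref{thm: Lefschetz} and the partition-of-unity theorems still apply — and in the polytope case one must track how the missing-face bound behaves under passing to links. Both are routine, and once they are in hand the argument is a formal assembly of Lemma~\ref{cor: surjection of affine stress spaces}, the partition-of-unity results, and the cone lemma.
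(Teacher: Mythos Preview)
Your proof is the paper's, just with Step~2 unpacked: the paper applies Lemma~\ref{cor: surjection of affine stress spaces} exactly as in your Step~1 and then writes ``the statements then follow from the partition of unity of the space $\Stress^a_j(\Delta,p)$; see Theorem~\ref{lm: general partition of unity, PL} and Theorem~\ref{thm: general partition of unity}.'' You correctly notice that those theorems, as stated, give the partition over \emph{vertices}, whereas the argument needs it over $(i-j-1)$-faces, and your induction on $r=i-j$ via the cone lemma is the natural way to bridge that (the paper leaves it to the reader); the bookkeeping you flag is indeed routine---for flag PL spheres the cone-lemma embedding $p'$ on $\lk(v)$ inherits algebraically independent coordinates from a generic $p$ and so is again generic, and for polytopes the missing-face bound descends to links as you say (and is in any case not needed for Step~2, since the partition-of-unity theorems carry no missing-face hypothesis).
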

	
	\begin{proof}
	    By Lemma \ref{cor: surjection of affine stress spaces}, for any $(i-j-1)$-face $\tau$ of $\Delta$, the map $\partial_{x_\tau}: \Stress^a_i(\Delta,p) \to \Stress^a_j(\st(\tau), p)$ is onto.
	    The statements then follow from the partition of unity of the space $\Stress^a_j(\Delta,p)$; see Theorem \ref{lm: general partition of unity, PL} and Theorem \ref{thm: general partition of unity}.
	\end{proof}

	Another application of Lemma \ref{cor: surjection of affine stress spaces} allows us to also establish the following special case of Conjecture \ref{conj: the support of stresses} (cf.~Corollary \ref{cor: affine 2-stresses}).
	\begin{corollary}  \label{cor:the support of stresses}
    	Let $2\leq i\leq d/2$. Let $\Delta$ be the boundary complex of a simplicial $d$-polytope with its natural embedding $p$, or a flag PL $(d-1)$-sphere with a generic embedding $p$. In the case that $\Delta=\partial P$, assume also that all missing faces of $\Delta$ have dimension $\leq d-2i+1$. Then every $(i-1)$-face of $\Delta$ participates in some affine $i$-stress on $\Delta$. 
    \end{corollary}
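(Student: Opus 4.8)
The plan is to fix an arbitrary $(i-1)$-face $\tau$ of $\Delta$ --- so $|\tau|=i$ --- and exhibit one affine $i$-stress on $(\Delta,p)$ that assigns $\tau$ a nonzero weight. First I would record the elementary degree bookkeeping. An affine $i$-stress $\omega=\sum_\mu\omega_\mu\mu$ is homogeneous of degree $i=|\tau|$, so among its monomials only $x_\tau$ itself survives the differential operator $\partial_{x_\tau}$; hence $\partial_{x_\tau}\omega$ is the constant $\omega_\tau$. Since $\partial_{x_\tau}\omega$ is an affine $0$-stress supported on $\st(\tau)$, this shows that $\tau$ participates in some affine $i$-stress on $(\Delta,p)$ if and only if the map $\partial_{x_\tau}\colon\Stress^a_i(\Delta,p)\to\Stress^a_{0}(\st(\tau),p)$ is not identically zero.

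Next I would invoke Lemma~\ref{cor: surjection of affine stress spaces}(2) with $j=i$ applied to $\tau$. The face $\tau$ has dimension $i-1=j-1$, the standing hypothesis $2\le i\le d/2$ is in force, and in the polytope case the assumed bound $\le d-2i+1$ on missing faces is precisely the hypothesis of that lemma; so the lemma applies and yields that $\partial_{x_\tau}\colon\Stress^a_i(\Delta,p)\to\Stress^a_{i-|\tau|}(\st(\tau),p)=\Stress^a_0(\st(\tau),p)$ is onto. But $\Stress^a_0(\st(\tau),p)$ is one-dimensional (it is spanned by the constant $1$; equivalently its dimension is $g_0=1$), hence nonzero, so some $\omega\in\Stress^a_i(\Delta,p)$ maps to a nonzero constant. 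By the first paragraph that $\omega$ has $\tau$ in its support, which is what we want.

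An equivalent formulation --- the one that most closely parallels the proof of Corollary~\ref{cor: affine 2-stresses}(2) --- runs by a dimension count along the exact sequence $0\to\Stress^a_i(\Delta-\tau,p)\to\Stress^a_i(\Delta,p)\stackrel{\partial_{x_\tau}}{\longrightarrow}\Stress^a_0(\st(\tau),p)$. The subspace of affine $i$-stresses that vanish on $\tau$ is exactly $\Stress^a_i(\Delta-\tau,p)$, which by Lemma~\ref{cor: surjection of affine stress spaces}(1) has dimension $g_i(\Delta-\tau)=g_i(\Delta)-g_0(\lk(\tau))=g_i(\Delta)-1$, strictly smaller than $\dim\Stress^a_i(\Delta,p)=g_i(\Delta)$; the strict inequality forces the existence of an affine $i$-stress in the support of $\tau$.

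I do not expect a genuine difficulty at this stage: all the substance has been pushed into Lemma~\ref{cor: surjection of affine stress spaces}, which in turn rests on the partition of unity for antistars (Lemma~\ref{lm: partition of unity on antistar}), on the hard Lefschetz property (Theorem~\ref{thm: Lefschetz}), and on the dimension bound for minimal interior faces of $\Delta-\tau$ coming from Lemma~\ref{lm:min-int-faces-in-antistars}. The only point I would take care to verify is that an $(i-1)$-face genuinely meets the hypotheses of that lemma: one needs $\dim\tau\le j-1$ with $j=i\le d/2$, and, in the polytope case, the bound $d-2i+1$ on missing faces is exactly what forces (via Lemma~\ref{lm:min-int-faces-in-antistars}) every minimal interior face of $\Delta-\tau$ to have dimension $\le d-2i$, so that its link is a PL sphere of dimension $\ge 2i-2$ on which Theorem~\ref{thm: Lefschetz} applies in all degrees up to $i$. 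In the flag PL sphere case no such assumption is needed, since $\Delta-\tau$ is then a PL ball whose minimal interior faces are vertices.
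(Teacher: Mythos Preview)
Your proof is correct and follows the same approach as the paper: apply part~2 of Lemma~\ref{cor: surjection of affine stress spaces} with $j=i$ and $|\tau|=i$ to see that $\partial_{x_\tau}:\Stress^a_i(\Delta,p)\to\Stress^a_0(\st(\tau),p)\cong\R$ is onto, and take any preimage of $1$. Your alternative dimension-count via part~1 is also valid and amounts to the same thing.
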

    \begin{proof}
    	Let $\tau$ be an $(i-1)$-face of $\Delta$. By part 2 of Lemma \ref{cor: surjection of affine stress spaces}, the map $\partial x_\tau: \Stress^a_i(\Delta, p)\to \Stress^a_0(\st(\tau,\Delta), p)\cong \R$ is onto. Any preimage of $1$ is then an affine $i$-stress that has $\tau$ in its support.
    \end{proof}

	It is worth remarking that the proof of Lemma \ref{lm: partition of unity on antistar} relies on \cite[Theorem 50]{AdiprasitoYashfe}. This theorem holds for all simplicial balls whose boundary complex is an induced subcomplex. If it continues to hold for all {\em homology} rather than just simplicial balls,  then all results in this subsection as well as Theorem \ref{thm: flag LB g-number} from Section 6.2 would hold for all flag $\Z/2\Z$-homology spheres rather than just flag PL spheres.

	\subsection{An interlude: $g$-numbers of flag spheres}
	The techniques developed in the previous subsection will be useful in obtaining lower bounds. The {\em octahedral $(d-1)$-sphere} is the boundary complex of the $d$-dimensional cross-polytope $\C^*_d$. As an abstract simplicial complex, $\partial \C^*_d$ is the join of $d$ copies of the $0$-sphere. In particular, octahedral spheres are flag and $h_i(\partial\C^*_d)=\binom{d}{i}$ for all $0\leq i\leq d$. Meshulam \cite{Mesh} proved that in the class of all flag $(d-1)$-spheres, the octahedral sphere simultaneously minimizes all the $f$-numbers. This result was strengthened by Athanasiadis \cite{Athan} who showed that, in fact, it simultaneously minimizes all the $h$-numbers. Here we prove that in the class of flag PL spheres, it even simultaneously minimizes all the $g$-numbers. This was conjectured in \cite{Z-flagsurvey} where the case of $i=2$ was established.

	\begin{theorem}\label{thm: flag LB g-number}
		Let $\Delta$ be a flag PL $(d-1)$-sphere. Then for every $1\leq i\leq d/2$, $g_i(\Delta)\geq \binom{d}{i}-\binom{d}{i-1}$, and equality holds if and only if $\Delta$ is the octahedral sphere.
    \end{theorem}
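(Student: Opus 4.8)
The plan is to prove the inequality $g_i(\Delta) \ge \binom{d}{i} - \binom{d}{i-1}$ by induction on $d$, using Lemma~\ref{cor: surjection of affine stress spaces} to pass from $\Delta$ to its vertex links, and then to analyze the equality case by tracking when all the inequalities along the way are tight. First I would observe that $g_i(\Delta) = \dim \Stress^a_i(\Delta, p)$ for a generic embedding $p$ (by Theorem~\ref{thm: Lefschetz}), so the statement is really about the dimension of the affine $i$-stress space. Since a flag PL $(d-1)$-sphere has no missing faces of dimension $\ge 2 \ge d - 2i + 1$ when $i \le d/2$ (indeed $d - 2i + 1 \le 1$), the hypotheses of Lemma~\ref{cor: surjection of affine stress spaces} are satisfied for all faces $\tau$ of dimension $\le i-1$; in particular for every vertex $v$ the map $\partial_{x_v}\colon \Stress^a_i(\Delta,p) \to \Stress^a_{i-1}(\st(v,\Delta), p)$ is onto, so $g_i(\Delta) \ge g_{i-1}(\lk(v,\Delta))$, and $\lk(v,\Delta)$ is again a flag PL $(d-2)$-sphere (links of flag complexes are flag, links of PL spheres are PL spheres).

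The main step is then a counting argument. Using the partition of unity (Theorem~\ref{lm: general partition of unity, PL}), $\Stress^a_i(\Delta,p) = \sum_{v} \Stress^a_i(\st(v),p)$, and a more careful bookkeeping — analogous to McMullen's formula relating the $h$-vector of a complex to those of its vertex links, or to the exact sequence $0 \to \Stress^a_i(\Delta - v) \to \Stress^a_i(\Delta) \to \Stress^a_{i-1}(\st(v))$ — should yield a summation identity. Concretely, summing the exact sequences over all vertices and using Lemma~\ref{cor: surjection of affine stress spaces}, one gets $\sum_{v \in V(\Delta)} g_{i-1}(\lk(v)) = (\text{something involving } g_i(\Delta) \text{ and } g_i \text{ of antistars})$; combined with the inductive bound $g_{i-1}(\lk(v)) \ge \binom{d-1}{i-1} - \binom{d-1}{i-2}$ and a lower bound on $f_0(\Delta) \ge 2d$ (valid for flag $(d-1)$-spheres, e.g.\ by Meshulam or by a direct argument: a flag sphere has minimum degree $\ge$ something forcing at least $2d$ vertices), this should force $g_i(\Delta)$ to be at least the claimed binomial difference, since $\binom{d}{i} - \binom{d}{i-1}$ itself satisfies the recursion $\binom{d}{i} - \binom{d}{i-1} = \tfrac{1}{2i}\big( 2d \cdot (\binom{d-1}{i-1} - \binom{d-1}{i-2}) \big)$ or a similar averaging identity that matches the octahedral sphere exactly. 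The base case $i = 1$ is immediate: $g_1(\Delta) = f_0(\Delta) - d \ge d = \binom{d}{1} - \binom{d}{0}$ by $f_0 \ge 2d$.

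For the equality case: if $g_i(\Delta) = \binom{d}{i} - \binom{d}{i-1}$, then every inequality used must be tight. In particular $f_0(\Delta) = 2d$, and for every vertex $v$ we must have $g_{i-1}(\lk(v)) = \binom{d-1}{i-1} - \binom{d-1}{i-2}$, so by the inductive characterization $\lk(v,\Delta)$ is the octahedral $(d-2)$-sphere $\partial \C^*_{d-1}$. A standard argument (used e.g.\ in Athanasiadis's work on flag spheres, or going back to the fact that a complex all of whose vertex links are octahedral and which has $2d$ vertices must itself be octahedral) then shows $\Delta = \partial \C^*_d$: each vertex $v$ has exactly $2(d-1)$ neighbors, hence exactly one non-neighbor $v'$; one checks $\{v, v'\}$ partitions $V$ into $d$ pairs, each pair is a missing edge, and flagness forces $\Delta$ to be the join of these $d$ copies of $S^0$, i.e.\ the octahedral sphere. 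Conversely $g_i(\partial \C^*_d) = h_i - h_{i-1} = \binom{d}{i} - \binom{d}{i-1}$, completing the proof.

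The hard part will be setting up the precise summation identity in the middle paragraph so that the binomial recursion closes exactly — one must be careful that the contribution of the antistars $\Delta - v$ is nonnegative (which follows from part 1 of Lemma~\ref{cor: surjection of affine stress spaces}, giving $\dim \Stress^a_i(\Delta - v, p) = g_i(\Delta - v) \ge 0$) and that the averaging over the $2d$ vertices produces exactly the octahedral value rather than something weaker; getting the constant $f_0 \ge 2d$ to interact correctly with the $1/i$-type normalization is where a naive argument would lose a factor. A secondary subtlety is making sure the inductive hypothesis is applied to $\lk(v,\Delta)$ with a genuinely generic embedding, which is fine because a generic embedding of $\Delta$ restricts (via the cone lemma, Lemma~\ref{cone-lemma1}) to one that is generic enough for all the invocations of Theorem~\ref{thm: Lefschetz} on the links.
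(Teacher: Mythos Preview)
Your inductive setup and equality-case sketch are on the right track, but the heart of the argument --- the ``summation identity'' in the middle paragraph --- does not close, and you already sense this. If you sum the exact sequences to get
\[
f_0(\Delta)\,g_i(\Delta)=\sum_v g_i(\Delta-v)+\sum_v g_{i-1}(\lk v),
\]
then using only $g_i(\Delta-v)\ge 0$ and the inductive bound on $g_{i-1}(\lk v)$ yields
\[
g_i(\Delta)\ \ge\ \binom{d-1}{i-1}-\binom{d-1}{i-2},
\]
which is strictly weaker than $\binom{d}{i}-\binom{d}{i-1}$ whenever $d>2i$. No amount of partition-of-unity bookkeeping will upgrade this: the loss is exactly the term $\binom{d-1}{i}-\binom{d-1}{i-1}$, and nothing in your outline controls $g_i(\Delta-v)$ from below by a positive quantity. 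The ``$1/i$-type normalization'' you hope for (the displayed identity $\binom{d}{i}-\binom{d}{i-1}=\frac{1}{2i}\cdot 2d\big(\binom{d-1}{i-1}-\binom{d-1}{i-2}\big)$) is in fact false for $i\ge 2$, so there is no averaging identity matching the octahedral sphere.

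The paper's proof supplies the missing idea and avoids averaging altogether. For $d>2i$, pick any vertex $v$; since $\Delta$ is flag, $v$ has a non-neighbor $v'$, and then $\st(v',\Delta)\subseteq \Delta-v$, so $g_i(\Delta-v)\ge \dim\Stress^a_i(\st(v'),p)=g_i(\lk v')$. Now apply the inductive hypothesis \emph{at the same $i$} to the flag PL $(d-2)$-sphere $\lk v'$ (legitimate since $i\le (d-1)/2$), and at $i-1$ to $\lk v$, to get
\[
g_i(\Delta)=g_i(\Delta-v)+g_{i-1}(\lk v)\ \ge\ \Big[\tbinom{d-1}{i}-\tbinom{d-1}{i-1}\Big]+\Big[\tbinom{d-1}{i-1}-\tbinom{d-1}{i-2}\Big]=\tbinom{d}{i}-\tbinom{d}{i-1}.
\]
For $d=2i$ the single inequality $g_i(\Delta)\ge g_{i-1}(\lk v)$ already suffices, via $\binom{2i-1}{i-1}-\binom{2i-1}{i-2}=\binom{2i}{i}-\binom{2i}{i-1}$. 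Your equality analysis is then fine in spirit, though note that $f_0(\Delta)=2d$ is deduced \emph{a posteriori} (all links octahedral $\Rightarrow$ all degrees $2d-2$ $\Rightarrow$ $h_2=\binom{d}{2}$ $\Rightarrow$ $f_0\le 2d$), not as an input to the inequality.
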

    \begin{proof}
		If $\Delta=\partial\C^*_d$, then $g_i(\Delta)=\binom{d}{i}-\binom{d}{i-1}$ for all $1\leq i\leq d/2$. To prove the inequality and show that $\partial\C^*_d$ is the only minimizer, we use  induction on $d$. The claim is known to hold for $i=1$. Thus assume that $i>1$ and let $v$ be a vertex of $\Delta$. In particular, $\lk(v)$ is a flag PL $(d-2)$-sphere. If $d=2i$, then by part 2 of Lemma \ref{cor: surjection of affine stress spaces} and the inductive hypothesis, 
    	\begin{equation*}
    		\begin{split}
    			&g_i(\Delta)\geq g_{i-1}(\lk(v))
    			\geq\binom{2i-1}{i-1}-\binom{2i-1}{i-2}\\
    			=\ &\binom{2i-1}{i-1}+\binom{2i-1}{i}-\binom{2i-1}{i-1}-\binom{2i-1}{i-2}=\binom{2i}{i}-\binom{2i}{i-1}.
    		\end{split}
    	\end{equation*} If $d>2i$, then let $v'$ be an interior vertex of $\Delta-v$ (it exists since $\Delta$ is flag). Since every affine $i$-stress on $\st(v')$ is also an affine $i$-stress on $\Delta-v$, by Lemma \ref{cor: surjection of affine stress spaces} and the inductive hypothesis, 
			\begin{equation*}
		\begin{split}
			& g_i(\Delta)=g_i(\Delta-v)+g_{i-1}(\lk(v))\geq g_i(\lk(v'))+g_{i-1}(\lk(v))\\
			\geq \ & \binom{d-1}{i}-\binom{d-1}{i-1}+\binom{d-1}{i-1}-\binom{d-1}{i-2}=\binom{d}{i}-\binom{d}{i-1}.
		\end{split}
	    \end{equation*}
			In both cases, if equality holds, then the link of every vertex $v$ is octahedral. Thus, every vertex $v$ has degree $2d-2$, and so $h_2(\Delta)=f_1(\Delta)-(d-1)f_0(\Delta)+\binom{d}{2}=\binom{d}{2}$. Since $g_2(\Delta)\geq \binom{d}{2}-\binom{d}{1}$, it follows that $f_0(\Delta)\leq 2d$. As $\Delta$ is flag, we must have $f_0(\Delta)=2d$, and so $\Delta$ itself is octahedral.
    \end{proof}

    \subsection{Sign vectors of affine stresses}
    In this subsection we discuss another conjecture related to Kalai's Conjecture \ref{conj:comb-type}. The statement of this conjecture is based on the notion of sign vectors.
    \begin{definition} Let $\Delta$ be the boundary complex of a simplicial $d$-polytope $P$ with its natural embedding $p$. For an affine $i$-stress $\lambda$ on $(\Delta, p)$ and an $(i-1)$-face $G$ of $\Delta$, let $$\sign(\lambda_G)=
    	\begin{cases}
    		+ & \mbox{if }\lambda_G>0 \\
    		- & \mbox{if }\lambda_G<0 \\
    		0 & \mbox{if }\lambda_G=0.
    	\end{cases}$$
    	Define $\V_i(P)=\{(\sign(\lambda_G))_{G\in \Delta, \;|G|=i}: \lambda\in \Stress^a_i(\Delta,p)\}$. Thus $\V_i(P)$ is the collection of sign vectors of the squarefree parts of $i$-stresses on $P$.
    \end{definition}
		In view of results from \cite{N-Z22}, the following strengthening of Conjecture \ref{conj:comb-type} was proposed there:
   
    \begin{conjecture}\label{conj: sign vectors}
    	Let $2\leq i\leq d/2$. Let $P\subset \R^d$ be a simplicial $d$-polytope. The $(i-1)$-skeleton of $\partial P$ and the set $\V_i(P)$ determine the combinatorial type of $P$ (i.e., they determine the entire abstract simplicial complex $\partial P$.)
    \end{conjecture}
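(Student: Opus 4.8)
The plan is to establish the conjecture under the additional hypothesis of Theorem~\ref{thm:comb-type} --- that $P$ has no missing faces of dimension $\geq d-2i+2$, equivalently all missing faces of $\partial P$ have dimension $\leq d-2i+1$ --- while leaving the general conjecture open (see the last paragraph). The idea is to localize the sign data to the links of $(i-2)$-dimensional faces, recover each such link as a labeled polytope via oriented matroid duality, and then reassemble $\partial P$ from the given $(i-1)$-skeleton together with these links.

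\emph{Localization.} Fix a face $\tau\in\partial P$ with $|\tau|=i-1$. A short monomial computation shows that for every $\Omega\in\Stress^a_i(\partial P,p)$ and every vertex $v\notin\tau$, the coefficient of $x_v$ in $\partial_{x_\tau}\Omega$ is exactly the weight $\Omega_{\tau\cup v}$: the only monomial of $\Omega$ that survives $\partial_{x_\tau}$ with an $x_v$ left over is the squarefree monomial $x_{\tau\cup v}$. By part~2 of Lemma~\ref{cor: surjection of affine stress spaces}, $\partial_{x_\tau}\colon\Stress^a_i(\partial P,p)\to\Stress^a_1(\st(\tau),p)$ is onto; and by the cone lemma (Lemma~\ref{cone-lemma1}), applied once for each of the $i-1$ vertices of $\tau$ and with the separating hyperplanes chosen so that all scaling factors $a_u$ are positive, $\Stress^a_1(\st(\tau),p)$ is identified with $\Stress^a_1\big(\lk(\tau),p'\big)$ --- $p'$ the induced embedding of $\partial(P/\tau)$ --- in such a way that the weight of a vertex $u\in V(\lk\tau)$ changes only by the positive factor $a_u$. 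Consequently the set
$$\big\{(\sign\Omega_{\tau\cup u})_{u\in V(\lk\tau)}\;:\;\Omega\in\Stress^a_i(\partial P,p)\big\}$$
coincides with the set of sign vectors of all affine dependences of the point configuration $\{p'(u):u\in V(\lk\tau)\}$, i.e.\ of the vertex set of the polytope $P/\tau$. Since the $(i-1)$-skeleton is part of the data, we know which $(i-1)$-faces contain $\tau$, so the left-hand side is obtained from $\V_i(P)$ simply by restricting each of its sign vectors to those coordinates.

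\emph{Recovering the link and reassembling.} The vertex set $V(\lk(\tau,\partial P))=\{u:\tau\cup\{u\}\text{ is an }(i-1)\text{-face}\}$ is read off from the $(i-1)$-skeleton, and $\lk(\tau,\partial P)=\partial(P/\tau)$. The set of sign vectors of affine dependences of a point configuration determines its circuits (the sign vectors of minimal support), hence the oriented matroid of the configuration; and the oriented matroid of a configuration each of whose points is a vertex of its convex hull determines the face lattice of that hull. Thus the previous paragraph recovers the combinatorial type of $P/\tau$, i.e.\ the abstract complex $\lk(\tau,\partial P)$ on its labeled vertex set. Carrying this out for every $(i-2)$-face $\tau$ recovers all such links, and now $\partial P$ is pinned down: a set $F$ with $|F|\leq i$ is a face iff it lies in the given $(i-1)$-skeleton, while a set $F$ with $|F|\geq i+1$ is a face iff all its $(i-1)$-subsets lie in the $(i-1)$-skeleton and $F\setminus\tau\in\lk(\tau,\partial P)$ for some (equivalently, every) $(i-2)$-face $\tau\subset F$.

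The crux --- and the only place the missing-face hypothesis is used --- is the surjectivity of $\partial_{x_\tau}$ in the localization step; without it $\Stress^a_i(\partial P,p)$ can be too small to see the local combinatorics (it vanishes for $(i-1)$-stacked polytopes), and then $\V_i(P)$ carries no information about the links. Removing the hypothesis appears to require at least a proof of Conjecture~\ref{conj: the support of stresses}, so that every $(i-1)$-face still participates in some affine $i$-stress, together with a genuinely new argument showing that the sign patterns still available already determine the oriented matroid of each $V(P/\tau)$; I do not see how to do this in general, so I would present only the special case above. Note, incidentally, that one should not expect a direct sphere analogue of this argument, since for a sphere with a generic embedding the affine dependences of any link realize the uniform oriented matroid and hence carry no combinatorial information.
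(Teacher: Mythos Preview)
Your argument is correct for the special case of Theorem~\ref{thm:comb-type} (as you explicitly acknowledge), and the key ingredients are the same as the paper's: the surjectivity of $\partial_{x_\tau}$ from Lemma~\ref{cor: surjection of affine stress spaces} together with the sign-preserving cone isomorphism (your localization step is essentially Lemma~\ref{lm:signs}).

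Where you diverge from the paper is in how you exploit that localization. The paper appeals to an external reconstruction criterion, \cite[Lemma~4.5]{N-Z22}, and then for each missing face $M$ and each $(i-1)$-subset $F\subset M$ constructs one specific sign vector (Theorem~\ref{thm: sign vectors under lifting}) via a Radon-type argument on the vertices of $P/F$; that single sign vector is enough to certify that $M$ is missing. You instead observe that surjectivity of $\partial_{x_\tau}$ means the restricted sign vectors from $\V_i(P)$ recover \emph{all} sign vectors of affine dependences of $V(P/\tau)$, and hence, by oriented-matroid duality, the entire face lattice of $P/\tau$; you then reassemble $\partial P$ from these links. Your route is more self-contained (no dependence on \cite{N-Z22}) and conceptually cleaner, since it makes transparent exactly how much combinatorial information the sign data carries. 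The paper's route has the virtue of exhibiting an explicit separating stress for each missing face, which is an independently useful statement. Both approaches make identical use of the missing-face hypothesis, and your closing remarks about its necessity and about generic spheres are apt.
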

    
    The goal of this section is to verify Conjecture  \ref{conj: sign vectors} in the following special case. 
		\begin{theorem}  \label{thm:comb-type}
    	Let $2\leq i\leq d/2$. Let $P\subset \R^d$ be a simplicial $d$-polytope whose boundary complex has only missing faces of dimension $\leq d-2i+1$. Then the $(i-1)$-skeleton of $\partial P$ and the set $\V_i(P)$ determine the entire complex $\partial P$.
    \end{theorem}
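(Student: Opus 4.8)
The plan is to prove the statement by induction on $i$. The base case $i=2$ is precisely the theorem of \cite{N-Z22}: for any simplicial $d$-polytope with $d\geq 4$, the graph $\skel_1(\partial P)$ together with the set $\V_2(P)$ already determines the combinatorial type of $P$ (no hypothesis on missing faces is needed there). So I would assume $i\geq 3$ and that the statement holds with $i$ replaced by $i-1$, for simplicial polytopes of every dimension; write $p$ for the natural embedding of $\partial P$. Note that $3\leq i\leq d/2$ forces $d\geq 6$, so $P/v$ is a simplicial $(d-1)$-polytope of dimension $\geq 5$ for every vertex $v$. We are handed $\skel_{i-1}(\partial P)$ and $\V_i(P)$ and must recover $\partial P$; the mechanism, as in the proof of Theorem \ref{thm: main}, will be to take partial derivatives of affine $i$-stresses, but now the partition of unity is replaced by induction together with the $i=2$ base case.

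The heart of the argument is the following claim: for each vertex $v$ of $P$, the data $\big(\skel_{i-1}(\partial P),\V_i(P)\big)$ determine the pair $\big(\skel_{i-2}(\partial(P/v)),\V_{i-1}(P/v)\big)$, where $P/v$ carries its natural (vertex-figure) embedding $p'$. The skeleton part is immediate, since $\skel_{i-2}(\partial(P/v))$ is the link of $v$ in $\skel_{i-1}(\partial P)$. For the set of sign vectors I would combine three ingredients. First, every missing face of $\partial P$ has dimension $\leq d-2i+1$, and an elementary argument (deciding whether a missing face of $\lk(v,\partial P)$ is already a missing face of $\partial P$ or becomes one after adjoining $v$) shows that every missing face of $\partial(P/v)$ also has dimension $\leq d-2i+1=(d-1)-2(i-1)$, so $P/v$ satisfies the hypotheses of the theorem at index $i-1$. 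Second, by Lemma \ref{cor: surjection of affine stress spaces}(2), applied with the role of $j$ played by $i$ and $\tau=\{v\}$, the map $\partial_{x_v}\colon \Stress^a_i(\partial P,p)\to\Stress^a_{i-1}(\st(v,\partial P),p)$ is onto, and by the cone lemma (Lemma \ref{cone-lemma1}) together with the sign-compatibility remark following it, $\Stress^a_{i-1}(\st(v,\partial P),p)$ is isomorphic to $\Stress^a_{i-1}(\lk(v,\partial P),p')$ in a way that preserves, for each $(i-2)$-face $G$ of $\lk(v,\partial P)$, the sign of the weight assigned to $G$. Third, for $\omega\in\Stress^a_i(\partial P,p)$ and such a face $G$, the weight of $G$ in $\partial_{x_v}\omega$ equals the squarefree weight $\omega_{\{v\}\cup G}$: since $v\notin G$, the coefficient of $x_G$ in $\partial_{x_v}\omega$ receives a contribution only from the squarefree monomial $x_v x_G$ of $\omega$ (a monomial divisible by $x_v^2$ differentiates to one still supported on $v$). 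Stringing these together, $\V_{i-1}(P/v)$ is exactly the set obtained from $\V_i(P)$ by restricting each sign vector to the $(i-1)$-faces of $\partial P$ that contain $v$ and relabeling $\{v\}\cup G\mapsto G$; in particular it is determined by our data.

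Granting the claim, the induction closes as follows. By the claim and the induction hypothesis applied to $P/v$ (a simplicial $(d-1)$-polytope with $2\leq i-1\leq(d-1)/2$, all missing faces of dimension $\leq(d-1)-2(i-1)+1$), the combinatorial type of $\lk(v,\partial P)$ is determined by the given data, for every vertex $v$; moreover $V(\partial P)$ is part of $\skel_{i-1}(\partial P)$. A simplicial complex is reconstructed from its vertex set together with all of its vertex links, via the rule that a set $\sigma$ with $|\sigma|\geq 2$ belongs to the complex if and only if $\sigma\setminus v$ is a face of the link of $v$ for some (equivalently, every) $v\in\sigma$. Hence $\partial P$ is determined, which completes the induction and the proof.

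The substantive step is the claim, and within it the surjectivity assertion of Lemma \ref{cor: surjection of affine stress spaces}(2): this is where the restriction on missing faces is genuinely used (via the partition of unity of linear stresses and the hard Lefschetz property), and it is the reason the bound $d-2i+1$ cannot be relaxed in this argument. The remaining pieces — sign compatibility under the cone construction, the identification of the relevant coefficients of $\partial_{x_v}\omega$ with squarefree data of $\omega$, and the descent of the missing-face condition to vertex links — are routine, but all of them must be verified for the bookkeeping to close up, and one should also check the degenerate endpoints (e.g.\ that the recursion bottoms out at $i=2$ in the range covered by \cite{N-Z22}).
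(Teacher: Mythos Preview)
Your argument is correct, and it takes a genuinely different route from the paper. The paper reduces Theorem~\ref{thm:comb-type} (via \cite[Lemma~4.5]{N-Z22}) to Theorem~\ref{thm: sign vectors under lifting}, which produces, for each missing face $M$ and each $(i-2)$-face $F\subset M$, an affine $i$-stress with a prescribed sign pattern on the faces $F\cup v$; this is obtained by a one-shot passage to the quotient $P/F$ (Lemma~\ref{lm:signs}) that drops from level $i$ directly to level $1$. You instead do a step-by-step descent: for each vertex $v$ you recover $\V_{i-1}(P/v)$ from $\V_i(P)$ and then invoke the theorem at $i-1$ for $P/v$, bottoming out at the $i=2$ result of \cite{N-Z22}. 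Both proofs rest on the same technical engine, namely the surjectivity of $\partial_{x_\tau}$ from Lemma~\ref{cor: surjection of affine stress spaces}(2) together with the sign compatibility of the cone lemma; the difference is purely organizational. The paper's route has the advantage of producing Theorem~\ref{thm: sign vectors under lifting} as an explicit byproduct (specific stresses that ``witness'' each missing face), whereas your route is more modular and needs nothing from \cite{N-Z22} beyond the $i=2$ case as a black box. Your bookkeeping is sound: the identity $(\partial_{x_v}\omega)_G=\omega_{\{v\}\cup G}$ for $G\in\lk(v)$, the descent of the missing-face bound to $\lk(v)$ (Lemma~\ref{lm:min-int-faces-in-antistars} or your direct argument), the range check $2\leq i-1\leq (d-1)/2$, and the reconstruction of $\partial P$ from its labelled vertex links all go through as stated.
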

  
	The following lemma will be handy.
		\begin{lemma} \label{lm:signs}
		Let $2\leq i\leq d/2$. Let $P$ be a simplicial $d$-polytope, let $F$ be a $(j-1)$-face of $\partial P$ with $j\leq i-1$, and let $Q$ be the quotient polytope $P/F$. 
		Assume also that all missing faces of $\partial P$ have dimension $\leq d-2i+1$. Then every affine $(i-j)$-stress $\omega'$ on $Q$ can be lifted to an affine $i$-stress $\omega$ on $P$ with the following property: for each $(i-j-1)$-face $\tau$ of $\partial Q$, $\sign(\omega'_\tau)=\sign(\omega_{F\cup\tau})$.
    \end{lemma}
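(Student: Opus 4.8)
The plan is to construct $\omega$ in two stages, passing through the star $\st(F,\partial P)$ and keeping track of the sign of every weight along the way. Write $F=\{v_1,\dots,v_j\}$ and recall that $\partial Q=\lk(F,\partial P)$, so that $\st(F,\partial P)=\overline{F}*\lk(F,\partial P)$ is the iterated cone over $\partial Q$ obtained by coning successively over $v_1,\dots,v_j$. For the first stage I would lift $\omega'$ through the cone lemma: applying Lemma~\ref{cone-lemma1} once for each vertex of $F$, at each step using the separating-hyperplane embedding described in the discussion following Lemma~\ref{cone-lemma1} (equivalently, invoking \cite[Corollary~3.3]{N-Z22}), namely the embedding that $\st(F,\partial P)$ inherits from the natural embedding $p$ of $\partial P$, up to an invertible affine transformation and positive rescalings. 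This produces an affine $(i-j)$-stress $\bar\omega$ on $\big(\st(F,\partial P),p\big)$ for which the scalars relating corresponding weights are all positive; in particular $\sign(\bar\omega_\tau)=\sign(\omega'_\tau)$ for every $(i-j-1)$-face $\tau$ of $\partial Q$.

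For the second stage I would apply part~2 of Lemma~\ref{cor: surjection of affine stress spaces} with the face ``$\tau$'' of that lemma taken to be $F$, which has dimension $j-1\le i-2$, and with the role of the index ``$j$'' there played by our $i$; the hypotheses needed, namely $d\ge 2i\ge 4$ and that all missing faces of $\partial P$ have dimension $\le d-2i+1$, are exactly what we are given. Consequently the map $\partial_{x_F}\colon\Stress^a_i(\partial P,p)\to\Stress^a_{i-j}(\st(F,\partial P),p)$ is surjective, and I may choose $\omega\in\Stress^a_i(\partial P,p)$ with $\partial_{x_F}\omega=\bar\omega$.

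It then remains to match weights. For an $(i-j-1)$-face $\tau$ of $\partial Q$ one has $F\cap\tau=\emptyset$ and $|F\cup\tau|=i$, and since $x_{F\cup\tau}$ is the only degree-$i$ squarefree monomial that is divisible by $x_F$ with quotient $x_\tau$ (while monomials not divisible by $x_F$ are killed by $\partial_{x_F}$), a one-line computation gives $\omega_{F\cup\tau}=(\partial_{x_F}\omega)_\tau=\bar\omega_\tau$. Combining this with the first stage yields $\sign(\omega_{F\cup\tau})=\sign(\bar\omega_\tau)=\sign(\omega'_\tau)$, which is the assertion of the lemma.

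The only genuinely delicate point I expect is the bookkeeping in the first stage: arranging the iterated cone-lemma embeddings so that they agree, up to an invertible affine transformation and positive rescalings, with the one $\st(F,\partial P)$ inherits from $P$, and verifying that all the scalars produced are positive — this is where it matters that $F$ is a bona fide face of the polytope, so that at each step the current apex can be strictly separated from the remaining vertices by a hyperplane. Everything in the remaining stages is the exact-sequence and $g$-number counting already packaged in Lemma~\ref{cor: surjection of affine stress spaces}, together with the squarefree-monomial computation.
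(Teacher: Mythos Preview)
Your proposal is correct and follows essentially the same route as the paper's proof: both compose the cone-lemma isomorphism $\Stress^a_{i-j}(\lk(F,\partial P),q)\cong\Stress^a_{i-j}(\st(F,\partial P),p)$ (which preserves signs of squarefree weights, as in the remark after Lemma~\ref{cone-lemma1} and \cite[Corollary~3.3]{N-Z22}) with the surjection $\partial_{x_F}\colon\Stress^a_i(\partial P,p)\to\Stress^a_{i-j}(\st(F,\partial P),p)$ supplied by Lemma~\ref{cor: surjection of affine stress spaces}, and then read off $\omega_{F\cup\tau}=(\partial_{x_F}\omega)_\tau$. The only cosmetic difference is that you spell out the iterated coning over the vertices of $F$, whereas the paper packages this in a single invocation of the cone lemma and its accompanying remark.
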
 
    \begin{proof}
		We work with the boundary complex of $P$, $\partial P$, with its natural embedding $p$, and the boundary complex of $Q$, $\partial Q$, with its natural embedding $q$; in particular, $\partial Q=\lk(F,\partial P)$.
		Consider the sequence 
		$$\Stress^a_i(\partial P, p) \stackrel{\partial_{x_{F}}}{\longrightarrow} \Stress^a_{i-j}\big(\st(F,\partial P), p\big) \stackrel{\phi_{i-j}}{\longrightarrow} \Stress^a_{i-j} \big(\lk(F,\partial P), q\big), $$
		where $\phi_{i-j}$ is the map from Lemma \ref{cone-lemma1}. The map $\partial_{x_F}$ is surjective by Lemma \ref{cor: surjection of affine stress spaces}, while the map $\phi_{i-j}$ is an isomorphism by Lemma \ref{cone-lemma1}. Furthermore, by the remark following Lemma \ref{cone-lemma1},  if $\omega'$ is an affine  $(i-j)$-stress on $(\lk(F,\partial P),q)$ and $\omega''\coloneqq(\phi_{i-j})^{-1}(\omega')$, then for every $(i-j-1)$-face $\tau$ of $\lk(F, \partial P)$, $\omega'_\tau$ and $\omega''_{\tau}$ have the same signs. The result follows by letting $\omega$ be any element of $\Stress^a_i(\partial P,p)$ such that $\partial_{x_F}(\omega)=\omega''$ and noting that $\omega_{F\cup\tau}= \omega''_{\tau}$.
    \end{proof}
		
		By Lemma 4.5 in \cite{N-Z22}, to prove Theorem \ref{thm:comb-type}, it suffices to establish the following result, which is interesting in its own right. This result concludes this section.
		
    \begin{theorem}\label{thm: sign vectors under lifting}
    	Let $i\geq 1$ and $d\leq 2i$. Let $\partial P$ be the boundary complex of a simplicial $d$-polytope $P$ with its natural embedding $p$. Assume that all missing faces of $\partial P$ have dimension $\leq d-2i+1$. Let $M$ be a missing face of $\partial P$ of size $\geq i+1$ and let $F\subset M$ be any subset of size $i-1$. Then there exists an affine $i$-stress $\lambda$ on $(\partial P,p)$ with the following property: for every $(i-1)$-face $G=F\cup v$ of $\partial P$, $\lambda_G>0$ if $v\in M\backslash F$ while $\lambda_G\leq 0$ if $v \notin M$.
    \end{theorem}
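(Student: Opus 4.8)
The plan is to reduce---via Lemma \ref{lm:signs}---to the case $F=\emptyset$ (equivalently $i=1$), and then to prove a geometric separation property of missing faces of simplicial polytopes. First I would pass to the quotient $Q=P/F$, a simplicial $(d-i+1)$-polytope with $\partial Q=\lk(F,\partial P)$, and set $M'=M\setminus F$. One checks that $M'$ is a missing face of $\partial Q$ with $|M'|\ge 2$: since $M'\cup F=M\notin\partial P$ we have $M'\notin\lk(F)$, while any proper subset $\tau\subsetneq M'$ satisfies $\tau\cup F\subsetneq M$, hence $\tau\cup F\in\partial P$ and $\tau\in\lk(F)$. Lemma \ref{lm:signs} applied with $j=i-1$ then lifts any affine $1$-stress $\omega'$ on $Q$ to an affine $i$-stress $\lambda$ on $(\partial P,p)$ with $\sign(\lambda_{F\cup v})=\sign(\omega'_v)$ for every vertex $v$ of $\partial Q$ (and $\lambda_{F\cup v}=0$ when $F\cup v\notin\partial P$). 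So it will suffice to produce an affine $1$-stress $\omega'$ on $Q$ with $\omega'_v>0$ for all $v\in M'$ and $\omega'_v\le 0$ for all $v\in V(Q)\setminus M'$. (When $i=1$ one takes $Q=P$, $F=\emptyset$, $M'=M$, and this is the original claim.)

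Next I would reformulate this as a statement in convex geometry. An affine $1$-stress on $Q$ is precisely a vector of coefficients of an affine dependence of the point set $q(V(Q))$. Since $M'$ is a missing face, $q(M')$ is affinely independent: an affine dependence among $q(M')$ would, via its Radon partition, produce two proper subsets of the minimal non-face $M'$---hence two faces of $Q$---whose convex hulls intersect, contradicting the fact that two faces of a polytope meet only along a common face (here the empty face, since the two subsets are disjoint). Hence $\conv(q(M'))$ is a genuine simplex, and a short convexity argument shows that it suffices to prove
\[
\relint\!\big(\conv(q(M'))\big)\ \cap\ \conv\!\big(q(V(Q)\setminus M')\big)\ \neq\ \emptyset ,
\]
since from a point $z$ in this intersection one obtains the desired $\omega'$ by writing $z$ simultaneously as a strictly positive convex combination of $q(M')$ and as a convex combination of $q(V(Q)\setminus M')$ and subtracting the two representations (conversely such an $\omega'$, normalized so that $\sum_{v\in M'}\omega'_v=1$, yields the point $z=\sum_{v\in M'}\omega'_v q(v)$, so the two conditions are in fact equivalent).

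It then remains to prove the displayed non-emptiness, and this is where the real work lies. I would first note that $\relint(\conv(q(M')))\subseteq\intr(Q)$: if $x=\sum_{v\in M'}\beta_v q(v)$ with all $\beta_v>0$ lay on a proper face $Q\cap\{h=c\}$ (with $h\le c$ on $Q$), then $\sum_v\beta_v h(q(v))=c$ together with $h(q(v))\le c$ would force $h(q(v))=c$ for all $v\in M'$, so $q(M')$ would lie in that face and $M'$ would be a face of $\partial Q$---impossible. In particular the barycenter of $\conv(q(M'))$ lies in $\intr(Q)$. Now suppose the displayed intersection were empty and separate $\relint(\conv(q(M')))$ from the compact set $\conv(q(V(Q)\setminus M'))$ by a hyperplane $\{g=0\}$ with $g\ge 0$ on $\conv(q(M'))$ and $g\le 0$ on $\conv(q(V(Q)\setminus M'))$. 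If $g$ vanishes identically on $q(M')$ then $g\le 0$ on every vertex of $Q$, so $\{g=0\}$ supports $Q$ and its face contains $q(M')$, again making $M'$ a face---contradiction. Otherwise $g$ is nonconstant on $\conv(q(M'))$, and I would pass to the polytope $Q\cap\{g\ge 0\}$, in which $M'$ still plays the role of a missing face separated from the remaining vertices but the configuration is ``flatter'', and close the argument by induction on the dimension together with the number of vertices; the base case $\dim Q=2$ is immediate, since a missing edge of a polygon is a diagonal, there is a vertex on each of its two sides, and the segment joining those two vertices crosses the open diagonal.

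The routine parts are the reduction through Lemma \ref{lm:signs}, the affine-independence of $q(M')$, and the duality reformulation. The hard part will be the concluding geometric statement---specifically, setting up the induction so that a non-degenerate separating hyperplane always produces a strictly smaller polytope in which the missing face still separates from its complement; the degenerate case, where the separating hyperplane already supports $Q$, is exactly what contradicts missingness and is what makes the whole scheme go through.
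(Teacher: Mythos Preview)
Your overall strategy---reduce via Lemma \ref{lm:signs} to the case $i=1$ and then establish the convex--geometric statement that $\relint(\conv(q(M')))$ meets $\conv(q(V(Q)\setminus M'))$---is exactly the paper's approach. The reformulation, the affine independence of $q(M')$, and the containment $\relint(\conv(q(M')))\subset\intr(Q)$ are all correct. The paper simply asserts the geometric fact and moves on, so everything hinges on your final paragraph.

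That final induction does not work as written. Cutting to $Q\cap\{g\ge 0\}$ keeps the dimension the same (the cut hyperplane meets $\intr Q$), generally \emph{increases} the number of vertices (one per edge of $Q$ crossing $\{g=0\}$), and destroys simpliciality (the new facet $Q\cap\{g=0\}$ is an arbitrary slice), so ``$M'$ still plays the role of a missing face'' has no clear meaning and no induction parameter decreases. Moreover, since every boundary facet $\conv(q(M'\setminus v))$ is a genuine face of $Q$, one sees that $\conv(q(M'))$ and $\conv(q(V(Q)\setminus M'))$ are disjoint \emph{compact} sets, so you may take $g$ strictly separating; your ``$g\equiv 0$ on $q(M')$'' case then never occurs, and the induction is the entire argument.

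A short direct proof that does use the missing-face hypothesis: suppose $\ell>0$ on $q(M')$ and $\ell<0$ on $q(V(Q)\setminus M')$. For each $v\in M'$ the set $M'\setminus v$ is a face of the simplicial polytope $Q$, so there is an affine $\ell_v$ with $\ell_v=0$ on $q(M'\setminus v)$ and $\ell_v<0$ on all other vertices; normalize $\ell_v(q(v))=-1$. Pick $0<C<\min_{u\in M'}\ell(q(u))$ and set $c_v=C-\ell(q(v))<0$. Then $\ell':=\ell-\sum_{v\in M'}c_v\ell_v$ equals $C$ on every $q(u)$ with $u\in M'$ (only the $v=u$ term survives) and is negative on every $q(u)$ with $u\notin M'$. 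Thus $\{\ell'=C\}$ supports $Q$ and contains $q(M')$; since $Q$ is simplicial, $M'$ would be a face---contradiction. Replacing your inductive paragraph with this tilting argument completes the proof along the lines you intended.
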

    \begin{proof}
    If $i=1$, then $d-2i+1<d$, so $P$ is a nonsimplex polytope of dimension $d\geq 2$ and $F=\emptyset$. Since $M$ is a missing face, it follows that the intersection  $\conv(p(M))\cap \conv(V(P)\backslash p(M))$ is nonempty and that it is contained in the relative interior of $\conv(p(M))$. Thus, there exists $\lambda\in \Stress^a_1(\partial P,p)$ such that $\lambda_v>0$ if the vertex $v$ is in $M$ and $\lambda_v\leq 0$ if $v\notin M$. This completes the proof of the $i=1$ case.
    	
    We now prove the statement for $i>1$. Let $Q\coloneqq P/F$ be the quotient polytope and let $q$ be the natural embedding of $\partial Q$. Then $Q$ is a $(d-i+1)$-polytope and $M'\coloneqq M\backslash F$ is a missing face of $\partial Q$.
		Since all missing faces of $\partial P$ have dimension $\leq d-2i+1<d-i$, so do all missing faces of $\partial Q$. In particular, $Q$ is a nonsimplex polytope. Applying the first paragraph to the triple $(Q,M',\emptyset)$, we find an affine stress $\lambda'\in\Stress^a_1(\partial Q,q)=\Stress^a_1(\lk(F,\partial P),q)$ such that for every vertex $v$ of $\partial Q$, $\lambda'_v>0$ if $v\in M'$ and $\lambda'_v\leq 0$ otherwise. Lemma \ref{lm:signs} then guarantees the existence of a stress $\lambda\in\Stress^a_i(\partial P,p)$ such that for every $(i-1)$-face $F\cup v$ of $\partial P$, $\lambda_{F\cup v}>0$ if $v \in M'=M\backslash F$ and $\lambda_{F\cup v}\leq 0$ if $v\notin M$. This completes the proof.
		\end{proof}

	\section{$k$-stacked spheres}
To close the paper, we prove Conjecture \ref{conj: generalization of Kalai's conjecture}(2) for the case of $k$-stacked polytopes and spheres. 

Let $0\leq k\leq d$. An $\R$-homology $d$-ball $B$ is called {\em $k$-stacked} if all interior faces of $B$ are of dimension $\geq d-k$; in other words, $B$ is $k$-stacked if every face of $B$ of dimension $\leq d-k-1$ is a face of $\partial B$. A simplicial $(d-1)$-sphere $\Delta$ is {\em $k$-stacked} if there exists a $k$-stacked $\R$-homology $d$-ball $B$ such that $\Delta=\partial B$; in this case, $\skel_{d-k-1}(\Delta)=\skel_{d-k-1}(B)$. We say that a simplicial polytope $P$ is $k$-stacked if $\partial P$ is a $k$-stacked sphere. The significance of $k$-stacked spheres is explained by the Generalized Lower Bound Theorem: if $0\leq k\leq d/2-1$, then a simplicial $(d-1)$-sphere $\Delta$ satisfies $g_{k+1}(\Delta)=0$ if and only if $\Delta$ is $k$-stacked. This result for the boundary complexes of simplicial polytopes is due to Murai and Nevo \cite{MuraiNevo2013} (see also \cite{Adiprasito-toric}); the general case follows from Murai--Nevo's results and the $g$-theorem (see Theorem \ref{thm: Lefschetz}).

Murai and Nevo also proved that if  $0\leq k\leq d/2-1$ and $\Delta$ is a $k$-stacked $(d-1)$-sphere, then a $k$-stacked $\R$-homology $d$-ball whose boundary is equal to $\Delta$ is {\em unique}.  This ball is given by $$T(\Delta)\coloneqq \{F\subseteq V(\Delta): \skel_{d-k-1}(\overline{F})\subseteq \Delta\};$$ see \cite[Theorem 2.3]{MuraiNevo2013}. Furthermore, if $P$ is a $k$-stacked $d$-polytope then $T(\partial P)$ provides a {\em geometric triangulation} of $P$; see  \cite[Theorem 1.2]{MuraiNevo2013}. In particular, the $p$-images of vertices of any $d$-face of $T(\partial P)$ are affinely independent.

Assume that $0\leq k\leq d/2-1$ and that $\Delta$ is a $k$-stacked $(d-1)$-sphere with a $d$-embedding $p$. The complex $T(\Delta)$ is $d$-dimensional; hence, to talk about stress spaces of $T(\Delta)$, we need to specify a map $\tilde{p} : V(T(\Delta))=V(\Delta) \to \R^{d+1}$. We define such $\tilde{p}$ by $\tilde{p}(v)\coloneqq(p(v),1)$. The important thing to notice is that since $\skel_{d-k-1}(T(\Delta))=\skel_{d-k-1}(\Delta)$, it follows from the definition of $\tilde{p}$ that $\Stress^a_j(\Delta,p)=\Stress^\ell_j(T(\Delta),\tilde{p})$ for all $j\leq d/2$.

Another thing to notice is that by definition of $T(\Delta)$, $\mathcal{I}(T(\Delta))$ is precisely the set of missing faces of $\Delta$ of dimension $\geq d-k$ and 
$$ T(\Delta)=\bigcup_{F\in \mathcal{I}(T(\Delta))} \st(F, T(\Delta)) =\bigcup_{F\in \mathcal{I}(T(\Delta))}  \overline{F}*S_F, $$
where $S(F)=\lk(F,T(\Delta))$ is an $\R$-homology sphere of dimension $d-|F|\leq k-1$; in particular, $S_F\subseteq \Delta$.

We are now ready to prove the following case of Conjecture \ref{conj: generalization of Kalai's conjecture}. 
\begin{theorem} \label{thm: k-stacked polytopes}
Let $1\leq i\leq k\leq d/2-1$. Let $\Delta$ be a $k$-stacked $(d-1)$-sphere that has no missing faces of dimension $\geq d-i+1$. Let $p$ be a $d$-embedding of $\Delta$ such that the $p$-images of vertices of any $d$-face of $T(\Delta)$ are affinely independent. Then $\Stress^a_1(\Delta,p)=\Span\left\{\partial_{\mu} \omega: \omega\in \Stress^a_i(\Delta, p), \, \mu\in \M_{i-1}(V(\Delta))\right\}$, and so $\Stress^a_i(\Delta, p)$ determines $\Stress^a_1(\Delta, p)$. In particular, if $P$ is a $k$-stacked $d$-polytope, then the space of affine $i$-stresses of $P$ determines $P$ up to affine equivalence.
\end{theorem}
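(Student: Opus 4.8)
The plan is to transfer the statement into a statement about \emph{linear} stresses of the stacked ball $T(\Delta)$, and then apply the two partition-of-unity results already available. Throughout, I may assume $i\geq 2$ (for $i=1$ the asserted identity reads $\Stress^a_1(\Delta,p)=\Stress^a_1(\Delta,p)$ since $\M_0=\{1\}$) and that $\Delta$ has more than $d+1$ vertices (otherwise $\Delta$ is the boundary complex of a $d$-simplex, $\Stress^a_1(\Delta,p)=0$, and there is nothing to prove); in particular $T(\Delta)$ is then not a simplex and $\I(T(\Delta))\neq\emptyset$. Set $\tilde p(v)=(p(v),1)$ as in the discussion preceding the theorem, so that $\Stress^a_j(\Delta,p)=\Stress^\ell_j(T(\Delta),\tilde p)$ for every $j\leq d/2$; since $1,i\leq d/2$ it therefore suffices to prove
$$\Stress^\ell_1(T(\Delta),\tilde p)=\Span\big\{\partial_\mu\omega:\omega\in\Stress^\ell_i(T(\Delta),\tilde p),\ \mu\in\M_{i-1}(V(\Delta))\big\}.$$
The inclusion ``$\supseteq$'' is automatic, because each such $\partial_\mu\omega$ is homogeneous of degree $1$, is annihilated by the same linear forms as $\omega$, and is supported on $\st(\supp\mu,T(\Delta))$, hence lies in $\Stress^\ell_1(T(\Delta),\tilde p)$. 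Note also that, by the theorem's hypothesis, the $\tilde p$-images of the vertices of every facet ($=d$-face) of $T(\Delta)$ are linearly independent.

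First I would apply the linear partition of unity on antistars, Remark~\ref{rm: linear} (with the evident shift of dimensions: $T(\Delta)$ is a normal $d$-pseudomanifold with boundary and $\tilde p$ is a $(d+1)$-embedding with linearly independent facet images), to obtain $\Stress^\ell_1(T(\Delta),\tilde p)=\sum_{F\in\I(T(\Delta))}\Stress^\ell_1\big(\st(F,T(\Delta)),\tilde p\big)$. Recall that $\I(T(\Delta))$ is precisely the set of missing faces of $\Delta$ of dimension $\geq d-k$, that $\st(F,T(\Delta))=\overline F*S_F$ with $S_F=\lk(F,T(\Delta))$ an $\R$-homology sphere of dimension $d-|F|$, and that the join $\overline F$ is a $(|F|-1)$-simplex. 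The one numerical input I need is $i\leq d-|F|+1$ for every $F\in\I(T(\Delta))$: since such an $F$ is a missing face of $\Delta$ and $\Delta$ has no missing faces of dimension $\geq d-i+1$, we get $\dim F\leq d-i$, i.e. $|F|\leq d-i+1$.

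Next, for each fixed $F\in\I(T(\Delta))$ I would apply Corollary~\ref{cor:simplex-join-sphere} to $\Gamma=\overline F*S_F$ (the join of the $(|F|-1)$-simplex $\overline F$ with the homology $(d-|F|)$-sphere $S_F$), equipped with the $(d+1)$-embedding $\tilde p$; the linear-independence hypothesis on facets holds as noted, and the constraint ``$1<i\leq d-|F|+1$'' of that corollary is exactly the numerical point just established. This yields
$$\Stress^\ell_1(\overline F*S_F,\tilde p)=\Span\big\{\partial_\mu\omega:\omega\in\Stress^\ell_i(\overline F*S_F,\tilde p),\ \mu\in\M_{i-1}(V(S_F))\big\}.$$
Because $\overline F*S_F=\st(F,T(\Delta))$ is a subcomplex of $T(\Delta)$, we have $\Stress^\ell_i(\overline F*S_F,\tilde p)\subseteq\Stress^\ell_i(T(\Delta),\tilde p)$, and clearly $\M_{i-1}(V(S_F))\subseteq\M_{i-1}(V(\Delta))$. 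Summing the last displayed equality over $F\in\I(T(\Delta))$ and combining with the antistar partition of unity from the previous step gives ``$\subseteq$'' in the first display, hence equality. Translating back through $\Stress^a_j(\Delta,p)=\Stress^\ell_j(T(\Delta),\tilde p)$ for $j=1,i$ proves the first assertion. The space $\Stress^a_1(\Delta,p)$ is the space of affine dependences of the multiset $p(V(\Delta))$ (which affinely spans $\R^d$), so it determines that multiset, and hence $p$, up to an invertible affine transformation. For the ``in particular'' clause one takes $\Delta=\partial P$: if $P$ is a $k$-stacked $d$-polytope whose boundary complex has no missing faces of dimension $\geq d-i+1$, then $T(\partial P)$ is a geometric triangulation of $P$ by the Murai--Nevo theorem recalled above, so the natural embedding $p$ automatically meets the hypothesis that the $p$-images of the vertices of each $d$-face of $T(\partial P)$ are affinely independent; the first part then recovers $\Stress^a_1(\partial P,p)$, and with it $P$ up to affine equivalence.

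I do not expect any serious obstacle: there is no new idea beyond assembling Remark~\ref{rm: linear} and Corollary~\ref{cor:simplex-join-sphere} along the decomposition $T(\Delta)=\bigcup_{F\in\I(T(\Delta))}\overline F*S_F$. The only place where the missing-face hypothesis is genuinely used — and where the argument would break without it — is the inequality $i\leq d-|F|+1$ for minimal interior faces $F$ of $T(\Delta)$; this is precisely what ensures that each block $\overline F*S_F$ carries enough linear $i$-stress to generate its linear $1$-stress by repeated differentiation. The remaining points are bookkeeping: matching the dimension shift when invoking Remark~\ref{rm: linear} for the $d$-dimensional ball $T(\Delta)$, and observing that the identification $\Stress^a_j(\Delta,p)=\Stress^\ell_j(T(\Delta),\tilde p)$ is compatible with the operators $\partial_\mu$ because both spaces sit inside $\R[x_v:v\in V(\Delta)]$ and the differential operators are literally the same.
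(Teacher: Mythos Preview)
Your proposal is correct and follows essentially the same route as the paper: transfer to linear stresses of $T(\Delta)$ via $\tilde p$, use Remark~\ref{rm: linear} to decompose $\Stress^\ell_1(T(\Delta),\tilde p)$ over minimal interior faces, and then apply Corollary~\ref{cor:simplex-join-sphere} on each block $\overline{F}*S_F$, with the missing-face hypothesis giving the needed bound $i\le d-|F|+1$. Your write-up is slightly more careful about edge cases ($i=1$, the simplex case) and about the trivial inclusion $\supseteq$, but the argument is the same.
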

\proof
Since $\Stress^a_j(\Delta,p)=\Stress^\ell_j(T(\Delta),\tilde{p})$ for all $j\leq d/2$, it suffices to show that  $\Stress^\ell_i(T(\Delta), \tilde{p})$ determines $\Stress^\ell_1(T(\Delta), \tilde{p})$. Also, since  the $p$-images of vertices of any $d$-face of $T(\Delta)$ are affinely independent, the $\tilde{p}$-images of these vertices are linearly independent. Finally, by our assumptions, the dimension of each $F$ appearing in the decomposition $T(\Delta)=\bigcup_{F\in I(T(\Delta))}  \overline{F}*S_F$ is $\leq d-i$,  and so each homology sphere $S_F$ in this decomposition has dimension $\geq i-1$. Thus Remark \ref{rm: linear} applies to $T(\Delta)$ while Corollary \ref{cor:simplex-join-sphere} applies to each $\overline{F}*S_F$. We obtain that
\begin{eqnarray*}
&&\Span\left\{\partial_{\mu} \omega: \omega\in \Stress^\ell_i(T(\Delta),\tilde{p}), \, \mu\in \mathcal{M}_{i-1}(V(\Delta))\right\} \\
 &\supseteq&
 \sum_{F\in \mathcal{I}(T(\Delta))} \Span\left\{\partial_{x_G} \omega: \omega\in \Stress^\ell_i(\overline{F}*S_F,\tilde{p}), \,G\in S_F, \, |G|=i-1\right\} \\
&\stackrel{(\ast)}{=}& \sum_{F\in \mathcal{I}(T(\Delta))} \Stress^\ell_1\left(\overline{F}*S_F,\tilde{p}\right) \\
& = & \sum_{F\in \mathcal{I}(T(\Delta))} \Stress^\ell_1\left(\st(F, T(\Delta)),\tilde{p}\right)  \stackrel{(\dag)}{=} \Stress^\ell_1 \left(T(\Delta),\tilde{p}\right).
\end{eqnarray*} 
Here $(\ast)$ is by Corollary \ref{cor:simplex-join-sphere} and $(\dag)$ is by Remark \ref{rm: linear}. The result follows.
\endproof

\subsection*{Acknowledgments} We are grateful to Satoshi Murai for his interest in and comments on the paper and to Gil Kalai for sharing with us his new reconstruction conjectures related to non-simplicial polytopes. We also thank Karim Adiprasito anf Geva Yashfe for inspiring conversations, and the anonymous referee for helpful suggestions on improving the presentation.

\subsection*{Data availability} Data sharing not applicable to this article as no datasets were generated or analysed during the current study.

	{\small
		\bibliography{refs}

\begin{thebibliography}{10}

\bibitem{Adiprasito-toric}
K.~Adiprasito.
\newblock Toric chordality.
\newblock {\em J. Math. Pures Appl. (9)}, 108(5):783--807, 2017.

\bibitem{Adiprasito-g-conjecture}
K.~Adiprasito.
\newblock Combinatorial {L}efschetz theorems beyond positivity.
\newblock arXiv:1812.10454v4, 2018.

\bibitem{ANS}
K.~Adiprasito, E.~Nevo, and J.~A. Samper.
\newblock A geometric lower bound theorem.
\newblock {\em Geom. Funct. Anal.}, 26(2):359--378, 2016.

\bibitem{AdiprasitoPapadakisPetrotou}
K.~Adiprasito, S.~A. Papadakis, and V.~Petrotou.
\newblock Anisotropy, biased pairings, and the {L}efschetz property for
  pseudomanifolds and cycles.
\newblock arXiv:2101.07245v2, 2021.

\bibitem{AdiprasitoYashfe}
K.~Adiprasito and G.~Yashfe.
\newblock The partition complex: an invitation to combinatorial commutative
  algebra.
\newblock In {\em Surveys in combinatorics 2021}, volume 470 of {\em London
  Math. Soc. Lecture Note Ser.}, pages 1--41. Cambridge Univ. Press, Cambridge,
  2021.

\bibitem{AsimowRothI}
L.~Asimow and B.~Roth.
\newblock The rigidity of graphs.
\newblock {\em Trans. Amer. Math. Soc.}, 245:279--289, 1978.

\bibitem{AsimowRothII}
L.~Asimow and B.~Roth.
\newblock The rigidity of graphs. {II}.
\newblock {\em J. Math. Anal. Appl.}, 68(1):171--190, 1979.

\bibitem{Athan}
C.~A. Athanasiadis.
\newblock Some combinatorial properties of flag simplicial pseudomanifolds and
  spheres.
\newblock {\em Ark. Mat.}, 49(1):17--29, 2011.

\bibitem{BD14}
B.~Bagchi and B.~Datta.
\newblock On stellated spheres and a tightness criterion for combinatorial
  manifolds.
\newblock {\em European J. Combin.}, 36:294--313, 2014.

\bibitem{Barnette-LBT-pseudomanifolds}
D.~Barnette.
\newblock Graph theorems for manifolds.
\newblock {\em Israel J. Math.}, 16:62--72, 1973.

\bibitem{Barnette73}
D.~Barnette.
\newblock A proof of the lower bound conjecture for convex polytopes.
\newblock {\em Pacific J.~Math.}, 46:349--354, 1973.

\bibitem{BilleraLee}
L.~J. Billera and C.~W. Lee.
\newblock A proof of the sufficiency of {M}c{M}ullen's conditions for
  $f$-vectors of simplicial convex polytopes.
\newblock {\em J.~Combin.~Theory, Ser. A}, 31:237--255, 1981.

\bibitem{CJT}
J.~Cruickshank, B.~Jackson, and S.~Tanigawa.
\newblock Global rigidity of triangulated manifolds.
\newblock arXiv:2204.02503, 2022.

\bibitem{Flemingkaru}
B.~Fleming and K.~Karu.
\newblock {H}ard {L}efschetz theorem for simple polytopes.
\newblock {\em J. Algebraic Combin.}, 32(2):227--239, 2009.

\bibitem{Fogelsanger88}
A.~Fogelsanger.
\newblock {\em The generic rigidity of minimal cycles}.
\newblock PhD thesis, Cornell University, 1988.

\bibitem{Kalai87}
G.~Kalai.
\newblock Rigidity and the lower bound theorem.{ I}.
\newblock {\em Invent.~Math.}, 88:125--151, 1987.

\bibitem{Kalai-survey}
G.~Kalai.
\newblock Some aspects of the combinatorial theory of convex polytopes.
\newblock In {\em Polytopes: abstract, convex and computational ({S}carborough,
  {ON}, 1993)}, volume 440 of {\em NATO Adv. Sci. Inst. Ser. C Math. Phys.
  Sci.}, pages 205--229. Kluwer Acad. Publ., Dordrecht, 1994.

\bibitem{KaruXiao}
K.~Karu and E.~Xiao.
\newblock On the anisotropy theorem of {P}apadakis and {P}etrotou.
\newblock {\em Algebr. Comb.}, 6(5):1313--1330, 2023.

\bibitem{KNNZ}
S.~Klee, E.~Nevo, I.~Novik, and H.~Zheng.
\newblock A lower bound theorem for centrally symmetric simplicial polytopes.
\newblock {\em Discrete Comput. Geom.}, 61:541--561, 2019.

\bibitem{Lee94}
C.~W. Lee.
\newblock Generalized stress and motions.
\newblock In {\em Polytopes: abstract, convex and computational ({S}carborough,
  {ON}, 1993)}, volume 440 of {\em NATO Adv. Sci. Inst. Ser. C Math. Phys.
  Sci.}, pages 249--271. Kluwer Acad. Publ., Dordrecht, 1994.

\bibitem{Lee96}
C.~W. Lee.
\newblock P.{L}.-spheres, convex polytopes, and stress.
\newblock {\em Discrete Comput. Geom.}, 15(4):389--421, 1996.

\bibitem{Lee-notes}
C.~W. Lee.
\newblock The $g$-theorem.
\newblock http://www.ms.uky.edu/$\sim$lee/ma715sp02/notes.pdf, 2002.

\bibitem{McMullen70}
P.~McMullen.
\newblock The maximum numbers of faces of a convex polytope.
\newblock {\em Mathematika}, 17:179--184, 1970.

\bibitem{McMullen93}
P.~McMullen.
\newblock On simple polytopes.
\newblock {\em Invent. Math.}, 113(2):419--444, 1993.

\bibitem{McMullen96}
P.~McMullen.
\newblock Weights on polytopes.
\newblock {\em Discrete Comput. Geom.}, 15(4):363--388, 1996.

\bibitem{Mesh}
R.~Meshulam.
\newblock Domination numbers and homology.
\newblock {\em J. Combin. Theory Ser. A}, 102(2):321--330, 2003.

\bibitem{Murai-15}
S.~Murai.
\newblock Tight combinatorial manifolds and graded {B}etti numbers.
\newblock {\em Collect. Math.}, 66(3):367--386, 2015.

\bibitem{MuraiNevo2013}
S.~Murai and E.~Nevo.
\newblock On the generalized lower bound conjecture for polytopes and spheres.
\newblock {\em Acta Math.}, 210(1):185--202, 2013.

\bibitem{NevoNovinsky}
E.~Nevo and E.~Novinsky.
\newblock A characterization of simplicial polytopes with $g_2 = 1$.
\newblock {\em J. Combin. Theory Ser. A}, 118(2):387--395, 2011.

\bibitem{NZ-cs-stresses}
I.~Novik and H.~Zheng.
\newblock The stresses on centrally symmetric complexes and the lower bound
  theorems.
\newblock {\em Algebr. Comb.}, 4(3):541--549, 2021.

\bibitem{N-Z22}
I.~Novik and H.~Zheng.
\newblock Reconstructing simplicial polytopes from their graphs and affine
  2-stresses.
\newblock {\em Israel J. Math.}, 255(2):891--910, 2023.

\bibitem{Pachner}
U.~Pachner.
\newblock P.{L.} homeomorphic manifolds are equivalent by elementary shellings.
\newblock {\em European J. Combin.}, 12(2):129--145, 1991.

\bibitem{PapadakisPetrotou}
S.~A. Papadakis and V.~Petrotou.
\newblock The characteristic 2 anisotropicity of simplicial spheres.
\newblock arXiv:2012.09815, 2020.

\bibitem{Stanley80}
R.~P. Stanley.
\newblock The number of faces of a simplicial convex polytope.
\newblock {\em Adv.~Math.}, 35:236--238, 1980.

\bibitem{Stanley96}
R.~P. Stanley.
\newblock {\em Combinatorics and Commutative Algebra}.
\newblock Progress in Mathematics. Birkh{\"a}user, Boston, Inc., Boston, MA,
  1996.
\newblock Second edition.

\bibitem{Swartz2006}
Ed~Swartz.
\newblock {$g$}-elements, finite buildings and higher {C}ohen-{M}acaulay
  connectivity.
\newblock {\em J. Combin. Theory Ser. A}, 113(7):1305--1320, 2006.

\bibitem{Tay-et-al-I}
T.-S. Tay, N.~White, and W.~Whiteley.
\newblock Skeletal rigidity of simplicial complexes. {I}.
\newblock {\em European J. Combin.}, 16(4):381--403, 1995.

\bibitem{Tay-et-al}
T.-S. Tay, N.~White, and W.~Whiteley.
\newblock Skeletal rigidity of simplicial complexes. {II}.
\newblock {\em European J. Combin.}, 16:503--523, 1995.

\bibitem{Whiteley-84}
W.~Whiteley.
\newblock Infinitesimally rigid polyhedra. {I}. {S}tatics of frameworks.
\newblock {\em Trans. Amer. Math. Soc.}, 285(2):431--465, 1984.

\bibitem{Z-flagsurvey}
H.~Zheng.
\newblock Face enumeration on flag complexes and flag spheres.
\newblock In T.~Hibi and A.~Tsuchiya, editors, {\em Algebraic and Geometric
  Combinatorics on Lattice Polytopes}, pages 435--449. Singapore Hackensack, NJ
  World Scientific, 2019.

\bibitem{Z-rigidity}
H.~Zheng.
\newblock The rigidity of the graphs of homology spheres minus one edge.
\newblock {\em Discrete Mathematics}, 343:112135, 2020.

\bibitem{Ziegler}
G.~M. Ziegler.
\newblock {\em Lectures on polytopes}, volume 152 of {\em Graduate Texts in
  Mathematics}.
\newblock Springer-Verlag, New York, 1995.

\end{thebibliography}
		\bibliographystyle{plain}
	}

	\appendix
	\section{Proof of Conjecture  \ref{conj: general partition of unity}}
	The goal of this Appendix is to sketch the proof of Conjecture \ref{conj: general partition of unity}.
	\begin{theorem}\label{thm: general partition of unity}
		Let $(\Delta,p)$ be either the boundary complex of a simplicial $d$-polytope with its natural embedding $p$, or a $\Z/2\Z$-homology $(d-1)$-sphere with a generic embedding $p$, and let $i$ be a natural number such that $i\leq \lfloor (d-1)/2\rfloor$. Then $\Stress^a_i(\Delta,p)=\sum_{v\in V(\Delta)} \Stress^a_i(\st(v), p)$. 
	\end{theorem}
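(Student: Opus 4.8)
The plan is to adapt the spectral-sequence argument behind \cite[Theorem 33]{AdiprasitoYashfe}. Write $c=\sum_{v\in V(\Delta)}x_v$, and recall from Section 3 that for any subcomplex $\Gamma\subseteq\Delta$ there is an inclusion $\Stress^\ell_k(\Gamma,p)\subseteq\Stress^\ell_k(\Delta,p)$ and that $\Stress^a_k(\Gamma,p)=\ker\!\big(\partial_c\colon\Stress^\ell_k(\Gamma,p)\to\Stress^\ell_{k-1}(\Gamma,p)\big)$. The inclusion $\sum_{v}\Stress^a_i(\st(v),p)\subseteq\Stress^a_i(\Delta,p)$ is trivial, so the content is surjectivity of the map $\epsilon\colon\bigoplus_{v}\Stress^a_i(\st(v),p)\to\Stress^a_i(\Delta,p)$. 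The naive attempt — write $\lambda\in\Stress^a_i(\Delta,p)$ as $\sum_v\lambda_v$ with $\lambda_v\in\Stress^\ell_i(\st(v),p)$ by Theorem \ref{lm: partition of unity}(1) and then correct each $\lambda_v$ so that $\partial_c\lambda_v=0$ — does not close on its own, because the obstruction to such a correction is governed by the whole \v{C}ech complex of the cover of $\Delta$ by its vertex stars, not only by its bottom term. A double complex encodes this obstruction.

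For a nonempty $W\subseteq V(\Delta)$ put $\Sigma_W\coloneqq\bigcap_{v\in W}\st(v,\Delta)$, a subcomplex of $\Delta$; thus $\Sigma_{\{v\}}=\st(v)$, $\Sigma_{W}\subseteq\Sigma_{W'}$ whenever $W'\subseteq W$, and $\bigcup_{v}\st(v)=\Delta$. Consider the bounded double complex
$$E_0^{s,t}\;=\;\bigoplus_{\substack{W\subseteq V(\Delta)\\ |W|=s+1}}\Stress^\ell_{i-t}(\Sigma_W,p)\qquad(s,t\ge 0),$$
whose horizontal differential is the alternating-sum \v{C}ech differential of the cover $\{\st(v)\}_{v\in V(\Delta)}$ (well defined because the corestriction maps $\Stress^\ell_\bullet(\Sigma_W,p)\hookrightarrow\Stress^\ell_\bullet(\Sigma_{W'},p)$ are inclusions) and whose vertical differential is $\partial_c$, commuting with all of these inclusions. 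Computing the \v{C}ech homology of each row first: provided the augmented \v{C}ech complex
$$\cdots\to\bigoplus_{|W|=2}\Stress^\ell_{j}(\Sigma_W,p)\to\bigoplus_{v}\Stress^\ell_{j}(\st(v),p)\to\Stress^\ell_{j}(\Delta,p)\to 0$$
is exact for every $j\le i$, the result is the single column $\big(\Stress^\ell_{i-\bullet}(\Delta,p),\partial_c\big)$ at $s=0$, and since each $\partial_c$ there is surjective by the hard Lefschetz property (Theorem \ref{thm: Lefschetz}), the total homology of $E_0$ equals $\Stress^a_i(\Delta,p)$, concentrated in one total degree. Computing the $\partial_c$-homology of each column first: provided $\partial_c\colon\Stress^\ell_k(\Sigma_W,p)\to\Stress^\ell_{k-1}(\Sigma_W,p)$ is surjective for all $k\le i$ and all $W$ with $\Sigma_W\neq\emptyset$, the result is the single row $\big(\bigoplus_{|W|=s+1}\Stress^a_i(\Sigma_W,p)\big)_{s\ge0}$ at $t=0$ with the \v{C}ech differential, whose homology therefore also computes the total homology. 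Comparing the two computations forces the \v{C}ech complex of affine $i$-stresses of the $\Sigma_W$ to be exact away from $s=0$, with homology there equal to $\operatorname{coker}\!\big(\bigoplus_{|W|=2}\Stress^a_i(\Sigma_W,p)\to\bigoplus_{v}\Stress^a_i(\st(v),p)\big)\cong\Stress^a_i(\Delta,p)$ via $\epsilon$; in particular $\epsilon$ is onto, which is the asserted partition of unity.

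Two families of hypotheses feed into this. The Lefschetz inputs for $\Delta$ and for the vertex stars are available: for $\Delta$ itself it is Theorem \ref{thm: Lefschetz}, while for $\st(v)$ the map $\partial_c$ is onto in each degree $k\le i$ because $\lk(v)$ is a $(d-2)$-sphere, so by the cone lemma \cite{Lee96} and Theorem \ref{thm: Lefschetz} one gets $\operatorname{rank}\partial_c=h_k(\lk v)-g_k(\lk v)=h_{k-1}(\lk v)=\dim\Stress^\ell_{k-1}(\st v)$. For the intersections $\Sigma_W$ with $|W|\ge 2$ one must verify that each nonempty $\Sigma_W$ is Cohen--Macaulay and admits a Lefschetz element through degree $i$; here the hypothesis $i\le\lfloor(d-1)/2\rfloor$ is exactly what keeps all the relevant links spheres of dimension $\ge 2i-1$, so that Theorem \ref{thm: Lefschetz} (through the cone lemma) still applies — this is why the bound is $\lfloor(d-1)/2\rfloor$ rather than $\lfloor d/2\rfloor$. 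The one genuinely new ingredient is the exactness, in every degree $j\le i$, of the augmented linear \v{C}ech complex displayed above: its exactness at the last two terms is Theorem \ref{lm: partition of unity}(1), but its exactness further up — that the vertex-star cover actually \emph{resolves} $\Stress^\ell_j(\Delta,p)$ rather than merely covering it — is the analogue of \cite[Theorem 33]{AdiprasitoYashfe} and is obtained by induction on $\dim\Delta$, using that links of faces of polytope boundaries and homology spheres are again of the required type and that intersecting vertex stars corresponds to passing to vertex stars in links. This resolution statement is the main obstacle, and carrying it out in full is where the bookkeeping of the spectral-sequence method of \cite{AdiprasitoYashfe} is indispensable.
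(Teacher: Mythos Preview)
Your outline correctly aims at a double-complex argument in the spirit of \cite{AdiprasitoYashfe}, but the specific double complex you set up has gaps that the paper's choice of complex is designed to avoid.

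The main problem is the column analysis. You need $\partial_c\colon\Stress^\ell_k(\Sigma_W,p)\to\Stress^\ell_{k-1}(\Sigma_W,p)$ to be surjective for every $k\le i$ and every nonempty $\Sigma_W$, but your justification (``the hypothesis $i\le\lfloor(d-1)/2\rfloor$ keeps all the relevant links spheres of dimension $\ge 2i-1$'') only covers $|W|=1$. When $W$ is a face of size $s+1\ge 2$, the link $\lk(W,\Delta)$ is a $(d-s-2)$-sphere and Lefschetz yields surjectivity only up to degree $\lceil(d-s-1)/2\rceil$, which drops below $i$ as soon as $s\ge 2$ in the critical case $d=2i+1$. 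Worse, when $W\subseteq V(\Delta)$ is \emph{not} a face, the intersection $\Sigma_W=\bigcap_{v\in W}\st(v)$ need not be a cone over a sphere, nor even Cohen--Macaulay, so neither the cone lemma nor Theorem~\ref{thm: Lefschetz} applies at all. Without column exactness the vertical spectral sequence does not collapse to the single row $t=0$, and your comparison breaks. (Your row-exactness claim---that the \v{C}ech complex of linear stresses is a full resolution, not merely surjective at the end---is likewise unproven; appealing to induction on $\dim\Delta$ does not close it, precisely because the higher intersections $\Sigma_W$ for non-face $W$ are not stars in any link.)

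The paper's proof uses a different double complex that sidesteps both issues. In place of the \v{C}ech cover it uses the \emph{partition complex} $P^*$ indexed by \emph{faces} $\tau\in\Delta$, with $P^j=\bigoplus_{\tau\in\Delta^{(j)}}\R[\st(\tau)]$; every term is the Stanley--Reisner module of a genuine star, and $P^*$ is known to be exact in positive degrees by \cite[Proposition~26]{AdiprasitoYashfe}. In place of the single differential $\partial_c$ it tensors with the \emph{full} augmented Koszul complex $\tilde K^*(\Theta)$ on all $d+1$ linear forms $\theta_1,\ldots,\theta_{d+1}$. The key point---and this is the new idea you are missing---is that for each $\R[\st(\tau)]$ the Koszul complex on $d+1$ forms has at most one nonvanishing cohomology, namely $H^d$, and even that one vanishes in graded degree $j\le\lceil(d-|\tau|)/2\rceil$. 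This is a far stronger acyclicity than what the single map $\partial_c$ can give, and it is exactly what makes the bookkeeping at total degrees $d$ and $d+1$ close up to yield injectivity of $\big(\R[\Delta]/(\Theta)\big)_i\to\bigoplus_v\big(\R[\st(v)]/\Theta\R[\st(v)]\big)_i$.
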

	
	The proof uses the (dual) language of the Stanley--Reisner rings. Specifically, we denote by $\R[\Delta]$ the Stanley--Reisner ring of $\Delta$ and, for a face $\tau$ of $\Delta$, we denote by $\R[\st (\tau)]$ the Stanley--Reisner ring of the star of $\tau$ in $\Delta$, considered as an $\R[\Delta]$-module. As in Section 3.1, we let $\Theta=\Theta(p)$ be the collection of $d+1$ linear forms $(\theta_1,\ldots,\theta_d,\theta_{d+1})$ in $\R[X]$ determined by $p$. In particular, $\theta_{d+1}=\sum_{v\in V} x_v$.  Finally, for a graded $\R[\Delta]$-module $M$ and any integer $j$, we denote by $M_j$ the $j$-th graded component of $M$. Some computations below rely on a simple observation that $\R[\st (\tau)]_j=0$ for all $j<0$. 
	
	The statement that $\Stress^a_i(\Delta,p)=\sum_{v\in V(\Delta)} \Stress^a_i(\st(v), p)$ is  easily seen to be equivalent to the statement that the map $\big(\R[\Delta]/(\Theta)\big)_i \to \sum_{v\in V(\Delta)} \big(\R[\st(v)]/\Theta\R[\st(v)]\big)_i$, induced by natural surjections $\R[\Delta] \to \R[\st(v)]$, is injective. To establish this result we adapt the proof of Theorem 33 from \cite{AdiprasitoYashfe}.  The key new idea is to look at the Koszul complex $K^*(\Theta)$ w.r.t.~all $d+1$ elements $(\theta_1,\ldots,\theta_d, \theta_{d+1})$ of $\Theta(p)$ rather than just w.r.t.~the first $d$ elements. Our assumptions on $(\Delta,p)$ along with the $g$-theorem (see \cite{Stanley80,McMullen96} for the case of polytopes and \cite[Theorem 1.3]{KaruXiao} for the case of spheres) imply that $\R[\Delta]$ is a Cohen--Macaulay ring of Krull dimension $d$, that the sequence $\theta_1,\ldots,\theta_d$ is a regular sequence on $\R[\Delta]$, and that the map $\cdot \theta_{d+1}: \big(\R[\Delta]/(\theta_1,\ldots,\theta_d)\big)_{j-1} \to  \big(\R[\Delta]/(\theta_1,\ldots,\theta_d)\big)_{j}$ is injective for all  $j\leq \lceil d/2\rceil$.  Similar statements apply to $\R[\st(\tau)]$ for any face $\tau$ of $\Delta$. Specifically,  $\theta_1,\ldots,\theta_d$ is a regular sequence on $\R[\st (\tau)]$  and  
	\[\cdot \theta_{d+1}: \big(\R[\st (\tau)]/(\theta_1,\ldots,\theta_d)\R[\st (\tau)]\big)_{j-1} \to  \big(\R[\st (\tau)]/(\theta_1,\ldots,\theta_d)\R[\st (\tau)]\big)_{j} \]
is injective for all $j\leq \lceil (d-|\tau|)/2\rceil$.

The above paragraph and standard results about Koszul complexes (such as Theorem 21 in \cite{AdiprasitoYashfe}) imply that for any face $\tau$ and any integer $j$, the following complex of vector spaces over $\R$
\begin{eqnarray*} 0 \to \R[\st(\tau)]_{j-d-1} \otimes K^0(\Theta) &\stackrel{\partial^0}{\to}& \R[\st(\tau)]_{j-d} \otimes K^1(\Theta) \stackrel{\partial^1}{\to}\cdots \\
\to\R[\st(\tau)]_{j-1} \otimes K^d(\Theta) &\stackrel{\partial^d}{\to}& \R[\st(\tau)]_{j}  \otimes K^{d+1}(\Theta) 
 \to \big(\R[\st (\tau)]/\Theta\R[\st (\tau)]\big)_j \to 0
\end{eqnarray*}
is almost exact, namely, 
\begin{enumerate}
\item[(*)] all cohomologies of this complex, except possibly for $H^d$, vanish, and
\item[(**)]  if $j\leq \lceil (d-|\tau|)/2\rceil$, then $H^d$ also vanishes.
\end{enumerate}
 
We now proceed as in the proof of \cite[Theorem 33]{AdiprasitoYashfe}. Let $\Delta^{(j)}$ denote the set of $j$-faces of $\Delta$. (In particular, $\Delta^{(0)}=V(\Delta)$.) Let $P^*=P^*(\Delta)$ be the partition complex
\[ 0\to \R[\Delta]\to \bigoplus_{v\in\Delta^{(0)}} \R[\st(v)] \to \cdots \to \bigoplus_{\sigma\in \Delta^{(d-1)}} \R[\st(\sigma)] \to 0\]
with indexing such that $P^{-1}=\R[\Delta]$ and $P^j=\bigoplus_{\tau\in \Delta^{(j)}} \R[\st(\tau)]$ for $j\geq 0$.
Let $\tilde{K}^*(\Theta)$ be the augmented Koszul complex w.r.t.~$\Theta=\Theta(p)$, i.e., the complex
\[ K^0(\Theta) \to \cdots \to K^{d+1}(\Theta) \to \tilde{K}^{d+2}(\Theta)\coloneqq\R[\Delta]/(\Theta) \to 0.\]
Finally, let $C^{*,*}$ be the double complex $P^*\otimes\tilde{K}^*(\Theta)$ endowed with the grading defined in \cite[Section 5.1.1]{AdiprasitoYashfe}.

We fix $i\leq \lfloor (d-1)/2\rfloor=\lceil(d-2)/2\rceil$ and consider the $i$-th graded piece of $C^{*,*}$, $C^{*,*}_{\, i}$. The outline of the proof is as follows. Properties (*) and (**) above along with \cite[Lemma 8]{AdiprasitoYashfe} applied in the vertical direction of $C^{*,*}_{\, i}$ imply that $H^d\big(\Tot\big(C^{*,*}_{\, i}\big)^*\big)=H^{d+1}\big(\Tot\big(C^{*,*}_{\, i}\big)^*\big)=0$. This in turn implies that $H^d\big(\Tot\big(C^{*,*\leq d+1}_{\, i}\big)^*\big)$ is isomorphic to $H^{d+1}\big(C^{*-d-2,d+2}_{\, i}\big)$. It then follows that the kernel of the map $\big(\R[\Delta]/(\Theta)\big)_i \to \sum_{v\in V(\Delta)} \big(\R[\st(v)]/\Theta\R[\st(v)]\big)_i$ is isomorphic to  $H^d\big(\Tot\big(C^{*,*\leq d+1}_{\, i}\big)^*\big)$. Now, by \cite[Proposition 26]{AdiprasitoYashfe}, $P^*_t$ is exact for all $t\neq 0$ and since $\Delta$ is Cohen--Macaulay, the only nontrivial cohomology that $P^*_0$ has is $H^{d-1}$. Finally, applying \cite[Lemma 8]{AdiprasitoYashfe} in the horizontal direction of $C^{*,*\leq d+1}_{\, i}$, we conclude that $H^d\big(\Tot\big(C^{*,*\leq d+1}_i\big)^*\big)=0$, and so the map $\big(\R[\Delta]/(\Theta)\big)_i \to \sum_{v\in V(\Delta)} \big(\R[\st(v)]/\Theta\R[\st(v)]\big)_i$ is injective. This completes the proof.
\endproof

\end{document}